\documentclass[10pt]{amsart}

\usepackage{amsthm, amsfonts, amssymb, color}
 \usepackage{mathrsfs}
\usepackage{amsmath}
 \usepackage{amstext, amsxtra}
  \usepackage{txfonts}
 \usepackage[colorlinks, linkcolor=black, citecolor=blue, pagebackref, hypertexnames=false]{hyperref}
\usepackage{graphicx}
\usepackage{overpic}

 \allowdisplaybreaks
\setlength\textheight{44cc} \setlength\textwidth{30cc}
\setlength\topmargin{0in} \setlength\parskip{5pt}

\widowpenalty=10000

\newtheorem{theorem}{Theorem}[section]
\newtheorem{proposition}[theorem]{Proposition}
\newtheorem{coro}[theorem]{Corollary}
\newtheorem{lemma}[theorem]{Lemma}
\newtheorem{definition}[theorem]{Definition}

\newtheorem{remark}[theorem]{Remark}

\renewcommand\Re{\operatorname{Re}}
\renewcommand\Im{\operatorname{Im}}

\title[Scattering and blow-up criteria for INLS with a potential ]{
Scattering and blow-up criteria for  3D cubic focusing nonlinear inhomogeneous NLS with a potential }
\author{Qing Guo, \ Hua Wang \ and Xiaohua Yao}

\address {Qing Guo, College of Science,
 Minzu University of China, Beijing, 100081, P.R. China}
\email{guoqing0117@163.com}
\address{ Hua Wang, School of Mathematics and Statistics and Hubei Province Key Laboratory of Mathematical Physics,
Central China Normal University, Wuhan, 430079, P.R. China}
\email{wanghua\_math@126.com}
\address{Xiaohua Yao, School of Mathematics and Statistics and  Hubei Province Key Laboratory of Mathematical Physics,
 Central China Normal University, Wuhan, 430079, P.R. China}
\email{yaoxiaohua@mail.ccnu.edu.cn }
\date{\today}
\subjclass[2000]{ 35B20; 35P25; 35Q55; 47J35}
\keywords{Inhomogeneous NLS; Focusing; Potential; Scattering; Blow-up.}

\begin{document}

\maketitle
\begin{abstract}
  In this paper, we consider the 3d cubic focusing inhomogeneous nonlinear Schr\"{o}dinger equation with a potential
  $$
  iu_{t}+\Delta u-Vu+|x|^{-b}|u|^{2}u=0,\;\;(t,x) \in {{\bf{R}}\times{\bf{R}}^{3}},
  $$
  where $0<b<1$. We first establish global well-posedness and scattering for the radial initial data $u_{0}$ in $H^{1}({\bf R}^{3})$
  satisfying $M(u_{0})^{1-s_{c}}E(u_{0})^{s_{c}}<\mathcal{E}$ and $\|u_{0}\|_{L^{2}}^{2(1-s_{c})}\|H^{\frac{1}{2}}u_{0}\|_{L^{2}}^{2s_{c}}<\mathcal{K}$
  provided that $V$
  is repulsive, where $\mathcal{E}$ and $\mathcal{K}$ are the mass-energy and mass-kinetic of the ground states, respectively. Our result extends the results of Hong \cite{H}
  and Farah-Guzm$\acute{\rm a}$n \cite{FG1} with $b\in(0,\frac12)$ to the case $0<b<1$. We then obtain a blow-up result for initial data
  $u_{0}$ in $H^{1}({\bf R}^{3})$
  satisfying $M(u_{0})^{1-s_{c}}E(u_{0})^{s_{c}}<\mathcal{E}$ and $\|u_{0}\|_{L^{2}}^{2(1-s_{c})}\|H^{\frac{1}{2}}u_{0}\|_{L^{2}}^{2s_{c}}>\mathcal{K}$ if $V$ satisfies some
  additional assumptions.
\end{abstract}

\tableofcontents

\section{Introduction}
\setcounter{equation}{0}
In this paper, we consider a 3d cubic focusing inhomogeneous NLS with a potential ($\rm{INLS_{V}}$)
\begin{equation}\label{1.1}
\left\{ \begin{aligned}
  i&u_{t}-Hu+|x|^{-b}|u|^{2}u=0,\;\;(t,x) \in {{\bf{R}}\times{\bf{R}}^{3}}, \\
  u&(0, x)=u_{0}(x)\in H^{1}({\bf{R}}^{3}),
 \end{aligned}\right.
 \end{equation}
where $u: I\times {\bf R}^{3}\rightarrow {\bf C}$ is a complex-valued function, $0<b<1$, $H=H_{0}+V$,
$H_{0}=-\Delta$. Here $V: {\bf R}^{3}\rightarrow {\bf R}$ is a real-valued short range potential with a small negative part,
more precisely,
\begin{align}\label{1.2}
V\in K_{0}\cap L^{\frac{3}{2}}
\end{align}
and
\begin{align}\label{1.3}
\|V_{-}\|_{K}<4\pi,
\end{align}
where the potential class $K_{0}$ is the closure of bounded compactly supported functions with respect to the
global Kato norm
\begin{align*}
\|V\|_{K}\triangleq \sup_{x\in {\bf R}^{3}}\displaystyle\int_{{\bf R}^{3}}\frac{|V(y)|}{|x-y|}dy
\end{align*}
and $V_{-}(x)=\min\{V(x), 0\}$ is the negative part of $V$. In the case $V=0$ and $b=0$, Holmer-Roudenko \cite{HR} and Duyckaerts-Homer-Roudenko \cite{DHR}
obtained the sharp criteria for global well-posedness and scattering in terms of conservation laws of the equation \eqref{1.1},
where blow up result requires initial data is radial.
Subsequently, for $b=0$, Hong \cite{H} established an analogous global well-posed and scattering result provided that $V$ satisfies \eqref{1.2}
and \eqref{1.3}, $V\geq 0$, $x\cdot \nabla V\leq 0$ and $|x||\nabla V|\in L^{\frac{3}{2}}$. However, he cannot give any blow up result.
More recently, for $V=0$, Farah-Guz$\acute{\rm a}$n \cite{FG1} and Dinh \cite{Dinh3}
extended the scattering result and the blow up reslut obtained by Holmer-Roudenko \cite{HR} to $0<b<\frac{1}{2}$ and $0<b<1$ under the radial assumption on the initial data
$u_{0}$, respectively.

The mainly part of this paper is devoted to get a similar criteria for global well-posedness and scattering for \eqref{1.1} with the radial data $u_{0}$
under the similar condition on $V$ as that in \cite{H} and over the wider interval $0<b<1$.
Additionally, we further give a non-scattering or blow-up result based on the method
of Du-Wu-Zhang \cite{DWZ} under some additional assumptions on $V$.

Before the statement of our results, we briefly review some related results for the general $\rm{INLS_{V}}$ equation
\begin{equation}\label{1.4}
\left\{ \begin{aligned}
  i&u_{t}-Hu+g(x)|u|^{p-1}u=0,\;\;(t,x) \in {{\bf{R}}\times{\bf{R}}^{N}}, \\
  u&(0, x)=u_{0}(x),
 \end{aligned}\right.
 \end{equation}
For $p=1+\frac{4}{N}$, several authors have investigated critical mass blow-up solutions. For example, Banica-Carles-Duyckaerts \cite{BCD} showed the
existence of critical mass blow up solutions if $V\in C^{\infty}({\bf R}^{N}, {\bf R})$ and $g\in C^{\infty}({\bf R}^{N}, {\bf R})$ is
sufficiently flat at a critical point. When $V\equiv 0$ and $g\in C^{\infty}({\bf R}^{N}, {\bf R})$ is positive and bounded, Merle \cite{M} and
Rapha$\ddot{\rm e}$l-Szeflel \cite{RS} derived conditions on g for existence/nonexistence of minimal mass blow-up solutions. In the above works,
$V(x)$ and $g(x)$ are both smooth. While Combet-Genoud \cite{CG} studied the classification of minimal mass blow-up solutions in the case $V\equiv0$
and $g(x)=|x|^{-b}$ with $0<b<\min \{2, N\}$, $N\geq 1$. Besides, when $V(x)=-\frac{c}{|x|^{2}}$ with $0<c<\frac{(N-2)^{2}}{4}$, $N\geq 3$, Csobo-Genoud \cite{CG1}
constructed and classified finite time blow-up solutions at the minimal mass threshold.

Next we recall some well-posedness and scattering results for $V(x)\equiv0$ and $g(x)=|x|^{-b}$ with $0<b<\min\{2, N\}$. One can easily see that
the equation \eqref{1.4} is invariant under the scaling transformation $u(t, x)=\lambda^{\frac{2-b}{p-1}}u(\lambda^{2}t, \lambda x)$, which also
leaves the norm of the homogeneous Sobolev space $\dot{H}^{s_{c}}({\bf {R}}^{N})$ invariant, where $s_c=\frac{N}{2}-\frac{2-b}{p-1}$. So we call that
the equation \eqref{1.4} is mass-supercritical and energy-subcritical for
$1+p_{*}<p<1+p^*$ (i.e., $0<s_{c}<1$), where
\begin{equation*}
p^{*}=\left\{
\begin{aligned}
\frac{4-2b}{N-2}, N&\geq 3,\\
\infty, N&=1,2,
\end{aligned}
\right.
\quad\quad p_{*}=\frac{4-2b}{N}.
\end{equation*}
Energy-criticality appears with the power $p=1+\frac{4-2b}{N-2}$ (i.e., $s_{c}=1$) and mass-criticality with power $p=1+\frac{4-2b}{N}$ (i.e., $s_{c}=0$).
Genoud-Stuart \cite{GS}, using the abstract theory developed by Cazenave \cite{Caz}, showed that \eqref{1.4} is locally well-posed in $H^{1}({\bf R}^{N})$ if $1<p<1+p^{*}$ and
globally if $1<p<1+p_{*}$ for any initial data and $1+p_{*}\leq p<1+p^*$ for small initial data. Recently, Guzm$\acute{\rm a}$n \cite{Guz} gave an alternative proof of
these results using the contraction mapping principle based on the Strichartz estimates.

When $p=1+p_{*}$, Genoud \cite{Gen} showed that \eqref{1.4} is global well-posed in $H^{1}({\bf R}^{N})$ if $u_{0}\in H^{1}({\bf R}^{N})$ and
$$
\|u_{0}\|_{L^{2}}<\|Q_{m}\|_{L^{2}},
$$
where $Q_{m}$ is the ground state solution of the nonlinear elliptic equation
$$
\Delta Q_{m}-Q_{m}+|x|^{-b}|Q_{m}|^{\frac{4-2b}{N}}Q_{m}=0.
$$
On the other hand, Combet and Genoud \cite{CG} obtained the classification of minimal mass blow-up solutions for \eqref{1.4} with $p=1+p_{*}$.
When $1+p_*<p<1+p^*$, Farah \cite{Far} proved that \eqref{1.4} is globally well-posed in $H^{1}({\bf R}^{N})$, $N\geq 3$, assuming that
$u_{0}\in H^{1}({\bf R}^{N})$,
\begin{equation}\label{1.5}
E_{0}(u_{0})^{s_{c}}M(u_{0})^{1-s_{c}}<E_{0}(Q)^{s_{c}}M(Q)^{1-s_{c}},
\end{equation}
and
\begin{align}\label{1.6}
\|\nabla u_{0}\|_{L^{2}}^{2s_{c}}\|u_{0}\|_{L^{2}}^{2(1-s_{c})}<\|\nabla Q\|_{L^{2}}^{2s_{c}}\|Q\|_{L^{2}}^{2(1-s_{c})},
\end{align}
where $E_{0}$ and $M$ are a functional in \eqref{1.8} and \eqref{1.9}, and $Q$ is unique positive radial solution of the elliptic equation
\begin{equation}\label{1.7}
\Delta Q-Q+|x|^{-b}|Q|^{p-1}Q=0.
\end{equation}
Farah \cite{Far} also considers the case
\begin{align*}
\|\nabla u_{0}\|_{L^{2}}^{2s_{c}}\|u_{0}\|_{L^{2}}^{2(1-s_{c})}>\|\nabla Q\|_{L^{2}}^{2s_{c}}\|Q\|_{L^{2}}^{2(1-s_{c})},
\end{align*}
which combined  with \eqref{1.5} implies that the solution blows up in finite time if $u_{0}$ satisfies $|x|u_{0}\in L^{2}$.
In the radial case for $u_{0}$, Dinh \cite{Dinh3} removed the the condition $|x|u_{0}\in L^{2}$.

Moreover, Farah-Guzm$\acute{\rm a}$n \cite{FG1, FG2} established scattering in the case that $1+p_{*}<p<1+2^{*}$, $0<b<\min\{\frac{N}{3}, 1\}$ and $u_{0}$ is radial,
where
\begin{equation*}
2^{*}=\left\{
\begin{aligned}
p^{*},\quad N&=2, N\geq 4,\\
3-2b, N&=3.
\end{aligned}
\right.
\end{equation*}
We note that, for $N=3$, the authors imposed an extra assumption, namely, $1+p_{*}<p<1+(3-2b)$ (when $p=3$, $0<b<\frac{1}{2}$). Then they raised
a question whether scattering holds under the condition $1+p_{*}<p<1+p^{*}=1+(4-2b)$. One purpose of this present paper is to give an affirmative answer to this
question when $N=3$ and $p=3$. More precisely, we prove that scattering is true when $0<b<1$ (see Remark \ref{rem5.6}). The main ingredient is Lemma \ref{lem2.3}. In \cite{FG1, FG2},
in order to control nonlinear terms, the authors divide ${\bf R}^{3}$ into two parts: $|x|\leq 1$ and $|x|>1$. Different from them, we mainly utilize the 
Sobolev inequality Lemma \ref{lem2.4} to deal with nonlinear terms, which will make the proof more simple. 

When $b=0$, $V(x)$ is inverse-square potential, Killip-Murply-Visan-Zheng \cite{KMVZ} established the sharp criteria for the global well-posedness and scattering in
terms of conservation laws of \eqref{1.1}. The authors \cite{GWY} extended their results to the case $V(x)=\frac{k}{|x|^{\alpha}}$ with $k>0$ and $1<\alpha< 2$ by 
adopting the variational method of Ibrahim-Masmoudi-Nakanishi \cite{IMN}. 
And Hong \cite{H} established a similar result for real-valued short range and repulsive potential for the equation
\eqref{1.1} as mentioned above. In view of these results, we are further aimed at extending  Hong's result to $0<b<1$. 

Under the assumptions \eqref{1.2} and \eqref{1.3}, the Cauchy problem for \eqref{1.1} is locally well-posed in $H^{1}({\bf R}^{3})$ (see local theory Lemma \ref{lem2.2} ).
Moreover, the $H^{1}$ solution obeys the mass and energy conservation laws,
\begin{align}\label{1.8}
M(u)=\displaystyle\int_{{\bf R}^{3}}|u(x)|^{2}dx,
\end{align}
and the energy is defined by
\begin{align}\label{1.9}
E(u)=E_{V}(u)=\frac{1}{2}\displaystyle\int_{{\bf R}^{3}}|\nabla u(x)|^{2}dx
+\frac{1}{2}\displaystyle\int_{{\bf R}^{3}}V(x)|u(x)|^{2}dx
-\frac{1}{4}\displaystyle\int_{{\bf R}^{3}}|x|^{-b}|u(x)|^{4}dx.
\end{align}
When $V$ vanishes, we just replace $E(u)$ by $E_{0}(u)$.

To state our main results, we need to
introduce some notation as follows:
\begin{equation*}
\mathcal{E}=\left\{
\begin{aligned}
M(Q)^{1-s_{c}}E_{0}(Q)^{s_{c}}, \text{if}& \; V\geq 0,\\
M(\mathcal{Q})^{1-s_{c}}E(\mathcal{Q})^{s_{c}}, \text{if}& \; V\leq 0,
\end{aligned}
\right.
\end{equation*}

\begin{equation*}
\mathcal{K}=\left\{
\begin{aligned}
\|Q\|_{L^{2}}^{2(1-s_{c})}\|\nabla Q\|_{L^{2}}^{2s_{c}}, \text{if}& \; V\geq 0,\\
\|\mathcal{Q}\|_{L^{2}}^{2(1-s_{c})}\|H^{\frac{1}{2}}\mathcal{Q}\|_{L^{2}}^{2s_{c}}, \text{if}& \; V\leq 0,
\end{aligned}
\right.
\end{equation*}
where $Q$ is the ground state for the elliptic equation \eqref{1.7} with $p=3$ and $\mathcal{Q}$ solves the
elliptic equation $$(-\Delta +V)\mathcal{Q}+w_{\mathcal{Q}}^{2}\mathcal{Q}-|x|^{-b}|\mathcal{Q}|^{2}\mathcal{Q}=0, \;
w_{\mathcal{Q}}=\frac{\sqrt{1-b}}{\sqrt{3+b}}\frac{\|H^{\frac{1}{2}}\mathcal{Q}\|_{L^{2}}}{\|\mathcal{Q}\|_{L^{2}}}
$$
(see Lemma \ref{lem3.5} for details).
It follows from Remark \ref{rem3.2} and Lemma \ref{lem3.5} in section 3 and \eqref{4.6}, \eqref{4.7}, \eqref{4.9} and \eqref{4.10} in section 4 that
\begin{align*}
\mathcal{E}
=(\frac{s_{c}}{3+b})^{s_{c}}\mathcal{K}\quad\text{and}\quad\mathcal{K}=\frac{4}{(3+b)C_{GN}}=\frac{4}{(3+b)C_{GN}^{rad}},
\end{align*}
where $C_{GN}$ and $C_{GN}^{rad}$ are the sharp constants in the Gagaliardo-Nirenberg inequalities with the potential $V$, respectively.
It is worth pointing out that under out assumption \eqref{1.2}, $C_{GN}=C_{GN}^{rad}$;
while, in \cite{KMVZ}, $C_{GN}^{rad}<C_{GN}$
when $V(x)=\frac{a}{|x|^{2}}$ with $a>0$.
On the other hand, we will see in section 3 that $C_{GN}=C_{GN}^{rad}$ never be attained when $V$ is nonnegative and not zero a.e. on $\mathbb R^3$,
which is another different phenomenon from the inverse-square-potential case ($V(x)=\frac{a}{|x|^{2}}$ with $a>0$);
while $C_{GN}=C_{GN}^{rad}$ can be reached by $\mathcal Q$ when $V_-\neq0$. (One can find more details in section 3.)

Our first result provides criteria for global well-posedness in terms of the mass-energy $\mathcal{E}$ and
a critical number $\mathcal{K}$, which is involved with the kinetic energy.

\begin{theorem}\label{th1.1}
Suppose that $V$ is radially symmetric and satisfies \eqref{1.2} and \eqref{1.3}, and $0<b<1$. We assume that
\begin{align}\label{1.10}
M(u_{0})^{1-s_{c}}E(u_{0})^{s_{c}}<\mathcal{E}.
\end{align}
Let $u(t)$ be the solution to \eqref{1.1} with initial data $u_{0}\in H^{1}({\bf R}^{3})$.

(i) If
\begin{align}\label{1.11}
\|u_{0}\|_{L^{2}}^{2(1-s_{c})}\|H^{\frac{1}{2}}u_{0}\|_{L^{2}}^{2s_{c}}<\mathcal{K},
\end{align}
then $u(t)$ exists globally in time, and
\begin{align}\label{1.12}
\|u_{0}\|_{L^{2}}^{2(1-s_{c})}\|H^{\frac{1}{2}}u(t)\|_{L^{2}}^{2s_{c}}<\mathcal{K}, \forall t\in{\bf R}.
\end{align}

(ii) If
\begin{align}\label{1.13}
\|u_{0}\|_{L^{2}}^{2(1-s_{c})}\|H^{\frac{1}{2}}u_{0}\|_{L^{2}}^{2s_{c}}>\mathcal{K},
\end{align}
then
\begin{align}\label{1.14}
\|u_{0}\|_{L^{2}}^{2(1-s_{c})}\|H^{\frac{1}{2}}u(t)\|_{L^{2}}^{2s_{c}}>\mathcal{K}
\end{align}
during the maximal existence time.
\end{theorem}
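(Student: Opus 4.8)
The plan is to run the standard Holmer--Roudenko continuity/bootstrap argument adapted to the potential $H=-\Delta+V$ and the inhomogeneous nonlinearity. The starting point is the sharp Gagliardo--Nirenberg inequality with potential established in Section~3, which (via the identities $\mathcal{E}=(s_c/(3+b))^{s_c}\mathcal{K}$ and $\mathcal{K}=4/((3+b)C_{GN})$) lets us bound the potential energy $\tfrac14\int |x|^{-b}|u|^4$ from above by a constant times $\|u\|_{L^2}^{2(1-s_c)\cdot\frac{?}{?}}\|H^{1/2}u\|_{L^2}^{?}$ with exponents calibrated so that, writing $y(t)=\|u_0\|_{L^2}^{2(1-s_c)}\|H^{1/2}u(t)\|_{L^2}^{2s_c}$ and using mass conservation, the energy inequality reads
\begin{align}\label{eq:energybound}
M(u_0)^{1-s_c}E(u_0)^{s_c}\;\ge\; f\big(y(t)\big),
\end{align}
where $f(z)=\tfrac12\, z^{?} - c\, z^{?}$ is, after the scaling, essentially $f(z)=\tfrac12 z - \tfrac{s_c}{2(3+b)} z^{(3+b)/(2s_c)}$ or the appropriately normalized version whose maximum over $z>0$ equals $\mathcal{E}$ and is attained precisely at $z=\mathcal{K}$. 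The hypothesis \eqref{1.10} then says the left side of \eqref{eq:energybound} is strictly below $\max f$.

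Next I would record the key structural facts about $f$: it vanishes at $0$, increases on $(0,\mathcal{K})$, decreases on $(\mathcal{K},\infty)$, and $f(\mathcal{K})=\mathcal{E}$. Combined with \eqref{1.10} this forces $y(t)$ to avoid a neighborhood of $\mathcal{K}$: more precisely there exist $\mathcal{K}_1<\mathcal{K}<\mathcal{K}_2$ with $f(\mathcal{K}_1)=f(\mathcal{K}_2)=M(u_0)^{1-s_c}E(u_0)^{s_c}$, and \eqref{eq:energybound} shows $y(t)\notin(\mathcal{K}_1,\mathcal{K}_2)$ for all $t$ in the maximal interval of existence. Here one must be slightly careful that $E(u)=E_V(u)\ge$ (kinetic-type energy) $-$ (potential energy): the condition \eqref{1.3} on $\|V_-\|_K$ guarantees $H$ is a nonnegative operator (this is used already in the local theory Lemma, and is why $\|H^{1/2}u\|_{L^2}$ is a genuine norm), so the quadratic part $\tfrac12\|H^{1/2}u\|_{L^2}^2$ dominates and the reduction to a one-variable inequality in $y(t)$ goes through with the same algebra as the $V=0$ case.

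For part (i): by \eqref{1.11} we have $y(0)<\mathcal{K}$, hence $y(0)\le\mathcal{K}_1$. Since $t\mapsto y(t)$ is continuous on the maximal existence interval (continuity of $u$ in $H^1$, and continuity/positivity of the functional calculus for $H$) and cannot enter $(\mathcal{K}_1,\mathcal{K}_2)$, it stays $\le\mathcal{K}_1<\mathcal{K}$ for all such $t$; this is \eqref{1.12}. The uniform bound on $\|H^{1/2}u(t)\|_{L^2}$ together with equivalence of $\|H^{1/2}\cdot\|_{L^2}$ and the $H^1$ norm (again from \eqref{1.2}--\eqref{1.3}) gives an a priori $H^1$ bound, so the local theory Lemma iterates to yield a global solution. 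Part (ii) is symmetric: \eqref{1.13} gives $y(0)>\mathcal{K}$, hence $y(0)\ge\mathcal{K}_2$, and the same continuity-plus-exclusion argument keeps $y(t)\ge\mathcal{K}_2>\mathcal{K}$ throughout the maximal interval, which is \eqref{1.14}; note no global existence is claimed here, consistent with the later blow-up result.

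The main obstacle is purely bookkeeping rather than conceptual: pinning down the exact exponents in the Gagliardo--Nirenberg estimate so that the energy functional collapses to the clean one-variable function $f$ whose critical value is exactly $\mathcal{E}$ and critical point exactly $\mathcal{K}$ --- i.e.\ verifying the displayed identities $\mathcal{E}=(s_c/(3+b))^{s_c}\mathcal{K}$ and $\mathcal{K}=4/((3+b)C_{GN})$ are precisely what makes the Holmer--Roudenko dichotomy work verbatim --- and confirming that $\|H^{1/2}\cdot\|_{L^2}\simeq\|\cdot\|_{H^1}$ and $H\ge 0$ under \eqref{1.2}--\eqref{1.3} so that every step that used $\|\nabla u\|_{L^2}$ in the $V=0$ theory transfers to $\|H^{1/2}u\|_{L^2}$.
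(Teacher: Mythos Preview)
Your proposal is correct and follows essentially the same route as the paper: use the sharp Gagliardo--Nirenberg inequality with potential together with the identities $\mathcal{E}=(s_c/(3+b))^{s_c}\mathcal{K}$ and $C_{GN}=4/((3+b)\mathcal{K})$ to reduce the energy inequality to a one-variable function whose unique maximum is $\mathcal{E}$ attained at $\mathcal{K}$, then invoke continuity of $t\mapsto \|H^{1/2}u(t)\|_{L^2}$ to get the dichotomy. The paper parametrizes by $g(t)=\|u_0\|_{L^2}^{2(1-s_c)/s_c}\|H^{1/2}u(t)\|_{L^2}$ with $f(x)=\tfrac12 x^2-\tfrac{x^{2(1+s_c)}}{(3+b)\mathcal{K}}$, which is just a change of variable from your $y(t)=g(t)^{2s_c}$; your explicit mention of the norm equivalence $\|H^{1/2}\cdot\|_{L^2}\simeq\|\nabla\cdot\|_{L^2}$ to upgrade the a priori bound to global existence in~(i) is a detail the paper leaves implicit.
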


\begin{remark}\label{Global}
(i) Theorem \ref{th1.1} also holds provided that nonnegative $V$ satisfies
\begin{align}\label{V1}
V,\; \nabla V\in L^{\delta}+L^{\infty}
\end{align}
for some $\delta\geq\frac{3}{2}$, or
\begin{align}\label{V2}
V\in L^{\delta}+L^{\infty}
\end{align}
for some $\delta>\frac{3}{2}$.
If $V$ satisfies \eqref{V1}, then the local wellposedness
is true by Remark 4.4.8 in \cite{Caz}, Remark 2) on page 103 in \cite{Sch} and Corollary 1.6 in \cite{Guz}, where the contraction mapping principle based on
the Strichartz estimates is used. If $V$ satisfies \eqref{V2}, then the local wellposedness is true by Theorem 4.3.1 in \cite{Caz} and Theorem K.1 and Lemma K.2 in
\cite{GS}, where the abstract theory developed by Cazenave is used.

(ii) The radial condition on $V$ in Theorem \ref{th1.1} is only used in the case that the initial data $u_{0}$ is radial, which will be applied to the
 following scattering result.
\end{remark}

Another result is to show that the global solutions in Theorem \ref{th1.1} also scatters provided that $u_{0}$ is radial,
$V$ is repulsive and $0<b<1$.

\begin{theorem}\label{th1.2}
Let $V$ be radially symmetric and  satisfy \eqref{1.2} and \eqref{1.3}, and assume that $x\cdot\nabla V(x)\leq 0$, $|x||\nabla V|\in L^{\frac{3}{2}}$
and $0<b<1$. If $u_{0}$ is radial data in $H^{1}({\bf R}^{3})$ and satisfies
$$
M(u_{0})^{1-s_{c}}E(u_{0})^{s_{c}}<\mathcal{E}
$$
and
$$
\|u_{0}\|_{L^{2}}^{2(1-s_{c})}\|H^{\frac{1}{2}}u_{0}\|_{L^{2}}^{2s_{c}}<\mathcal{K}
$$
then $u(t)$ scatters in $H^{1}({\bf R}^{3})$. That is, there exists $\phi_{\pm}\in H^{1}({\bf R}^{3})$ such that
\begin{equation*}
\lim_{t\rightarrow \pm\infty}\|u(t)-e^{it\Delta}\phi_{\pm}\|_{H^{1}({\bf R}^{3})}=0.
\end{equation*}
\end{theorem}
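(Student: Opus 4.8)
The plan is to run the Kenig--Merle concentration--compactness/rigidity programme, adapted to the Schr\"odinger operator $H=-\Delta+V$ and to the singular weight $|x|^{-b}$ for the full range $0<b<1$; throughout I rely on the variational facts from Sections~3--4 (Remark~\ref{rem3.2}, Lemma~\ref{lem3.5}, \eqref{4.6}--\eqref{4.10}), the local theory Lemma~\ref{lem2.2}, and the identities $\mathcal E=(\tfrac{s_c}{3+b})^{s_c}\mathcal K$, $\mathcal K=\tfrac{4}{(3+b)C_{GN}}=\tfrac{4}{(3+b)C_{GN}^{rad}}$. \emph{Step 1 (Strichartz, stability, scattering criterion).} Under \eqref{1.2}--\eqref{1.3}, $H\ge0$ is self-adjoint, $\|H^{1/2}f\|_{L^2}\simeq\|\nabla f\|_{L^2}$, and $e^{-itH}$ obeys the same global Strichartz estimates as $e^{it\Delta}$ together with their radial refinements. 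Using the Duhamel formula, these estimates, the fractional chain rule and — crucially for $b$ close to $1$ — the radial Sobolev (Strauss) inequality to absorb $|x|^{-b}$ near the origin, I would prove that for radial $H^1$ data a global solution $u$ of \eqref{1.1} scatters in $H^1$ as $t\to\pm\infty$ if and only if $\|u\|_{S(\R)}<\infty$ for a suitable radial space--time scattering norm $S$, and I would establish small-data scattering and a long-time perturbation/stability lemma. By Theorem~\ref{th1.1} every solution in the hypothesis class is global and satisfies \eqref{1.12}, hence stays uniformly bounded in $H^1$ and strictly below the ground-state threshold for all time, which is the coercivity input for the rigidity step.

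\emph{Step 2 (profile decomposition and the critical element).} Arguing by contradiction, suppose scattering fails somewhere in the hypothesis class and let $\delta_c<\mathcal E$ be the infimum of $M(u_0)^{1-s_c}E(u_0)^{s_c}$ over non-scattering data. I would establish a linear profile decomposition for bounded radial sequences in $H^1$ adapted to $e^{-itH}$: each profile is either carried at a fixed length scale by the full flow $e^{-itH}$, or lives at a shrinking scale (a concentrating or spreading radial bump) where $V$ is asymptotically negligible and the profile is governed by the potential-free radial INLS with $0<b<1$, for which scattering below the sharp threshold holds — this is exactly where $C_{GN}^{rad}=C_{GN}$ is needed, so that no low-scale profile can lower the threshold below $\mathcal E$. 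Feeding this decomposition into the stability lemma in the standard way, one extracts a minimal non-scattering solution $u_c$: global, $u_c\not\equiv0$, $M(u_c)^{1-s_c}E(u_c)^{s_c}=\delta_c$, $\|u_c\|_{S(\R)}=\infty$, with $\{u_c(t):t\in\R\}$ precompact in $H^1(\R^3)$.

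\emph{Step 3 (rigidity).} Fix a smooth radial $\phi$ with $\phi(x)=|x|^2$ for $|x|\le1$ and $\nabla\phi$ bounded, put $\phi_R(x)=R^2\phi(x/R)$ and $M_R(t)=2\,\Im\!\int_{\R^3}\nabla\phi_R\cdot\overline{u_c}\,\nabla u_c\,dx$, so that $|M_R(t)|\lesssim R\,\|u_c(t)\|_{L^2}\|\nabla u_c(t)\|_{L^2}\lesssim R$ uniformly in $t$. A direct computation using \eqref{1.1} gives
\[
\frac{d}{dt}M_R(t)=8\|\nabla u_c\|_{L^2}^2-2(3+b)\!\int_{\R^3}\!|x|^{-b}|u_c|^4\,dx-4\!\int_{\R^3}\!(x\cdot\nabla V)|u_c|^2\,dx+\mathrm{Err}_R(t),
\]
where $\mathrm{Err}_R(t)$ is supported in $|x|\gtrsim R$ and, using $|x||\nabla V|\in L^{3/2}$, $V\in L^{3/2}$ and the precompactness of the orbit (again with the radial Strauss inequality controlling the inhomogeneous term, which is locally integrable against radial $H^1$ functions precisely because $b<1$), satisfies $\sup_t|\mathrm{Err}_R(t)|=o_R(1)$. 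Since $x\cdot\nabla V\le0$ the potential term is $\ge0$, and the variational analysis of Section~4 together with precompactness gives a uniform lower bound $8\|\nabla u_c\|_{L^2}^2-2(3+b)\int|x|^{-b}|u_c|^4\ge c>0$. Hence $\frac{d}{dt}M_R(t)\ge c-o_R(1)$; integrating over $[0,T]$ yields $cT\le|M_R(T)-M_R(0)|+o_R(1)T\lesssim R+o_R(1)T$, and choosing $R$ large and letting $T\to\infty$ is a contradiction. Therefore $u_c\equiv0$, contradicting $u_c\not\equiv0$, and Theorem~\ref{th1.2} follows.

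\emph{Main obstacle.} The essential new difficulty, relative to Hong~\cite{H} and Farah--Guzm\'an~\cite{FG1}, is reaching the whole interval $0<b<1$: for $b\ge\tfrac12$ the weight $|x|^{-b}$ is no longer handled by the plain Strichartz machinery, so it must be replaced systematically by its radial refinement (Strauss-type pointwise decay and radial Strichartz estimates) in the local and stability theory, in the profile decomposition, and in the error control of the virial identity — all while keeping track of the potential term $\int(x\cdot\nabla V)|u|^2$, whose finiteness and favourable sign rest on $|x||\nabla V|\in L^{3/2}$ and the repulsivity $x\cdot\nabla V\le0$. A second, more technical point is the bookkeeping in the $e^{-itH}$ profile decomposition: one must show that only the bounded-scale profiles actually ``see'' $V$ and that the rest reduce genuinely to the $V=0$ radial INLS, so that $\mathcal E$ — not a smaller radial threshold — is the correct critical value, which is precisely what $C_{GN}=C_{GN}^{rad}$ guarantees.
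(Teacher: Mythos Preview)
Your three–step outline is exactly the paper's architecture: Kenig--Merle concentration--compactness/rigidity, with your Steps~1--3 corresponding to Sections~2, 5 and~6 (Strichartz/stability/small-data scattering, profile decomposition and extraction of the critical element, localized virial rigidity). Your virial computation and the use of precompactness plus $|x||\nabla V|\in L^{3/2}$ to kill the error in Step~3 match Theorem~\ref{th6.1} line for line.

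The one genuine difference is the mechanism you propose for reaching the full range $0<b<1$. You attribute the extension to the radial Strauss inequality and radial Strichartz refinements, used ``to absorb $|x|^{-b}$ near the origin.'' The paper does something different and arguably cleaner: the key nonlinear estimates (Lemma~\ref{lem2.3}, i.e.\ \eqref{2.5}--\eqref{2.7}) are proved for all $0<b<1$ via the Stein--Weiss weighted Sobolev inequality (Lemma~\ref{lem2.4}), which converts the factors $|x|^{-b}$ and $|x|^{-b-1}$ directly into $|\nabla|^{s_c}$ acting on one copy of $u$; paired with ordinary Strichartz exponents such as $(4,3)$, $(\tfrac4b,\tfrac6{3-b})\in\Lambda_0$ and $(\tfrac4{1-b},6)\in\Lambda_{s_c}$, this closes the local theory, small-data scattering and perturbation lemmas with no radiality hypothesis on the nonlinear estimates at all. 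In the paper, radiality is used only to suppress the spatial translation parameter in the profile decomposition and to keep the flow radial, not to tame the singular weight. Your Strauss-based route may also work, but note that Strauss gives pointwise decay at infinity, not control at the origin, so the phrase ``absorb $|x|^{-b}$ near the origin'' needs reinterpretation (e.g.\ a ball/complement splitting with a Hardy-type bound inside); the Stein--Weiss device avoids any such splitting. A second minor organizational difference: the paper does not build a multiscale profile decomposition; instead it rescales the whole sequence $u_{n,0}$ by $r_n=\|u_{n,0}\|_{L^2}^{-1/s_c}$ and applies a single-scale decomposition with respect to $e^{-itH_{r_n}}$ (Proposition~\ref{pro5.2}), showing first that $r_n\sim1$ by reducing the alternative cases $r_n\to0,\infty$ to the potential-free radial INLS via Remark~\ref{rem5.6} and the threshold comparison \eqref{5.73} --- which is where the equality $C_{GN}=C_{GN}^{rad}$ enters, exactly as you anticipated.
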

\vskip0.5cm
\begin{remark}\label{rem1.3}
(i) If $V$ is radial and $x\cdot\nabla V(x)\leq 0$, then $V$ is decreasing. Also as $V\in L^{\frac{3}{2}}$, $V(x)\rightarrow 0$ as $|x|\rightarrow+\infty$.
So $V\geq 0$. 

(ii) In the defocusing case and without potentials, Dinh \cite{Dinh2} obtained scattering in $H^{1}({\bf R}^{N})$ provided that $N\geq 4$, $0<b<2$,
$1+p_*<p<1+p^*$, or $N=3$, $0<b<1$, $1+\frac{5-2b}{3}<p<1+(3-2b)$, or $N=2$, $0<b<1$, $1+p_*<p<1+p^*$. It is easy to see that
when $N=3$ and $p=3$, $b$ still satisfies $0<b<\frac{1}{2}$. By small modifications of the proofs of Theorem \ref{th1.2}, one can also obtain scattering for 3d cubic
defocusing $\rm{INLS_{V}}$
\begin{equation}\label{1.15}
\left\{ \begin{aligned}
  i&u_{t}-Hu-|x|^{-b}|u|^{2}u=0,\;\;(t,x) \in {{\bf{R}}\times{\bf{R}}^{3}}, \\
  u&(0, x)=u_{0}(x)\in H^{1}({\bf{R}}^{3}),
 \end{aligned}\right.
 \end{equation}
provided that $u_{0}$ is radial, $0<b<1$ and the confining part of the potential $(x\cdot\nabla V(x))_{+}=\max\{x\cdot\nabla V(x), 0\}$ is small,
precisely,
\begin{equation}\label{1.16}
\|(x\cdot\nabla V(x))_{+}\|_{K}<8\pi.
\end{equation}
In other words, our result extends the result of Dinh \cite{Dinh2} with $0<b<\frac{1}{2}$ into $0<b<1$ in the case of the radial data.
For details of the proof, one can also refer to the one of Theorem B.1 in Hong \cite{H}.
\end{remark}

Finally, we turn to establish the blow-up criterion. To this end, we need introduce another functional associated with the called Virial type identity.
\begin{align}\label{K}
K(u)=\int|\nabla u|^2dx-\frac12\int(x\cdot\nabla V)|u|^2dx-\frac{3+b}4\int|x|^{-b}|u|^4dx.
\end{align}
It follows from Remark \ref{Global} (i) that Theorem 1.1 holds provided that nonnegative $V\in L^{\delta}$ for some $\delta>\frac{3}{2}$. Under some additional assumptions on $V$, that is,
$x\cdot\nabla V\in L^{\delta}$ and $V$ satisfies the following \eqref{Ve1}, we apply the method of Du-Wu-Zhang \cite{DWZ} to obtain a blow-up result, which will be stated as follow.

\begin{theorem}\label{the1}
Suppose that nonnegative $V, \;x\cdot\nabla V\in L^{\delta}$ for some $\delta>\frac{3}{2}$ and $V$ satisfies
\begin{equation}\label{Ve1}
x\cdot\nabla V\leq0,\ \ and\ \ 2V+x\cdot\nabla V\geq0.
 \end{equation}
 We assume that  $0<b<1$ and
\begin{align*}
M(u_{0})^{1-s_{c}}E(u_{0})^{s_{c}}<\mathcal{E}.
\end{align*}
Let $u\in C([0,T_{max}),H^1(\mathbb R^3))$ be the solution to \eqref{1.1} with initial data $u_{0}\in H^{1}({\bf R}^{3})$.
If
\begin{align}\label{bK}
\|u_{0}\|_{L^{2}}^{2(1-s_{c})}\|H^{\frac{1}{2}}u_{0}\|_{L^{2}}^{2s_{c}}>\mathcal{K},
\end{align}
then one of the following two statements holds true:

(i)$T_{max}<\infty$, and
$$\lim_{t\uparrow T_{max}}\|\nabla u(t)\|_{L^2}=\infty.$$

(ii)$T_{max}=\infty$, and there exists a time sequence $\{t_n\}$ such that $t_n\rightarrow\infty$, and
$$\lim_{t_n\uparrow T_{max}}\|\nabla u(t_n)\|_{L^2}=\infty.$$
\end{theorem}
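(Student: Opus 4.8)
\textbf{Proof proposal for Theorem \ref{the1}.}

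The plan is to run a virial/convexity argument adapted to the potential term, following Du--Wu--Zhang \cite{DWZ}. The starting point is the observation that the hypothesis $M(u_0)^{1-s_c}E(u_0)^{s_c}<\mathcal E$ together with $\|u_0\|_{L^2}^{2(1-s_c)}\|H^{1/2}u_0\|_{L^2}^{2s_c}>\mathcal K$ places the solution in the ``above the threshold'' region, and this region is invariant: by the continuity of $t\mapsto\|H^{1/2}u(t)\|_{L^2}$, conservation of mass and energy, and the variational characterization of $\mathcal E,\mathcal K$ (exactly as in Theorem \ref{th1.1}(ii)), one gets $\|u_0\|_{L^2}^{2(1-s_c)}\|H^{1/2}u(t)\|_{L^2}^{2s_c}>\mathcal K$ for all $t$ in the maximal interval. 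First I would convert this into a quantitative coercivity statement for the virial functional $K(u)$ defined in \eqref{K}: there exists $\delta_0>0$ (depending only on the mass-energy gap) such that $K(u(t))\le -\delta_0\|\nabla u(t)\|_{L^2}^2$ (or at least $K(u(t))\le -\delta_0<0$ uniformly) for all $t$. This is the step where the sign conditions \eqref{Ve1}, namely $x\cdot\nabla V\le0$ and $2V+x\cdot\nabla V\ge0$, get used: the first makes the $-\tfrac12\int(x\cdot\nabla V)|u|^2$ term in $K$ nonnegative and compatible with the kinetic term, and the second is what is needed to relate $K(u)$ to $E(u)$ and the kinetic energy with the right signs (writing $K(u)=(3+b)E(u)-\tfrac{1+b}{2}\|\nabla u\|_{L^2}^2-\tfrac14\int(2V+x\cdot\nabla V)|u|^2$, the last term has a favorable sign).

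Next I would compute the localized virial quantity. Let $\chi$ be a smooth radial cutoff, $\chi(x)=|x|^2$ for $|x|\le1$, $\chi$ constant for $|x|\ge2$, and set $\varphi_R(x)=R^2\chi(x/R)$, $M_R[u(t)]=2\,\mathrm{Im}\int\nabla\varphi_R\cdot\nabla u\,\bar u\,dx$. The standard computation gives
\begin{equation*}
\frac{d}{dt}M_R[u(t)]=8K(u(t))+\mathcal R_R(t),
\end{equation*}
where $\mathcal R_R(t)$ collects the error terms supported in $|x|\ge R$, coming from the regions where $\nabla^2\varphi_R\ne2I$; these involve $\|\nabla u\|_{L^2(|x|\ge R)}^2$, a term $\int_{|x|\ge R}|x|^{-b}|u|^4$ (controlled by the radial Strauss inequality, using that $b>0$ so $|x|^{-b}$ decays), a potential-gradient error $\int_{|x|\ge R}|x||\nabla V||u|^2$ bounded via $|x||\nabla V|\in L^\delta$... wait, here I only have $x\cdot\nabla V\in L^\delta$; I would instead control the $V$-errors using $x\cdot\nabla V\le0$ directly so that the dangerous sign is killed, plus the $L^\delta$ bound to make the tail small. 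Using the radial Gagliardo--Nirenberg/Strauss estimates exactly as in Dinh \cite{Dinh3} and Farah--Guzm\'an \cite{FG1}, together with the uniform $H^1$-subcriticality available on the region where $\|\nabla u(t)\|_{L^2}$ stays bounded, one obtains $|\mathcal R_R(t)|\le \varepsilon(R)\big(1+\|\nabla u(t)\|_{L^2}^2\big)$ with $\varepsilon(R)\to0$.

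Then the dichotomy follows by contradiction. Suppose neither (i) nor (ii) holds; then $T_{\max}=\infty$ (because if $T_{\max}<\infty$ the blow-up alternative forces $\|\nabla u(t)\|_{L^2}\to\infty$, which is (i)), and moreover $\sup_{t\ge0}\|\nabla u(t)\|_{L^2}=:A<\infty$. Choosing $R$ large enough that $\varepsilon(R)(1+A^2)\le 4\delta_0$ (where $4\delta_0$ is half of $8\delta_0$, the coercive gain $8|K(u)|\ge 8\delta_0$), we get $\frac{d}{dt}M_R[u(t)]\le -4\delta_0<0$ for all $t\ge0$, so $M_R[u(t)]\to-\infty$ linearly. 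But $|M_R[u(t)]|\le C R\,\|\nabla u(t)\|_{L^2}\|u(t)\|_{L^2}\le CRA\,M(u_0)^{1/2}$ is bounded, a contradiction. Hence (i) or (ii) must hold. I expect the main obstacle to be the error-term management in $\mathcal R_R(t)$ under the \emph{weaker} hypothesis $x\cdot\nabla V\in L^\delta$ (rather than $|x||\nabla V|\in L^{3/2}$ as in the scattering theorem): one must exploit the sign condition $x\cdot\nabla V\le0$ to absorb the bad part rather than estimate it crudely, and verify that $2V+x\cdot\nabla V\ge0$ is exactly what makes the coercivity constant $\delta_0$ survive — this interplay of the two conditions in \eqref{Ve1} is the delicate point, along with checking that the radial inhomogeneous-nonlinearity estimates of \cite{Dinh3} indeed extend to the full range $0<b<1$ uniformly in the cutoff scale.
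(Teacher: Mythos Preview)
Your strategy—prove uniform negativity $K(u(t))\le\beta_0<0$ from the identity linking $K$ to $E$ and the kinetic energy, then derive a contradiction from a localized virial assuming $\sup_t\|\nabla u(t)\|_{L^2}=A<\infty$—is sound, but two points need repair. First, Theorem~\ref{the1} does \emph{not} assume radial data, so the radial Strauss inequality is unavailable; moreover your bound $|\mathcal R_R(t)|\le\varepsilon(R)(1+\|\nabla u\|_{L^2}^2)$ fails as written for the kinetic error $\int_{|x|\ge R}|\nabla u|^2$, which does not decay in $R$ uniformly in $t$. The fix is that with a cutoff satisfying $\chi''\le2$ and $\chi'(s)\le 2s$ the kinetic remainder has a \emph{sign} and can simply be dropped, the potential remainder is $\le0$ by $x\cdot\nabla V\le0$ (as you note), and the nonlinear tail obeys $\int_{|x|\ge R}|x|^{-b}|u|^4\le R^{-b}\|u\|_{L^4}^4\lesssim R^{-b}m_0A^3\to0$ purely from the factor $|x|^{-b}$—no radiality needed. (Also, the correct identity is $K(u)=(3+b)E(u)-\tfrac{1+b}{2}\|H^{1/2}u\|_{L^2}^2-\tfrac12\int(2V+x\cdot\nabla V)|u|^2\,dx$, with $H^{1/2}$ and coefficient $\tfrac12$.) With these corrections your argument closes.

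The paper takes a genuinely different route: it first reduces to Theorem~\ref{the2} (your coercivity step is exactly the verification of \eqref{Kbeta}), and then proves Theorem~\ref{the2} via the full Du--Wu--Zhang device—an $L^2$ exterior-ball estimate showing $\int_{|x|\ge R}|u(t)|^2\le\eta_0+o_R(1)$ on the window $t\lesssim\eta_0 R$, bounding the virial errors by powers of this exterior mass, and integrating $I''$ twice over $[0,T]$ to force $I(T)<0$. That machinery is essential when the nonlinear tail carries no built-in spatial decay (e.g.\ the homogeneous case $b=0$), since then the error is controlled only by the exterior mass, which is not uniformly small in $t$. Here the inhomogeneity $|x|^{-b}$ with $b>0$ already supplies the decay, so your direct one-step virial contradiction is a legitimate and more economical shortcut, while the paper's approach is the more robust one that would survive at $b=0$.
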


\begin{remark}\label{ree1}
 If $V$ is radial, the condition \eqref{Ve1} implies that $|x|^{-2}\lesssim |V(x)|$ for large $|x|$, which deduces that $V\notin L^{\frac{3}{2}}$.
 So we don't
give the blow up result under the condition \eqref{1.2} in this paper.

\end{remark}

Actually, the proof of Theorem \ref{the1} can be obtained by the following result.
\begin{theorem}\label{the2}
Under the same assumptions as in Theorem \ref{the1}, if there exists $\beta_0<0$ such that there holds
\begin{align}\label{Kbeta}
\sup_{t\in[0,T_{max})}K(u(t))\leq\beta_0<0,
\end{align}
then there exists no global solution $u\in C([0,+\infty),H^1(\mathbb R^3))$ with
\begin{equation}
\sup_{t\in\mathbb R^+}\|\nabla u(t,\cdot)\|_{L^2}<\infty.
\end{equation}
\end{theorem}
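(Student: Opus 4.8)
The plan is to argue by contradiction using a localized virial/Morawetz identity. Assume such a global solution $u$ exists with $\sup_{t\in\mathbb R^+}\|\nabla u(t)\|_{L^2}<\infty$ and with $\sup_{t}K(u(t))\le\beta_0<0$. For $R>0$ fixed and a smooth cutoff $\varphi_R(x)=R^2\varphi(x/R)$, where $\varphi$ is radial, $\varphi(x)=|x|^2$ for $|x|\le1$ and $\varphi$ is constant for $|x|\ge2$ with $\varphi''\le2$, introduce the truncated virial quantity
\begin{align*}
M_R(t)=2\,{\rm Im}\int_{\mathbb R^3}\overline{u(t,x)}\,\nabla\varphi_R(x)\cdot\nabla u(t,x)\,dx.
\end{align*}
Since $\|\nabla u(t)\|_{L^2}$ and $\|u(t)\|_{L^2}$ are bounded uniformly in $t$ (the latter by mass conservation), one gets $|M_R(t)|\le C_R$ with $C_R\lesssim R\,\sup_t\|\nabla u(t)\|_{L^2}\|u(t)\|_{L^2}$.

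Next I would compute $\tfrac{d}{dt}M_R(t)$ using equation \eqref{1.1}. The standard computation produces the main term $8K(u(t))$ coming from the region $|x|\le R$ (where $\varphi_R(x)=|x|^2$ and the identity is exact), together with error terms supported on $R\le|x|\le2R$. These errors split into three types: a kinetic-type term bounded by $\int_{|x|\ge R}|\nabla u|^2$, a nonlinear-type term of the form $\int_{|x|\ge R}|x|^{-b}|u|^4\,dx$, and a potential-type term involving $\int_{|x|\ge R}\big(2V+x\cdot\nabla V\big)|u|^2$ and $\int_{|x|\ge R}(x\cdot\nabla V)|u|^2$. The potential errors are handled using the sign conditions in \eqref{Ve1}: since $x\cdot\nabla V\le0$ and $2V+x\cdot\nabla V\ge0$, the contributions from the potential either vanish or come with a favorable sign, so that $\tfrac{d}{dt}M_R(t)\le 8K(u(t))+(\text{kinetic and nonlinear errors on }|x|\ge R)$. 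For the nonlinear error on $|x|\ge R$, I would use the radial Strauss-type inequality together with the boundedness of $\|u(t)\|_{H^1}$ to obtain a bound of the form $o_R(1)$ as $R\to\infty$ — here $0<b<1$ ensures the weight $|x|^{-b}$ is controlled on $|x|\ge R$ by $R^{-b}$ times an $L^4$ norm that is uniformly bounded. Thus for $R$ large enough, $\tfrac{d}{dt}M_R(t)\le 8\beta_0+o_R(1)\le 4\beta_0<0$ for all $t\ge0$.

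Integrating from $0$ to $t$ gives $M_R(t)\le M_R(0)+4\beta_0 t\to-\infty$ as $t\to\infty$, contradicting the uniform bound $|M_R(t)|\le C_R$. This completes the argument. The main obstacle is controlling the error terms on the annulus $R\le|x|\le2R$ uniformly in time: one must show these are genuinely small (not just bounded) for large $R$, which forces the use of the radial assumption on $u$ (via a Strauss-type decay estimate) to handle the inhomogeneous nonlinear term $|x|^{-b}|u|^4$, and careful bookkeeping of the potential terms so that the sign hypotheses \eqref{Ve1} can be exploited rather than merely absorbed. A secondary technical point is that $M_R$ is only formally differentiable; one first justifies the virial identity for $H^2$ data or via a standard approximation/regularization argument, then passes to the $H^1$ solution by density and the local theory Lemma \ref{lem2.2}.
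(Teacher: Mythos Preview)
Your argument has a genuine gap: Theorem \ref{the1} (and hence Theorem \ref{the2}) does \emph{not} assume the initial data $u_0$ is radial, so you cannot invoke a radial Strauss-type decay estimate to make the nonlinear tail $\int_{|x|\ge R}|x|^{-b}|u|^4\,dx$ small uniformly in $t$. For non-radial data there is no reason why the $L^4$-mass (or the $L^2$-mass) cannot drift to spatial infinity as $t\to\infty$, so the claim that the annular errors are $o_R(1)$ uniformly in time is unjustified. The same issue arises for the kinetic error $\int_{|x|\ge R}|\nabla u|^2$, which you list but never actually estimate; this one can in fact be disposed of by a sign argument (your hypothesis $\varphi''\le 2$ forces $\varphi'\le 2r$, and then the full Hessian error is $\le 0$), but that needs to be said explicitly.

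The paper avoids uniform-in-time smallness of the tail altogether by the Du--Wu--Zhang two-step scheme. First, a \emph{separate} localized-virial argument (with a cutoff that is $0$ near the origin and $1$ for $|x|\ge R$) shows that the exterior mass $\int_{|x|\ge R}|u(t)|^2\,dx$ stays below a prescribed $\eta_0$ on a time window $[0,T]$ with $T\sim \eta_0 R$. Second, with your weight $\varphi_R$ one proves $I''(t)\le 8K(u(t))+\tilde C\,\|u(t)\|_{L^2(|x|>R)}^{\theta_0}$ (the kinetic and potential errors have a sign; the nonlinear error is bounded via interpolation against the exterior $L^2$-mass, not via Strauss). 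One then integrates \emph{twice} on $[0,T]$, using $I(0)=o_R(1)R^2$ and $I'(0)=o_R(1)R$, to get $I(T)\le o_R(1)R^2+\alpha_0 R^2$ with $\alpha_0<0$ independent of $R$; for $R$ large this forces $I(T)<0$, contradicting $I\ge 0$. The key conceptual point you are missing is that one only needs smallness of the tail on a window of length $\sim R$, not for all time, and one buys that window by a first-order (mass-flux) virial estimate.
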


This present paper is organized as follows. We fix notations at the end of section 1. In section 2,
We establish Strichartz type estimates, upon
which we obtain linear scattering, local theory, the small data scattering and the perturbation theory.
The variational structure of the ground state of an elliptic problem is given in section 3.
In section 4, we prove a dichotomy proposition of global well-posedness versus blowing up, which
yields the comparability of the total energy and the kinetic energy.
The concentration compactness principle is used in section 5 to give a critical element, which
yields a contradiction through a virial-type estimate in section 6, concluding the proof of Theorem \ref{th1.2}.
In the last section, we use the localized virial identity to give the proofs of Theorem \ref{the1} and Theorem \ref{the2}.

\textbf{Notations:}:

We fix notations used throughout the paper. In what follows, we write $A\lesssim B$ to signify
that there exists a constant $C$ such that $A\leq CB$, while we denote $A\sim B$ when
$A\lesssim B\lesssim A$.

Let $L^{q}=L^{q}({\bf R}^{N})$ be the usual Lebesgue spaces, and $L_{I}^{q}L_{x}^{r}$ or $L^q(I,L^r)$ be the space
of measurable functions from an interval $I\subset {\bf R}$ to $L_{x}^{r}$ whose $L_{I}^{q}L_{x}^{r}$-
norm $
\|\cdot\|_{L_{I}^{q}L_{x}^{r}}$ is finite, where
\begin{align}\label{1.11}
\|u\|_{L_{I}^{q}L_{x}^{r}}=\Big(\displaystyle\int_{I}\|u(t)\|_{L_{x}^{r}}^{q}dt\Big)^{\frac{1}{r}}.
\end{align}
When $I={\bf R}$ or $I=[0, T]$, we may use $L_{t}^{q}L_{x}^{r}$ or $L_{T}^{q}L_{x}^{r}$ instead of
$L_{I}^{q}L_{x}^{r}$, respectively. In particular, when $q=r$, we may simply write them as
$L_{t,x}^{q}$ or $L_{T,x}^{q}$, respectively.

Moreover, the Fourier transform on ${\bf R}^{N}$ is defined by
$\hat{f}(\xi)=(2\pi)^{-\frac{n}{2}}\displaystyle\int_{{\bf R}^{N}} e^{-ix\cdot\xi}f(x)dx$.
For $s\in {\bf R}$, define the inhomogeneous Sobolev space by
$$
H^{s}({\bf R}^{N})=\{f\in S'({\bf R}^{N}):
\displaystyle\int_{{\bf R}^{N}}(1+|\xi|^{2})^{s}|\hat{f}(\xi)|^{2}d\xi<\infty\}
$$
and the homogeneous Sobolev space by
$$\dot{H}^{s}({\bf R}^{N})=\{f\in S'({\bf R}^{N}):
\displaystyle\int_{{\bf R}^{N}}|\xi|^{2s}|\hat{f}(\xi)|^{2}d\xi<\infty\},
$$
where $S'({\bf R}^{N})$ denotes the space of tempered distributions.

Given $p\geq 1$, let $p'$ be the conjugate of $p$, that is $\frac{1}{p}+\frac{1}{p'}=1$.

{\bf Acknowledgement}\ \ The first author is financially supported by the
China National Science Foundation (No.11301564, 11771469), the second author is
financially supported by the
China National Science Foundation ( No. 11101172, 11371158 and
11571131), and the third author is financially supported by the
China National Science Foundation( No. 11371158 and 11771165).



\section{Preliminaries}\label{sec-2}
\setcounter{equation}{0}

We start in this section with recalling the Strichartz estimates and norm equivalence established by Hong \cite{H}.
 We say a pair $(q, r)$ is
Schr\"{o}dinger admissible, or $L^{2}$-admissible, if $2\leq r\leq\infty$ and
$$
\frac{2}{q}+\frac{3}{r}=\frac{3}{2}.
$$
We say that a pair  $(q,r)$ is $\dot{H}^{s}$- admissible and denote it by
 $(q,r)\in\Lambda_{s}$ if $0\leq s<1$, $\frac{6}{3-2s}\leq r\leq 6$ and
$$\frac{2}{q}+\frac{3}{r}=\frac{3}{2}-s$$
Correspondingly, we call the pair $(q',r')$
dual~$\dot{H}^{s}$-admissible, denoted by
$(q',r')\in\Lambda'_{s}$, if
 $(q,r)\in\Lambda_{-s}$, $(\frac{6}{3-2s})^+\leq r\leq 6$ and $(q',r')$ is the conjugate exponent pair of $(q,r).$
 In particular, $(q,r)\in\Lambda_0$ is just a $L^{2}$-admissible pair.

We define the Strichatz norm by
$$
\|u\|_{S(L^{2}, I)}:=\sup_{(q,r):{\rm L^{2}-admissible}}\Vert u\Vert_{L^q(I,L^r)}
$$
and its dual norm by
$$
\|u\|_{S'(L^{2}, I)}:=\inf_{(q,r):{\rm L^{2}-admissible}}\Vert u\Vert_{L^{q'}(I,L^{r'})}
$$

We also define the exotic Strichartz norm by
$$
\|u\|_{S(\dot{H}^{s},I)}:=\sup_{(q,r)\in\Lambda_{s}}\|u\|_{L^q(I;L^r)}
$$
and its dual norm by
$$
\|u\|_{S'(\dot{H}^{-s},I)}:=\inf_{(q,r)\in\Lambda_{-s}}\|u\|_{L^{q'}(I;L^{r'})}
$$

Combining the results obtained by \cite{Keel} and \cite{Fos},
the following Strichartz estimates and Kato inhomogeneous Strichartz estimates on  $I=[0,T]$ are true: If $V$ satisfies \eqref{1.2} and
\eqref{1.3}, then
\begin{align}\label{2.1}
\left\|e^{-itH}f+\int_0^te^{-i(t-s)H}F(\cdot,s)ds\right\|_{S(\dot{H}^{s},I)}\lesssim \|f\|_{\dot{H}^{s}}+\|F\|_{S'(\dot{H}^{-s},I)}.
\end{align}
If the time interval $I$ is not specified, we take $I={\bf R}$, and $S(\dot{H}^{s},I)$ can be abbreviated as $S(\dot{H}^{s})$, similarly
for $S'(\dot{H}^{-s},I)$.

In addition, in order to establish local theory, the two norm equivalent relations between the standard Sobolev norms and the
Sobolev norms associated with $H$ are needed:
If $V$ satisfies \eqref{1.2} and
\eqref{1.3}, then
\begin{align}\label{2.2}
\|H^{\frac{s}{2}}f\|_{L^{r}}\sim\|H_{0}^{\frac{s}{2}}f\|_{L^{r}}
\sim \big\||\nabla|^{s}f\big\|_{L^{r}} \; {\rm and}\; \|(1+ H)^{\frac{s}{2}}f\|_{L^{r}}\sim\|(1+ H_{0})^{\frac{s}{2}}f\|_{L^{r}}
\sim \|\langle\nabla\rangle^{s}f\|_{L^{r}}
\end{align}
where $s\in [0, 2]$ and $1<r<\frac{3}{s}$.

As a simple application of \eqref{2.2}, the following linear scattering result holds.

\begin{lemma}\label{lem2.1} \cite{H}
 (i) If $V$ satisfies \eqref{1.2} and
\eqref{1.3}, then
 for any given $\phi\in L^{2}({\bf R}^{3})$, there exist $\phi^{\pm}$ such that
\begin{align}\label{2.3}
\lim_{t\rightarrow\pm\infty}\|e^{-itH_{0}}\phi-e^{-itH}\phi^{\pm}\|_{L^2({\bf R}^{3})}=0.
\end{align}
(ii) If further assume that $\nabla V\in L^{\frac{3}{2}}$, then
for any given $\phi\in H^{1}({\bf R}^{3})$, there exist $\phi^{\pm}$ such that
\begin{align}\label{2.4}
\lim_{t\rightarrow\pm\infty}\|e^{-itH_{0}}\phi-e^{-itH}\phi^{\pm}\|_{H^1({\bf R}^{3})}=0.
\end{align}
\end{lemma}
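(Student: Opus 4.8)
The assertion is precisely the existence of the wave operators intertwining the free flow $e^{-itH_0}$ and the perturbed flow $e^{-itH}$, so the plan is to run Cook's method, converting the claim into a time-integrability statement that I then settle with the Strichartz estimates for both propagators. Set $u(t)=e^{itH}e^{-itH_0}\phi$. Since $e^{-itH}$ is unitary on $L^2$, the identity $\|e^{-itH_0}\phi-e^{-itH}\phi^{\pm}\|_{L^2}=\|u(t)-\phi^{\pm}\|_{L^2}$ shows that \eqref{2.3} is equivalent to the existence of $\phi^{\pm}:=\lim_{t\to\pm\infty}u(t)$ in $L^2$, and by completeness it suffices to verify the Cauchy criterion. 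Differentiating,
\begin{equation*}
\frac{d}{dt}u(t)=i\,e^{itH}(H-H_0)e^{-itH_0}\phi=i\,e^{itH}Ve^{-itH_0}\phi,
\end{equation*}
so for $t>t'$ I must bound $\big\|\int_{t'}^{t}e^{isH}Ve^{-isH_0}\phi\,ds\big\|_{L^2}$ and show it vanishes as $t',t\to+\infty$ (the limit $t,t'\to-\infty$ being symmetric).

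A naive application of Minkowski's inequality would require $\|Ve^{-isH_0}\phi\|_{L^2}\in L^1_s$, which fails for general $L^2$ data; instead I would argue by duality. For $g\in L^2$ with $\|g\|_{L^2}=1$, using that $V$ is real and the factorization $V=|V|^{1/2}\cdot\mathrm{sgn}(V)|V|^{1/2}$,
\begin{equation*}
\Big|\Big\langle \int_{t'}^{t}e^{isH}Ve^{-isH_0}\phi\,ds,\;g\Big\rangle\Big|
\le\int_{t'}^{t}\big\||V|^{1/2}e^{-isH_0}\phi\big\|_{L^2}\,\big\||V|^{1/2}e^{-isH}g\big\|_{L^2}\,ds.
\end{equation*}
By Cauchy--Schwarz in $s$, Hölder with $|V|\in L^{3/2}$, and the fact that $(2,6)$ is $L^2$-admissible,
\begin{equation*}
\big\||V|^{1/2}e^{-isH_0}\phi\big\|_{L^2_{[t',t]}L^2_x}\le\|V\|_{L^{3/2}}^{1/2}\,\|e^{-isH_0}\phi\|_{L^2_{[t',t]}L^6_x},
\end{equation*}
and likewise for the $e^{-isH}g$ factor. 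The Strichartz estimate \eqref{2.1} in its $L^2$-admissible ($s=0$) case bounds the $g$-factor by $\|V\|_{L^{3/2}}^{1/2}\|g\|_{L^2}$, while $\|e^{-isH_0}\phi\|_{L^2_{{\bf R}}L^6_x}\lesssim\|\phi\|_{L^2}<\infty$ forces its tail $\|e^{-isH_0}\phi\|_{L^2_{[t',t]}L^6_x}$ to vanish as $t',t\to+\infty$. Taking the supremum over $g$ gives the Cauchy criterion, and hence (i); note that this route needs no density argument.

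For part (ii) I would run the same scheme in the $H$-adapted $H^1$ norm. By the equivalence \eqref{2.2} with $s=1$, $r=2$, convergence in $H^1$ is equivalent to convergence of $(1+H)^{1/2}u(t)$ in $L^2$; since $(1+H)^{1/2}$ commutes with $e^{itH}$, Cook's computation now produces the source $(1+H)^{1/2}Ve^{-isH_0}\phi$, whose $L^2$ norm is comparable to $\|Ve^{-isH_0}\phi\|_{H^1}$ again by \eqref{2.2}. The Leibniz rule splits the gradient as
\begin{equation*}
\nabla\!\left(Ve^{-isH_0}\phi\right)=(\nabla V)\,e^{-isH_0}\phi+V\,e^{-isH_0}\nabla\phi,
\end{equation*}
where I used that $\nabla$ commutes with $e^{-isH_0}=e^{is\Delta}$. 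The first term is handled exactly as in (i) but with $\nabla V\in L^{3/2}$ replacing $V$ — this is where the extra hypothesis $\nabla V\in L^{3/2}$ is consumed — and the second is the (i)-estimate applied to $\nabla\phi\in L^2$. Adding the three contributions and repeating the duality bound yields the Cauchy criterion in $H^1$.

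The only genuinely external input, and the step I expect to be the crux, is the perturbed Strichartz bound $\|e^{-isH}g\|_{L^2_sL^6_x}\lesssim\|g\|_{L^2}$ for $H=H_0+V$ under the Kato-class and smallness hypotheses \eqref{1.2}--\eqref{1.3}; this is exactly what the combination of Strichartz estimates recorded in \eqref{2.1} supplies, so once it is invoked the remainder is elementary Hölder/duality bookkeeping. The second delicate point is conceptual rather than technical: replacing the naive $L^1_s$-integrability of the source with the duality pairing, which trades one power of time-decay for the square-integrability furnished by the endpoint-type admissible pair $(2,6)$.
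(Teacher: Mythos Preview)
Your argument is correct and follows the standard Cook's method route. Note, however, that the paper does not itself prove this lemma; it is cited from Hong \cite{H}, and the analogous computations the paper carries out later (see \eqref{5.16}, \eqref{5.23}--\eqref{5.24}, \eqref{5.28}--\eqref{5.29}) proceed slightly more directly: rather than splitting $V=|V|^{1/2}\mathrm{sgn}(V)|V|^{1/2}$ and pairing against a test function $g$, one applies the inhomogeneous Strichartz estimate contained in \eqref{2.1} straight to the Duhamel integral,
\[
\Big\|\int_{t'}^{t}e^{isH}Ve^{-isH_0}\phi\,ds\Big\|_{L^2}
\lesssim \|Ve^{-isH_0}\phi\|_{L^2_{[t',t]}L^{6/5}_x}
\le \|V\|_{L^{3/2}}\|e^{-isH_0}\phi\|_{L^2_{[t',t]}L^6_x}\to 0,
\]
which is exactly the content of your duality computation but packaged as a single invocation of the dual endpoint. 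Your route and theirs are equivalent in substance; the only difference is that you re-derive the dual Strichartz bound by hand, while the paper (and Hong) treat it as a black box already recorded in \eqref{2.1}. For part (ii) both approaches coincide: pass $(1+H)^{1/2}$ through the propagator, invoke the norm equivalence \eqref{2.2}, and split $\nabla(Ve^{-isH_0}\phi)$ by Leibniz, consuming $\nabla V\in L^{3/2}$ on the first piece and $\nabla\phi\in L^2$ on the second.
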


We note that the statement and the proof of the following local theory are similar to
those for $({\rm INLS}_{0})$ (see Corollary 1.6 in Guzm$\rm\acute{a}$n \cite{Guz}). The only difference
in the proof is that the norm equivalence is used in several steps.

\begin{lemma}\label{lem2.2} If $V$ satisfies \eqref{1.2} and
\eqref{1.3}, and $u_{0}\in H^{1}({\bf R}^{3})$, then initial value problem
\eqref{1.1} ${\rm INLS_{V}}$ is locally well-posed in $H^{1}({\bf R}^{3})$ and
$$
u\in C([-T, T], H^{1}({\bf R}^{3}))\cap L^{q}([-T, T], H^{1, r}({\bf R}^{3})))
$$
for any $(q, r)$ $L^{2}$-admissible.
\end{lemma}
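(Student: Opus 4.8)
The plan is to solve the Duhamel (integral) formulation of \eqref{1.1} by the contraction mapping principle, following the argument of Guzm\'an \cite{Guz} for $({\rm INLS}_0)$ and replacing the free objects by their $H$-associated counterparts by means of the Strichartz estimates \eqref{2.1} and the norm equivalences \eqref{2.2}. Writing $G(u)=|x|^{-b}|u|^2u$, a solution is a fixed point of
$$\Phi(u)(t)=e^{-itH}u_0+i\int_0^t e^{-i(t-s)H}G(u)(s)\,ds,$$
and I would set up the fixed point in a closed ball of the Banach space with norm $\|u\|_{X_T}=\|u\|_{S(L^2,I)}+\|\langle\nabla\rangle u\|_{S(L^2,I)}$, $I=[-T,T]$, equipped with the weaker metric $d(u,v)=\|u-v\|_{S(L^2,I)}$. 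Since $e^{-itH}$ commutes with $\langle H\rangle^{1/2}$, the equivalence \eqref{2.2} lets me pass freely between $\langle\nabla\rangle$ and $\langle H\rangle^{1/2}$, so that \eqref{2.1} at $s=0$ (together with its $\langle\nabla\rangle$-differentiated version) controls $\|\Phi(u)\|_{X_T}$ by $\|u_0\|_{H^1}+\|G(u)\|_{S'(L^2,I)}+\|\langle\nabla\rangle G(u)\|_{S'(L^2,I)}$.

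The heart of the matter is the nonlinear estimate: I must bound $\|G(u)\|_{S'(L^2,I)}$ and $\|\langle\nabla\rangle G(u)\|_{S'(L^2,I)}$ by $\|u\|_{X_T}^3$ with a positive power of $T$ gained from H\"older in time, and prove the companion difference estimate $\|G(u)-G(v)\|_{S'(L^2,I)}\lesssim(\|u\|_{X_T}^2+\|v\|_{X_T}^2)\,d(u,v)$. To handle the singular weight I would split $\mathbb{R}^3=B\cup B^c$ with $B=\{|x|\le1\}$. On $B^c$ one has $|x|^{-b}\le1$ and $|\nabla|x|^{-b}|\lesssim|x|^{-b-1}\le1$, so the estimates reduce to the standard cubic-NLS Strichartz estimates. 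On $B$ the weight is singular but locally integrable; choosing a Strichartz- (or $\dot H^{s_c}$-) admissible pair $(q,r)$ for which $|x|^{-b}\in L^\gamma(B)$ -- which requires $\gamma b<3$ and is possible precisely because $0<b<1$ leaves enough room in the admissibility constraints -- I apply H\"older together with the fractional product and chain rules to distribute $\langle\nabla\rangle$ over $|x|^{-b}|u|^2u$. The most delicate term is the one where the derivative falls on the weight, producing $|x|^{-b-1}$; here I would either invoke Hardy's inequality or, preferably, choose the admissible exponent so that $|x|^{-b-1}\in L^\gamma(B)$ with $\gamma(b+1)<3$ (again using $b<1$, hence $b+1<2<3$) and absorb the extra power of $|x|$ via a Sobolev embedding. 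This region decomposition, and the choice of admissible pairs compatible with the local integrability of the weight, constitute the main obstacle and is exactly where the range of $b$ enters.

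With these estimates in hand, a standard argument closes the proof. For $T$ small enough depending on $\|u_0\|_{H^1}$ (through the gained power of $T$), $\Phi$ maps the ball $\{\|u\|_{X_T}\le 2C\|u_0\|_{H^1}\}$ into itself and is a contraction in the metric $d$; the Banach fixed point theorem then yields a unique solution $u\in X_T$, and the $L^2$-level difference estimate gives uniqueness in the larger class and continuous dependence on the data. Finally, $u\in C([-T,T];H^1(\mathbb{R}^3))$ follows from \eqref{2.1} applied to the Duhamel terms together with a density and continuity argument, while membership in $L^q([-T,T];H^{1,r}(\mathbb{R}^3))$ for every $L^2$-admissible $(q,r)$ is immediate from the definition of $\|\cdot\|_{S(L^2,I)}$ and the norm equivalence \eqref{2.2}. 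The only departures from Guzm\'an \cite{Guz} are the systematic use of \eqref{2.1} and \eqref{2.2} to replace $e^{-it\Delta}$ and $|\nabla|^s$ by $e^{-itH}$ and $\langle H\rangle^{s/2}$, exactly as indicated before the statement.
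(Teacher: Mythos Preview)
Your proposal is correct and follows exactly the approach the paper indicates: the paper does not give a detailed proof but simply remarks that the argument is the same as Guzm\'an's Corollary~1.6 for $({\rm INLS}_0)$, with the only modification being the use of the norm equivalence \eqref{2.2} (together with the Strichartz estimates \eqref{2.1} for $e^{-itH}$) at the appropriate steps. Your outline of the contraction-mapping argument, the $B/B^c$ decomposition to handle the weight, and the passage between $\langle\nabla\rangle$ and $\langle H\rangle^{1/2}$ is precisely this.
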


\vskip0.5cm

Before we show the small data scattering theory, we need three crucial estimates,
which is the key to upgrade the range of $b$ from $0<b<\frac{1}{2}$ to $0<b<1$. In \cite{FG1, FG2}, the
authors divided ${\bf R}^{3}$ into two parts: unit ball $B$ and its complement $B^{C}$ to consider them separately.
However, here we shall rely on the Sobolev inequality (see Theorem $B^*$ in Stein-Weiss \cite{SW})
to get them.

\begin{lemma}\label{lem2.3} Let $u: I\times {\bf R}^{3}\rightarrow {\bf C}$ be a complex function, then the following estimates hold.

(i)
\begin{align}\label{2.5}
\|\nabla (|x|^{-b}|u|^{2}u)\|_{S'(L^{2}, I)}
\lesssim \big\||\nabla|^{s_{c}}u\big\|_{S(L^{2}, I)}\|\nabla u\|_{S(L^{2}, I)}\|u\|_{S(\dot{H}^{s_{c}}, I)}
\lesssim \|\nabla u\|_{S(L^{2}, I)}^{1+s_{c}}\|u\|_{S(L^{2}, I)}^{1-s_{c}}\|u\|_{S(\dot{H}^{s_{c}}, I)},
\end{align}

(ii)
\begin{align}\label{2.6}
\big\||x|^{-b}|u|^{2}u\big\|_{S'(L^{2}, I)}
\lesssim \big\||\nabla|^{s_{c}}u\big\|_{S(L^{2})}\|u\|_{S(L^{2}, I)}
\|u\|_{S(\dot{H}^{s_{c}}, I)}
\lesssim \|\nabla u\|_{S(L^{2}, I)}^{s_{c}}\|u\|_{S(L^{2}, I)}^{2-s_{c}}\|u\|_{S(\dot{H}^{s_{c}}, I)}
\end{align}
and

(iii)
\begin{align}\label{2.7}
\big\||x|^{-b}|u|^{2}u\big\|_{S'(\dot{H}^{-s_{c}}, I)}
\lesssim \big\||\nabla|^{s_{c}} u\big\|_{S(L^{2}, I)}\|u\|_{S(\dot{H}^{s_{c}}, I)}^{2}.
\end{align}
\end{lemma}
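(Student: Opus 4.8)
The plan is to estimate each trilinear expression by splitting the weight $|x|^{-b}$ into its contributions near the origin ($|x|<1$) and away from it ($|x|\ge 1$), and on each region apply the Stein--Weiss (weighted Hardy--Littlewood--Sobolev / weighted Sobolev) inequality quoted as Theorem $B^*$ in \cite{SW} to absorb the singular weight into a fractional-derivative loss, then close with H\"older in space and time and the definitions of the Strichartz norms $S(L^2,I)$, $S'(L^2,I)$, $S(\dot H^{s_c},I)$, $S'(\dot H^{-s_c},I)$. Throughout, $s_c=\frac{1+b}{2}$ for $N=3$, $p=3$, so $0<s_c<1$ and the exotic admissible range $\Lambda_{s_c}$ is nonempty; the point is to choose, on each region, an admissible pair whose spatial exponent matches what Stein--Weiss demands.

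Concretely, for part (i) I would first write $\nabla(|x|^{-b}|u|^2u) = |x|^{-b}\nabla(|u|^2u) + \nabla(|x|^{-b})\,|u|^2u$, with $|\nabla(|x|^{-b})|\lesssim |x|^{-b-1}$. The first term is handled by the fractional Leibniz / chain rule to reduce $\nabla(|u|^2u)$ to $|\nabla u|\,|u|^2$ (in appropriate norms), then in the region $|x|<1$ use Stein--Weiss to trade $|x|^{-b}$ for $|\nabla|^{s_c}$-type smoothing on one factor, and in $|x|\ge 1$ just bound $|x|^{-b}\le 1$ and use unweighted Strichartz; the second term, with the worse weight $|x|^{-b-1}$, is the one that forces the exotic norm $\|u\|_{S(\dot H^{s_c},I)}$ — near the origin one applies Stein--Weiss with the $\dot H^{s_c}$-admissible exponent to convert $|x|^{-b-1}$ into exactly $s_c$ derivatives (using $b+1 = 2s_c$ when... actually $2s_c = 1+b$, so $b+1$ matches $2s_c$ up to the scaling bookkeeping), landing one factor in an $\dot H^{s_c}$-admissible $L^q_tL^r_x$ norm and the other two in $L^2$-admissible norms, which after H\"older gives the first displayed bound; the second inequality in \eqref{2.5} then follows by interpolating $\||\nabla|^{s_c}u\|_{S(L^2,I)} \le \|\nabla u\|_{S(L^2,I)}^{s_c}\|u\|_{S(L^2,I)}^{1-s_c}$ (Gagliardo--Nirenberg / interpolation of the Strichartz norm) and absorbing one extra derivative from $\nabla u$. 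Parts (ii) and (iii) are the same computation without the outer gradient: for (ii) I pair $|x|^{-b}|u|^2u$ against an $L^{2}$-dual admissible pair, put $|x|^{-b}$ on one factor via Stein--Weiss to get $|\nabla|^{s_c}u$, another factor in $S(\dot H^{s_c})$, the third in $S(L^2)$; for (iii) I pair against an $\dot H^{-s_c}$-dual admissible pair, which is precisely tuned so that $|x|^{-b}$ produces $s_c$ derivatives on one factor and leaves the other two in $S(\dot H^{s_c})$, giving $\||\nabla|^{s_c}u\|_{S(L^2,I)}\|u\|_{S(\dot H^{s_c},I)}^2$.

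The main obstacle is the admissibility bookkeeping in the region near the origin: one must verify that the spatial exponents demanded by Stein--Weiss (so that $|x|^{-b}$, or $|x|^{-b-1}$ in the Leibniz term of (i), is exactly compensated) actually correspond to pairs lying in $\Lambda_{s_c}$, $\Lambda_{-s_c}$, or the $L^2$-admissible set, and that the resulting time exponents are H\"older-compatible; this is where the restriction to $N=3$, $p=3$, $0<b<1$ is used, and where the exotic norms $S(\dot H^{\pm s_c})$ are unavoidable because a pure $L^2$-admissible estimate cannot absorb the singularity. A secondary technical point is justifying the fractional Leibniz rule for $\nabla(|u|^2u)$ in the mixed space-time norms and handling the low regularity of $|u|^2u$ (the nonlinearity is $C^1$ but not $C^2$), which is standard and follows as in Guzm\'an \cite{Guz} and Farah--Guzm\'an \cite{FG1}. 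Once the region decomposition and exponent choices are fixed, the remaining steps are routine H\"older and interpolation.
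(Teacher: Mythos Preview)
Your strategy of splitting into $|x|<1$ and $|x|\ge 1$ and bounding $|x|^{-b}\le 1$ on the exterior is precisely what the paper \emph{avoids}, and it is a genuine obstruction here. On $|x|\ge 1$, discarding the weight destroys the scaling of the estimate: the unweighted cubic expression $|u|^2 u$ (or $|u|^2\nabla u$) cannot be controlled by the homogeneous right-hand side $\||\nabla|^{s_c}u\|_{S(L^2)}\|\nabla u\|_{S(L^2)}\|u\|_{S(\dot H^{s_c})}$, since a scaling count shows the two sides differ by exactly the $b$ units of decay you threw away. The region-splitting route (with the weight placed in a Lebesgue space on $|x|<1$ and bounded on $|x|\ge 1$) is the one taken in \cite{Guz,FG1}, and it is what forces the restriction $b<\tfrac12$ there; the whole point of this lemma is to remove that restriction, so your exterior step would either reintroduce it or produce only an inhomogeneous bound that is not the one stated.

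The paper applies Stein--Weiss \emph{globally}, with no region decomposition. For the $|x|^{-b-1}$ term coming from $\nabla|x|^{-b}$, one writes the weight as $|x|^{-s_c}\cdot|x|^{-s_c}$ (using $2s_c=1+b$), places one copy on each of two factors of $u$, and converts them via Hardy $\||x|^{-s_c}u\|_{L^3_x}\lesssim\||\nabla|^{s_c}u\|_{L^3_x}$ and Stein--Weiss $\||x|^{-s_c}u\|_{L^3_x}\lesssim\|\nabla u\|_{L^{6/(3-b)}_x}$, leaving the third factor in $L^6_x$. The admissible pairs are then $(4,3),\,(\tfrac{4}{b},\tfrac{6}{3-b})\in\Lambda_0$ and $(\tfrac{4}{1-b},6)\in\Lambda_{s_c}$, which one checks are H\"older-compatible for $L^2_IL^{6/5}_x$ and valid for all $0<b<1$. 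The $|x|^{-b}\nabla(|u|^2u)$ piece and parts (ii), (iii) are handled the same way, with a single global application of Stein--Weiss such as $\||x|^{-b}u\|_{L^{6/(1+b)}_x}\lesssim\||\nabla|^{s_c}u\|_{L^3_x}$ or $\||x|^{-b}u\|_{L^{6/(2+b)}_x}\lesssim\||\nabla|^{s_c}u\|_{L^2_x}$. The final interpolation $\||\nabla|^{s_c}u\|_{S(L^2)}\le\|\nabla u\|_{S(L^2)}^{s_c}\|u\|_{S(L^2)}^{1-s_c}$ is exactly as you said. In short, your identification of Stein--Weiss as the key tool is correct; the fix is to use it on all of $\mathbf{R}^3$ and drop the decomposition entirely.
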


\begin{proof}
We first recall the Sobolev inequality.

\begin{lemma}\label{lem2.4}
Let $1< p\leq q'<\infty$, $N\geq 1$, $0<s<N$, and $\alpha, \beta\in{\bf R}$ obey the
conditions
\begin{align*}
\alpha>-\frac{N}{p'},
\end{align*}
\begin{align*}
\beta>-\frac{N}{q'},
\end{align*}
\begin{align*}
\alpha+\beta\leq 0
\end{align*}
and the scaling condition
\begin{align*}
\alpha+\beta-N+s=-\frac{N}{p'}-\frac{N}{q'}.
\end{align*}
Then for any
$u: {\bf R}^{N}\rightarrow {\bf C}$, we have
\begin{align}\label{2.8}
\big\||x|^{\beta}u\big\|_{L^{q'}({\bf R}^{N})}\lesssim_{\alpha,\beta, p, q, s}
\big\||x|^{-\alpha}|\nabla|^{s}u\big\|_{L^{p}({\bf R}^{N})}.
\end{align}
\end{lemma}
Next we give the proof of \eqref{2.5}. Using Leibnitz rule gives
\begin{align}\label{2.9}
\|\nabla (|x|^{-b}|u|^{2}u)\|_{S'(L^{2}, I)}
\lesssim \big\||x|^{-b-1}|u|^{2}u\big\|_{S'(L^{2}, I)}+\big\||x|^{-b}\nabla(|u|^{2}u)\big\|_{S'(L^{2}, I)}.
\end{align}
To control $\big\||x|^{-b-1}|u|^{2}u\big\|_{S'(L^{2})}$, it follows from the definition of
$S'(L^{2})$ and H\"{o}lder inequality that
\begin{equation}\label{2.10}
\begin{split}
\big\||x|^{-b-1}|u|^{2}u\big\|_{S'(L^{2}, I)}
&\lesssim \big\||x|^{-b-1}|u|^{2}u\big\|_{L_{I}^{2}L_{x}^{\frac{6}{5}}}\\
&\lesssim \Big\|\big\||x|^{-s_{c}}u\big\|_{L_{x}^{3}}\big\||x|^{-s_{c}}u\big\|_{L_{x}^{3}}\|u\|_{L_{x}^{6}}\Big\|_{L_{I}^{2}}.
\end{split}
\end{equation}
Using Hardy inequality yields that
\begin{align}\label{2.11}
\big\||x|^{-s_{c}}u\Big\|_{L_{x}^{3}}\lesssim \big\||\nabla|^{s_{c}}u\Big\|_{L_{x}^{3}},
\end{align}
and using \eqref{2.8} with $\beta=-s_{c}$, $q'=3$, $\alpha=0$, $s=1$ and $p=\frac{6}{3-b}$ gives
\begin{align}\label{2.12}
\big\||x|^{-s_{c}}u\big\|_{L_{x}^{3}}\lesssim \big\||\nabla|u\big\|_{L_{x}^{\frac{6}{3-b}}}.
\end{align}
Substituting \eqref{2.11} and \eqref{2.12} in the \eqref{2.10}, using H\"{o}lder inequality in
the time variable $t$ and noting that $(4, 3), (\frac{4}{b}, \frac{6}{3-b})\in \Lambda_{0}$
and $(\frac{4}{1-b}, 6)\in \Lambda_{s_{c}}$, we have
\begin{equation}\label{2.13}
\begin{split}
\big\||x|^{-b-1}|u|^{2}u\big\|_{S'(L^{2}, I)}
&\lesssim \Big\|\big\||\nabla|^{s_{c}}u\big\|_{L_{x}^{3}}\big\||\nabla|u\big\|_{L_{x}^{\frac{6}{3-b}}}\|u\|_{L_{x}^{6}}\Big\|_{L_{I}^{2}}\\
&\lesssim \big\||\nabla|^{s_{c}}u\big\|_{L_{I}^{4}L_{x}^{3}}\big\||\nabla|u\big\|_{L_{I}^{\frac{4}{b}}L_{x}^{\frac{6}{3-b}}}
\|u\|_{L_{I}^{\frac{4}{1-b}}L_{x}^{6}}\\
&\lesssim \big\||\nabla|^{s_{c}}u\big\|_{S(L^{2}, I)}\big\||\nabla|u\big\|_{S(L^{2}, I)}
\|u\|_{S(\dot{H}^{s_{c}}, I)}\\
&\lesssim \|\nabla u\|_{S(L^{2}, I)}^{1+s_{c}}\|u\|_{S(L^{2}, I)}^{1-s_{c}}\|u\|_{S(\dot{H}^{s_{c}}, I)},
\end{split}
\end{equation}
where in the last step we have used the interpolation.

To control $\big\||x|^{-b}\nabla(|u|^{2}u)\big\|_{S'(L^{2})}$, we apply Leibnitz rule and H\"{o}lder inequality to get
\begin{equation}\label{2.14}
\begin{split}
\big\||x|^{-b}\nabla(|u|^{2}u)\big\|_{S'(L^{2}, I)}
&\lesssim \big\||x|^{-b}u^{*}u^{*}\nabla u^{*}\big\|_{L_{I}^{2}L_{x}^{\frac{6}{5}}}\\
&\lesssim \Big\|\big\||x|^{-b}u^{*}\big\|_{L_{x}^{\frac{6}{1+b}}}\|u^{*}\|_{L_{x}^{6}}\|\nabla u^{*}\|_{L_{x}^{\frac{6}{3-b}}}\Big\|_{L_{I}^{2}}\\
&\lesssim \Big\|\big\||x|^{-b}u\big\|_{L_{x}^{\frac{6}{1+b}}}\|u\|_{L_{x}^{6}}\|\nabla u\|_{L_{x}^{\frac{6}{3-b}}}\Big\|_{L_{I}^{2}}
\end{split}
\end{equation}
where $u^{*}$ is either $u$ or $\bar{u}$.
By \eqref{2.8} with $\beta=-b$, $q'=\frac{6}{1+b}$, $\alpha=0$, $s=s_{c}=\frac{1+b}{2}$ and $p=3$, we have
\begin{align}\label{2.15}
\big\||x|^{-b}u\Big\|_{L_{x}^{\frac{6}{1+b}}}\lesssim \big\||\nabla|^{s_{c}}u\Big\|_{L_{x}^{3}}.
\end{align}
Substituting \eqref{2.15} in the \eqref{2.14}, using H\"{o}lder inequality in
the time variable $t$ and noting that $(4, 3), (\frac{4}{b}, \frac{6}{3-b})\in \Lambda_{0}$
and $(\frac{4}{1-b}, 6)\in \Lambda_{s_{c}}$, we have
\begin{equation}\label{2.16}
\begin{split}
\big\||x|^{-b}\nabla(|u|^{2}u)\big\|_{S'(L^{2}, I)}
&\lesssim\Big\|\big\||\nabla|^{s_{c}}u\big\|_{L_{x}^{3}}\big\||\nabla|u\big\|_{L_{x}^{\frac{6}{3-b}}}\|u\|_{L_{x}^{6}}\Big\|_{L_{I}^{2}}\\
&\lesssim \big\||\nabla|^{s_{c}}u\big\|_{L_{I}^{4}L_{x}^{3}}\big\||\nabla|u\big\|_{L_{I}^{\frac{4}{b}}L_{x}^{\frac{6}{3-b}}}
\|u\|_{L_{I}^{\frac{4}{1-b}}L_{x}^{6}}\\
&\lesssim \big\||\nabla|^{s_{c}}u\big\|_{S(L^{2}, I)}\big\||\nabla|u\big\|_{S(L^{2}, I)}
\|u\|_{S(\dot{H}^{s_{c}}, I)}\\
&\lesssim \|\nabla u\|_{S(L^{2}, I)}^{1+s_{c}}\|u\|_{S(L^{2}, I)}^{1-s_{c}}\|u\|_{S(\dot{H}^{s_{c}}, I)},
\end{split}
\end{equation}
where in the last step we have used the interpolation.

Putting \eqref{2.9}, \eqref{2.13} and \eqref{2.16} together, we complete the proof of \eqref{2.5}.

From the process for \eqref{2.14}-\eqref{2.16}, we easily obtain that
\begin{equation}\label{2.17}
\begin{split}
\big\||x|^{-b}|u|^{2}u\big\|_{S'(L^{2}, I)}
&\lesssim \big\||\nabla|^{s_{c}}u\big\|_{S(L^{2})}\|u\|_{S(L^{2}, i)}
\|u\|_{S(\dot{H}^{s_{c}}, I)}\\
&\lesssim \|\nabla u\|_{S(L^{2}, I)}^{s_{c}}\|u\|_{S(L^{2}, i)}^{2-s_{c}}\|u\|_{S(\dot{H}^{s_{c}}, i)},
\end{split}
\end{equation}

Finally, we turn to the estimate of \eqref{2.7}.
we apply Leibnitz rule and H\"{o}lder inequality to get
\begin{equation}\label{2.18}
\begin{split}
\big\||x|^{-b}|u|^{2}u\big\|_{S'(\dot{H}^{-s_{c}}, I)}
&\lesssim \big\||x|^{-b}u^{*}u^{*} u^{*}\big\|_{L_{I}^{\frac{4}{1-b}}L_{x}^{\frac{6}{5}}}\\
&\lesssim \Big\|\big\||x|^{-b}u^{*}\big\|_{L_{x}^{\frac{6}{2+b}}}\|u^{*}\|_{L_{x}^{6}}\| u^{*}\|_{L_{x}^{\frac{6}{2-b}}}\Big\|_{L_{I}^{2}}\\
&\lesssim \Big\|\big\||x|^{-b}u\big\|_{L_{x}^{\frac{6}{2+b}}}\|u\|_{L_{x}^{6}}\| u\|_{L_{x}^{\frac{6}{2-b}}}\Big\|_{L_{I}^{2}}.
\end{split}
\end{equation}
By \eqref{2.8} with $\beta=-b$, $q'=\frac{6}{2+b}$, $\alpha=0$, $s=s_{c}=\frac{1+b}{2}$ and $p=2$, we have
\begin{align}\label{adding}
\big\||x|^{-b}u\big\|_{L_{x}^{\frac{6}{2+b}}}\lesssim \big\||\nabla|^{s_{c}}u\big\|_{L_{x}^{2}}.
\end{align}
Substituting \eqref{adding} in the \eqref{2.18}, using H\"{o}lder inequality in
the time variable $t$ and noting that $(\infty, 2)\in \Lambda_{0}$
and $(\frac{4}{1-b}, 6), (\infty, \frac{6}{2-b})\in \Lambda_{s_{c}}$, we have
\begin{equation}\label{2.19}
\begin{split}
\big\||x|^{-b}|u|^{2}u\big\|_{S'(\dot{H}^{-s_{c}}, I)}
&\lesssim \Big\|\big\||\nabla|^{s_{c}}u\big\|_{L_{x}^{2}}\|u\|_{L_{x}^{6}}\|u\|_{L_{x}^{\frac{6}{2-b}}}\Big\|_{L_{t}^{2}}\\
&\lesssim \big\||\nabla|^{s_{c}}u\big\|_{L_{I}^{\infty}L_{x}^{2}}\|u\|_{L_{I}^{\frac{4}{1-b}}L_{x}^{6}}
\|u\|_{L_{I}^{\infty}L_{x}^{\frac{6}{2-b}}}\\
&\lesssim \big\||\nabla|^{s_{c}}u\big\|_{S(L^{2}, I)}
\|u\|_{S(\dot{H}^{s_{c}}, I)}^{2}.
\end{split}
\end{equation}
\end{proof}

Once obtaining nonlinear estimates \eqref{2.5}, \eqref{2.6} and \eqref{2.7}, we can follow the standard procedure (e.g.
see \cite{HR, H, FG1}) and use Kato inhomogeneous Strichartz estimates \eqref{2.1} and the normal equivalence \eqref{2.2}
to get the following small data global wellposedness Proposition \ref{pro2.5}, scattering criterion Proposition \ref{pro2.6}
and stability result Lemma \ref{lem2.9}, so we omit their proofs here. It is easy to see that combining Proposition \ref{pro2.5} with
Proposition \ref{pro2.6} yields a small data scattering result.

\begin{proposition}\label{pro2.5}
Let $V$ satisfy \eqref{1.2} and
\eqref{1.3}.
Assume $u_{0}\in H^1({\bf R}^{3})$ and $\|u_{0}\|_{H^{1}}\leq A$. Then
there exists  $\delta_{sd}>0$ depending on $A$ such that if
 $\|e^{-itH}u_{0}\|_{S(\dot{H}^{s_{c}})}\leq \delta_{sd}, $ then there exists a unique global solution
 $u$ of \eqref{1.1} with initial data $u_0$. Furthermore,
\begin{align}\label{2.20}
 \|u\|_{S(\dot{H}^{s_{c}})}\leq
2\|e^{-itH}u_{0}\|_{S(\dot{H}^{s_{c}})},\ \ \ \|\langle \nabla\rangle u\|_{S(L^{2})}\leq 2c\|u_0\|_{H^1}.
\end{align}
\end{proposition}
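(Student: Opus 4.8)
The plan is a Banach fixed point argument for the Duhamel operator
\[
\Phi(u)(t)=e^{-itH}u_{0}-i\int_{0}^{t}e^{-i(t-s)H}\big(|x|^{-b}|u|^{2}u\big)(s)\,ds ,
\]
carried out directly on $I=\mathbb{R}$, so that the fixed point is automatically the claimed global solution. Let $c$ be the constant appearing in the Strichartz estimates of Section~\ref{sec-2}, and set
\[
B=\Big\{u:\ \|u\|_{S(\dot H^{s_{c}})}\le 2\|e^{-itH}u_{0}\|_{S(\dot H^{s_{c}})},\ \ \|\langle\nabla\rangle u\|_{S(L^{2})}\le 2c\|u_{0}\|_{H^{1}}\Big\},
\]
which is complete for the metric $d(u,v)=\|u-v\|_{S(\dot H^{s_{c}})}$ (by the usual weak-compactness argument). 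I will show that for $\delta_{sd}=\delta_{sd}(A)$ small enough $\Phi:B\to B$ is a $\tfrac12$-contraction.

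\emph{$\Phi$ maps $B$ into itself.} For the exotic norm, apply \eqref{2.1} with $s=s_{c}$ together with the nonlinear estimate \eqref{2.7}:
\[
\|\Phi(u)\|_{S(\dot H^{s_{c}})}\le \|e^{-itH}u_{0}\|_{S(\dot H^{s_{c}})}+C\,\|\langle\nabla\rangle u\|_{S(L^{2})}\,\|u\|_{S(\dot H^{s_{c}})}^{2}.
\]
For the $H^{1}$-type norm, apply the $L^{2}$-admissible Strichartz estimate (i.e.\ \eqref{2.1} with $s=0$) to $\langle\nabla\rangle\Phi(u)$; since $\langle\nabla\rangle$ does not commute with $e^{-itH}$ one routes it through $(1+H)^{1/2}$, which does, and converts back via the norm equivalences \eqref{2.2}, exactly as in the proof of Lemma~\ref{lem2.2}. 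Combining \eqref{2.5}, \eqref{2.6} and using $\|\nabla u\|_{S(L^{2})},\|u\|_{S(L^{2})}\lesssim\|\langle\nabla\rangle u\|_{S(L^{2})}$ gives
\[
\|\langle\nabla\rangle\Phi(u)\|_{S(L^{2})}\le c\|u_{0}\|_{H^{1}}+C\,\|\langle\nabla\rangle u\|_{S(L^{2})}^{2}\,\|u\|_{S(\dot H^{s_{c}})}.
\]
Now insert the ball bounds $\|u\|_{S(\dot H^{s_{c}})}\le 2\|e^{-itH}u_{0}\|_{S(\dot H^{s_{c}})}\le 2\delta_{sd}$ and $\|\langle\nabla\rangle u\|_{S(L^{2})}\le 2c\|u_{0}\|_{H^{1}}\le 2cA$: every nonlinear contribution carries a factor $\|u\|_{S(\dot H^{s_{c}})}\lesssim\delta_{sd}$, and in the second estimate the quadratic factor $\|\langle\nabla\rangle u\|_{S(L^{2})}^{2}\lesssim\|u_{0}\|_{H^{1}}^{2}$ gives up exactly one power of $\|u_{0}\|_{H^{1}}$ to the linear term $c\|u_{0}\|_{H^{1}}$, which is why the smallness can be measured against $A$ only. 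Hence, choosing $\delta_{sd}=\delta_{sd}(A)$ with $CcA\,\delta_{sd}$ small enough, both right-hand sides are $\le 2\|e^{-itH}u_{0}\|_{S(\dot H^{s_{c}})}$ and $\le 2c\|u_{0}\|_{H^{1}}$ respectively, so $\Phi(B)\subset B$.

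\emph{Contraction and conclusion.} For $u,v\in B$, expand $|x|^{-b}(|u|^{2}u-|v|^{2}v)$ as a finite sum of terms trilinear in $u,\bar u,v,\bar v$ with exactly one factor equal to $u-v$ or $\overline{u-v}$; re-running the estimates \eqref{2.5}--\eqref{2.7} and using the ball bounds yields
\[
d(\Phi(u),\Phi(v))\le C\,(cA)\,(\delta_{sd})\,d(u,v)\le\tfrac12\,d(u,v)
\]
after shrinking $\delta_{sd}$ once more. The Banach fixed point theorem then produces a unique $u\in B$ solving \eqref{1.1} on all of $\mathbb{R}$, and \eqref{2.20} is built into the definition of $B$; uniqueness in the full energy class follows by combining this with the local theory of Lemma~\ref{lem2.2} on a maximal interval.

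\emph{Main obstacle.} The genuinely nonlinear work is already packaged in Lemma~\ref{lem2.3}, so what remains is essentially bookkeeping: one must keep the three norms $S(L^{2})$, $S(\dot H^{s_{c}})$, $S'(\dot H^{-s_{c}})$ aligned so that each nonlinear term retains a factor of the small quantity $\|u\|_{S(\dot H^{s_{c}})}$, and so that only a single uncompensated power of $\|u_{0}\|_{H^{1}}$ survives in the $L^{2}$-level estimate. The one place the potential truly intervenes is commuting $\langle\nabla\rangle$ through $e^{-itH}$: since $[\langle\nabla\rangle,H]\neq0$ one works with $(1+H)^{1/2}$ and the equivalences \eqref{2.2}, which for $s=1$ are only stated for $1<r<3$ whereas $L^{2}$-admissible exponents run up to $r=6$; this mismatch is bridged, as in Guzm\'an \cite{Guz}, by also invoking the fractional product and chain rules rather than commuting $\langle\nabla\rangle$ naively.
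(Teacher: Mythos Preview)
Your argument is essentially the paper's: the same Duhamel map $\Phi$, the same ball $B$, the same route through $(1+H)^{1/2}$ and the equivalences \eqref{2.2} for the $\langle\nabla\rangle$-level estimate, and the same nonlinear inputs \eqref{2.5}--\eqref{2.7}. The only noteworthy difference is the metric: the paper takes $d(u,v)=\|u-v\|_{S(L^{2})}+\|u-v\|_{S(\dot H^{s_{c}})}$, whereas you contract in $\|u-v\|_{S(\dot H^{s_{c}})}$ alone and recover completeness of $B$ by weak compactness. Your choice is slightly leaner (the difference estimate in $S'(\dot H^{-s_{c}})$ lets you place $u-v$ in an $S(\dot H^{s_{c}})$ slot and the remaining two factors in $\|\langle\nabla\rangle\cdot\|_{S(L^{2})}$ and $S(\dot H^{s_{c}})$), while the paper's choice avoids the weak-limit argument at the cost of checking one more norm. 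Your closing caveat about the range $1<r<3$ in \eqref{2.2} is a fair observation, but note that the paper simply applies \eqref{2.2} in \eqref{2.23} and \eqref{2.25} without further comment, so your proposed workaround via fractional product rules is extra caution rather than a divergence from the paper's line.
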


\begin{proposition}\label{pro2.6}
Let $V$ satisfy \eqref{1.2} and
\eqref{1.3},
and let $u(t)\in C({\bf R},H^1({\bf R}^{3}))$ be a radial solution of \eqref{1.1} such that $\sup_{t\in {\bf R}}\|u(t)\|_{H^{1}}<+\infty$,
If $\|u\|_{S(\dot{H}^{s_{c}})}<+\infty$,
then $u(t)$ scatters in $H^1({\bf R}^{3})$.
That is , there exists $\phi^{\pm}\in H^1({\bf R}^{3})$ such that
$$
\lim_{t\rightarrow\pm\infty}\|u(t)-e^{-itH}\phi^{\pm}\|_{H^1({\bf R}^{3})}=0.
$$
\end{proposition}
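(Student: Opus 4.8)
The plan is to prove Proposition \ref{pro2.6} by the standard two-step scheme for scattering: first upgrade the finite exotic Strichartz bound $\|u\|_{S(\dot H^{s_c})}<\infty$ together with the uniform $H^1$ bound to finiteness of the full (inhomogeneous) Strichartz norm $\|\langle\nabla\rangle u\|_{S(L^2)}$ on all of $\R$; then exploit this to show that $e^{itH}u(t)$ is Cauchy in $H^1$ as $t\to\pm\infty$ via the Duhamel formula.

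\textbf{Step 1: Upgrading to a global $L^2$-Strichartz bound.} Since $\|u\|_{S(\dot H^{s_c})}<\infty$, I partition $\R$ into finitely many intervals $I_j=[t_j,t_{j+1}]$ on each of which $\|u\|_{S(\dot H^{s_c},I_j)}\leq\eta$ for a small $\eta$ to be chosen. On such an interval, write $u(t)=e^{-i(t-t_j)H}u(t_j)+i\int_{t_j}^t e^{-i(t-s)H}|x|^{-b}|u|^2u(s)\wrt s$. Applying the Strichartz estimate \eqref{2.1} with $s=0$, the norm equivalence \eqref{2.2}, and the nonlinear estimates \eqref{2.5}--\eqref{2.6} from Lemma \ref{lem2.3}, I get
\begin{equation*}
\|\langle\nabla\rangle u\|_{S(L^2,I_j)}\lesssim \|u(t_j)\|_{H^1}+\|\langle\nabla\rangle u\|_{S(L^2,I_j)}\|\nabla u\|_{S(L^2,I_j)}^{s_c}\|u\|_{S(L^2,I_j)}^{1-s_c}\|u\|_{S(\dot H^{s_c},I_j)}.
\end{equation*}
Using $\sup_t\|u(t)\|_{H^1}<B$ to control $\|u(t_j)\|_{H^1}$ and interpolating $\|u\|_{S(L^2,I_j)}\lesssim\|\langle\nabla\rangle u\|_{S(L^2,I_j)}$, a continuity/bootstrap argument (the map $t\mapsto\|\langle\nabla\rangle u\|_{S(L^2,[t_j,t])}$ is continuous and small at $t=t_j$) forces $\|\langle\nabla\rangle u\|_{S(L^2,I_j)}\lesssim B$ provided $\eta$ is small enough depending on $B$. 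Summing the finitely many $I_j$ gives $\|\langle\nabla\rangle u\|_{S(L^2,\R)}<\infty$; in particular $\|u\|_{S(\dot H^{s_c},\R)}$ and all $L^2$-admissible norms of $\langle\nabla\rangle u$ are finite.

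\textbf{Step 2: Constructing the scattering states.} Define $\phi^+=u_0+i\int_0^\infty e^{isH}|x|^{-b}|u|^2u(s)\wrt s$, so that formally $e^{itH}u(t)-\phi^+=-i\int_t^\infty e^{isH}|x|^{-b}|u|^2u(s)\wrt s$. To see $\phi^+\in H^1$ and that the limit holds, apply $\langle\nabla\rangle$, the dual Strichartz estimate \eqref{2.1}, the norm equivalence \eqref{2.2} and \eqref{2.5}--\eqref{2.6} on the tail interval $[t,\infty)$:
\begin{equation*}
\|e^{itH}u(t)-\phi^+\|_{H^1}\lesssim \|\langle\nabla\rangle(|x|^{-b}|u|^2u)\|_{S'(L^2,[t,\infty))}\lesssim \|\langle\nabla\rangle u\|_{S(L^2,[t,\infty))}^{1+s_c}\|u\|_{S(L^2,[t,\infty))}^{1-s_c}\|u\|_{S(\dot H^{s_c},[t,\infty))}.
\end{equation*}
By Step 1 each of these norms is finite over $\R$, hence by dominated convergence the right-hand side tends to $0$ as $t\to+\infty$; the same computation with $[t,\infty)$ replaced by $\R$ shows $\phi^+\in H^1$. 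The argument for $t\to-\infty$ is identical, defining $\phi^-$ with the integral over $(-\infty,0]$. Finally, since $e^{-itH}$ is uniformly bounded on $H^1$ (by \eqref{2.2}), $\|u(t)-e^{-itH}\phi^\pm\|_{H^1}=\|e^{itH}u(t)-\phi^\pm\|_{H^1}\to0$, which is the claimed scattering.

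\textbf{Main obstacle.} The radial hypothesis in the statement plays no role here (it is needed only upstream, to \emph{produce} the hypothesis $\|u\|_{S(\dot H^{s_c})}<\infty$ via the concentration-compactness/rigidity argument of later sections); given that hypothesis, the proof is soft. The one genuinely delicate point is the bootstrap in Step 1: one must check that the nonlinear estimates of Lemma \ref{lem2.3} are structured so that the \emph{full} $\|\langle\nabla\rangle u\|_{S(L^2,I_j)}$ appears to the first power on the right (so that smallness of the remaining factor $\|u\|_{S(\dot H^{s_c},I_j)}\leq\eta$ can absorb it), while the potentially large factors $\|\nabla u\|_{S(L^2,I_j)}^{s_c}\|u\|_{S(L^2,I_j)}^{1-s_c}$ are controlled a priori by the uniform $H^1$ bound $B$ only \emph{after} the bootstrap closes — this is exactly why one argues on small exotic-norm subintervals and runs a continuity argument rather than a direct estimate. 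The splitting of $\R$ into finitely many such subintervals is legitimate precisely because $\|u\|_{S(\dot H^{s_c},\R)}<\infty$.
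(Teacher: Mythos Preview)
Your proof is correct and follows essentially the same approach as the paper's: decompose $\mathbb{R}$ into finitely many subintervals on which the exotic Strichartz norm is small, use the nonlinear estimates \eqref{2.5}--\eqref{2.6} together with Strichartz and a continuity argument to bootstrap $\|\langle\nabla\rangle u\|_{S(L^2,I_j)}\lesssim B$ on each, then construct $\phi^{\pm}$ via the Duhamel tail and show its $H^1$-norm vanishes using the same nonlinear bounds. Your remark that the radial assumption is not used in this proposition is also accurate.
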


\begin{lemma}\label{lem2.9}
Let $V$ satisfy \eqref{1.2} and
\eqref{1.3}. For any given $M>0, M'>0$ and $L>0$, there exists $\epsilon_{0}=\epsilon_{0}(M, M', L)$
and $c(M, M', L)>0$ such that for any $0<\epsilon<\epsilon_{0}$,
if $\tilde{u}_{0}, u_{0}\in H^{1}({\bf R}^{3})$ are radial functions and
$\tilde{u}=\tilde{u}(t, x)\in C(I, H^1({\bf R}^{3}))$ is a radial solution to
$$
 i\tilde{u}_{t}+H \tilde{u}-|x|^{-b}|\tilde{u}|^{2}\tilde{u}=e,
 $$
 with $\tilde{u}(0, x)=\tilde{u}_{0}\in H^{1}({\bf R}^{3})$ satisfying
 \begin{align}\label{2.72}
 \sup_{t\in I}\|\tilde{u}(t)\|_{H_{x}^{1}}\leq M\; \text{and}\;   \|\tilde{u}\|_{S(\dot{H}^{s_{c}}, I)}\leq L
 \end{align}
\begin{align}\label{2.73}
\|e^{-itH}(u_{0}-\tilde{u}_{0})\|_{S(\dot{H}^{s_{c}}, I)}\leq\epsilon\;\text{and}\;
\|u_{0}-\tilde{u}_{0}\|_{H^{1}}\leq M'
\end{align}
and
 \begin{align}\label{2.74}
 \|\langle\nabla\rangle e\|_{S'(L^{2}, I)}+\|e\|_{S'(\dot{H}^{-s_{c}}, I)}\leq \epsilon,
 \end{align}
then
there is a unique solution $u\in C(I, H^1({\bf R}^{3}))$ to \eqref{1.1} $({\rm INS}_{V})$
with $u(0, x)=u_{0}$ satisfying
\begin{align}\label{2.75}
 \|u-\tilde{u}\|_{S(\dot{H}^{s_{c}}, I)}\leq c(M, M', L)\epsilon
\end{align}
 and
\begin{align}\label{2.76}
  \|u\|_{S(\dot{H}^{s_{c}}, I)}+\|\langle\nabla\rangle u\|_{S(L^{2}, I)}\leq c(M, M', L).
\end{align}
 \end{lemma}

\vskip0.5cm

\section{Sharp Gagliardo-Nirenberg inequality}\label{sec-5}
\setcounter{equation}{0}

In this section, we consider a maximizer or maximizing sequence of the nonlinear functional
\begin{align}\label{3.1}
J_{V}(u)=\frac{\big\||x|^{-b} |u|^{4}\big\|_{L^{1}}}{\|u\|_{L^2}^{1-b}\Big(\|\nabla u\|_{L^2}^{2}
+\displaystyle\int_{{\bf R}^{3}}V|u|^{2}dx\Big)^{\frac{3+b}{2}}}
\end{align}
in the two cases $V_{-}=0$ and $V_{-}\neq 0$.
To make this precise, we define
$$
C_{GN}^{rad}=\sup\{J_{V}(u): u\in H^{1}({\bf{R}}^{3}), u\;\text{is radial and nonzero}\}
$$
and
$$
C_{GN}=\sup\{J_{V}(u): u\in H^{1}({\bf{R}}^{3}), u\;\text{ is nonzero}\}.
$$

It's known from \cite{Far} that for $V=0$, $J_{0}(u)$ attains its maximum $J_{0}$  at $u=Q(x)\geq 0$, which solves the equation \eqref{1.7} with $p=3$, and
\begin{align}\label{3.2}
J_{0}=J_{0}(Q)=\frac{\big\||x|^{-b}|Q|^{4}\big\|_{L^{1}}}{\|Q\|_{L^2}^{1-b}\|\nabla Q\|_{L^2}^{3+b}},
\end{align}
which together with the identities
\begin{align}\label{3.3}
\|\nabla Q\|_{L^2}^2=\frac{3+b}{1-b}\|Q\|_{L^{2}}^{2},\ \
\||x|^{-b}|Q|^{4}\|_{L^{1}}=\frac{4}{3+b}\|\nabla Q\|_{L^{2}}^{2},\ \
E_0(Q)=\frac{s_{c}}{3+b}\|\nabla Q\|_{L^{2}}^{2},
\end{align}
implies that the best constant of the Gagliardo-Nirenberg inequality
\begin{align}\label{3.4}
\big\||x|^{-b}|u|^{4}\big\|_{L^{1}}\leq C^0_{GN} \|u\|_{L^2}^{1-b}\|\nabla
u\|_{L^2}^{3+b}
\end{align}
is
\begin{align}\label{3.5}
 C^0_{GN}=J_{0}=\frac4{(3+b)\|Q\|_{L^2}^{2(1-s_{c})}\|\nabla
Q\|_{L^2}^{2s_{c}} }.
\end{align}

\begin{lemma}\label{lem3.1}
If $V\geq 0$, then $\{Q_{n}(x)\}_{n=1}^{\infty}$ is a maximizing
sequence for $J_{V}(u)$, where $Q_{n}(x)=\frac{1}{n^{\frac{2-b}{2}}}Q(\frac x n)$.
\end{lemma}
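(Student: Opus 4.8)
The plan is to exploit the fact that $V \geq 0$ makes the denominator of $J_V$ larger than the corresponding denominator with $V$ removed, so that $J_V(u) \leq J_0(u) \leq J_0 = C_{GN}^0$ for every nonzero $u \in H^1(\mathbb{R}^3)$; in particular $C_{GN}^{rad} \leq C_{GN} \leq J_0$. It then suffices to produce a radial sequence along which $J_V$ converges to $J_0$, and the natural candidate is a scaling of the (radial, nonnegative) ground state $Q$ that pushes its mass off to spatial infinity where the potential contributes negligibly. Taking $Q_n(x) = n^{-(2-b)/2} Q(x/n)$, I would first record how the three ingredients of $J_V$ transform under this scaling: by the change of variables $y = x/n$ one computes $\|Q_n\|_{L^2}^2 = n^{2-b}\|Q\|_{L^2}^2$, $\|\nabla Q_n\|_{L^2}^2 = n^{-b}\|\nabla Q\|_{L^2}^2$, and $\||x|^{-b}|Q_n|^4\|_{L^1} = n^{-b}\||x|^{-b}|Q|^4\|_{L^1}$. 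The precise exponents are chosen exactly so that the ratio $\||x|^{-b}|Q_n|^4\|_{L^1}\big/\big(\|Q_n\|_{L^2}^{1-b}\|\nabla Q_n\|_{L^2}^{3+b}\big)$ is independent of $n$ and equals $J_0$; this is just the scale invariance of $J_0$ reflecting the fact that the $\dot H^{s_c}$ norm is the invariant one.

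Next I would handle the potential term. Writing out $J_V(Q_n)$ with the above substitutions, the only $n$-dependence sits in the factor $\big(\|\nabla Q_n\|_{L^2}^2 + \int V |Q_n|^2\big)^{(3+b)/2}$ in the denominator, and after factoring out $\|\nabla Q_n\|_{L^2}^2 = n^{-b}\|\nabla Q\|_{L^2}^2$ one gets
\[
J_V(Q_n) = J_0 \left(1 + \frac{\int_{\mathbb{R}^3} V(x)|Q_n(x)|^2\,dx}{\|\nabla Q_n\|_{L^2}^2}\right)^{-\frac{3+b}{2}}.
\]
So everything reduces to showing that the quantity
\[
R_n := \frac{\int_{\mathbb{R}^3} V(x)|Q_n(x)|^2\,dx}{\|\nabla Q_n\|_{L^2}^2} = \frac{n^{2-b}\int V(x)|Q(x/n)|^2\,dx}{n^{-b}\|\nabla Q\|_{L^2}^2} = \frac{n^{2}\int V(x)|Q(x/n)|^2\,dx}{\|\nabla Q\|_{L^2}^2}
\]
tends to $0$ as $n \to \infty$ (here I used $\int V|Q_n|^2 = n^{-b}\cdot n^2 \int V(x)|Q(x/n)|^2 dx$ after the substitution; I would recheck the bookkeeping carefully in the writeup). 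Changing variables $x = ny$ in the numerator gives $n^2 \cdot n^3 \int V(ny)|Q(y)|^2 dy$, which looks like it grows — so the correct way to see the decay is to use $V \in L^{3/2}$ together with Hölder: $\int V |Q_n|^2 \leq \|V\|_{L^{3/2}} \|Q_n\|_{L^6}^2 = \|V\|_{L^{3/2}} \|Q_n\|_{L^6}^2$, and by Sobolev $\|Q_n\|_{L^6} \lesssim \|\nabla Q_n\|_{L^2} = n^{-b/2}\|\nabla Q\|_{L^2}$, so actually $\int V|Q_n|^2 \lesssim n^{-b}$. Wait — that makes $R_n \sim n^{-b}/n^{-b}$, which is bounded but not obviously small. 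The clean argument instead splits $V = V\mathbf{1}_{|x|\leq \Lambda} + V\mathbf{1}_{|x|>\Lambda}$: on $|x| > \Lambda$ the $L^{3/2}$ tail of $V$ is small, contributing $\eps \|\nabla Q_n\|_{L^2}^2$; on $|x|\leq \Lambda$ one uses that $\|Q_n\|_{L^\infty(|x|\leq\Lambda)}^2 = n^{-(2-b)}\|Q\|_{L^\infty}^2 \to 0$ while $\int_{|x|\leq\Lambda}|V| \leq \Lambda^{?}\|V\|_{L^{3/2}}$ is a fixed finite number, so that piece is $o(1)\cdot \|\nabla Q_n\|_{L^2}^{-2}\cdot(\text{fixed}) \to 0$ once divided by $\|\nabla Q_n\|_{L^2}^2 = n^{-b}\|\nabla Q\|^2_{L^2}$ — here one needs $n^{-(2-b)}/n^{-b} = n^{-2+2b} \to 0$, which holds since $b < 1$.

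The anticipated main obstacle is precisely this last estimate: getting the decay of $R_n \to 0$ honestly, balancing the $L^{3/2}$-smallness of the potential's tail against the sup-norm decay of $Q_n$ on compact sets, and verifying the exponent arithmetic so that the compact-set contribution genuinely vanishes after division by $\|\nabla Q_n\|_{L^2}^2$ — this is where the hypotheses $V \in L^{3/2}$ and $0 < b < 1$ both get used. Once $R_n \to 0$ is established, the display for $J_V(Q_n)$ gives $J_V(Q_n) \to J_0$, and combined with the universal bound $J_V(u) \leq J_0$ (valid since $V \geq 0$), together with the observation that each $Q_n$ is radial and nonzero, this shows $\{Q_n\}$ is a maximizing sequence for both $C_{GN}^{rad}$ and $C_{GN}$, and in particular $C_{GN}^{rad} = C_{GN} = J_0 = C_{GN}^0$. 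I would close by noting that the supremum is not attained: if some radial nonzero $u_*$ achieved $J_V(u_*) = J_0$, then from $J_V(u_*) \leq J_0(u_*) \leq J_0$ we would need $\int V |u_*|^2 = 0$, forcing $u_* \equiv 0$ on $\{V > 0\}$, which contradicts $u_*$ being an $H^1$ maximizer of $J_0$ (whose maximizers are, up to symmetries, the everywhere-positive $Q$) unless $V \equiv 0$ a.e.
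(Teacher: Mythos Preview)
Your approach is essentially the same as the paper's: use $V\ge 0$ to get $J_V(u)\le J_0(u)\le J_0(Q)$ for every nonzero $u$, then show $J_V(Q_n)\to J_0(Q)$, which forces $\{Q_n\}$ to be a maximizing sequence. The paper also reduces to showing that $\int n^{2}V(nx)|Q(x)|^{2}\,dx\to 0$ (which is exactly your $R_n\to 0$ after the change of variables).

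Two comments. First, your scaling bookkeeping is off: in three dimensions one has $\|Q_n\|_{L^2}^{2}=n^{1+b}\|Q\|_{L^2}^{2}$, $\|\nabla Q_n\|_{L^2}^{2}=n^{b-1}\|\nabla Q\|_{L^2}^{2}$, and $\||x|^{-b}|Q_n|^{4}\|_{L^1}=n^{b-1}\||x|^{-b}|Q|^{4}\|_{L^1}$, not the exponents you wrote. This does not affect the argument, since you (correctly) only use the scale invariance $J_0(Q_n)=J_0(Q)$, and your key ratio $R_n=\|\nabla Q\|_{L^2}^{-2}\int n^{2}V(ny)|Q(y)|^{2}\,dy$ is computed correctly. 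With the right exponents the compact-set piece in your splitting decays like $n^{-(2-b)}/n^{b-1}=n^{-1}$, so no restriction on $b$ is needed there.

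Second, your explicit tail/core splitting to prove $R_n\to 0$ is more detailed than the paper's. The paper only records the uniform H\"older bound $\int n^{2}V(nx)Q(x)^{2}\,dx\lesssim\|V\|_{L^{3/2}}\|\nabla Q\|_{L^2}^{2}$ and asserts the limit; your argument (choose $\Lambda$ so that $\|V\mathbf{1}_{|x|>\Lambda}\|_{L^{3/2}}<\varepsilon$, then use $\|Q_n\|_{L^\infty}\to 0$ on $|x|\le\Lambda$) is the natural way to justify this step and is correct. Your closing remark on non-attainment matches the paper's Remark.
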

\begin{proof}
it follows from \eqref{3.2}, \eqref{3.4} and \eqref{3.5} that
\begin{align}\label{3.6}
J_{0}(Q)\geq J_{0}(u).
\end{align}
On one hand,
\begin{align}\label{3.7}
\lim_{n\rightarrow\infty}J_{V}(Q_{n})=J_{0}(Q),
\end{align}
which follows from
\begin{align*}
\displaystyle\int_{{\bf R}^{N}}n^{2}V(nx)Q(x)^{2}dx\lesssim \|V\|_{L^{\frac{3}{2}}}\|Q\|_{L^{6}}^{2}
\lesssim \|V\|_{L^{\frac{3}{2}}}\|\nabla Q\|_{L^2}^{2}
\end{align*}
and
\begin{align}\label{3.8}
J_{V}(Q_{n})=\frac{\big\||x|^{-b} |Q|^{4}\big\|_{L^{1}}}{\|Q\|_{L^2}^{1-b}\Big(\|\nabla Q\|_{L^2}^{2}
+\displaystyle\int_{{\bf R}^{3}}n^{2}V(nx)|Q|^{2}dx\Big)^{\frac{3+b}{2}}}.
\end{align}
On the other hand, for $V\geq 0$, it is easy to see that for any $u\in H^1$,
\begin{align}\label{3.9}
J_{0}(u)\geq J_{V}(u)
\end{align}
Putting \eqref{3.6}, \eqref{3.7} and \eqref{3.9} together yields that for any $u\in H^1$
\begin{align}\label{3.10}
\lim_{n\rightarrow\infty}J_{V}(Q_{n})\geq J_{V}(u)
\end{align}
Thus, we get our desired result.
\end{proof} \vskip0.5cm

\begin{remark}\label{rem3.2}
(i) It follows from lemma 3.1 that there hold that $J_0(Q)=\lim_{n\rightarrow\infty}J_{V}(Q_{n})\geq J_V(u)$
for any $u$, which implies that with the same Gagliardo-Nirenberg constant  ($C_{GN}=C_{GN}^{0}$ \eqref{3.5}), there holds the following sharp inequality: 
\begin{align}\label{3.11}
\big\||x|^{-b}|u|^{4}\big\|_{L^{1}}\leq C_{GN}^{0} \|u\|_{L^2}^{1-b}\|H^{\frac{1}{2}}
u\|_{L^2}^{3+b}.
\end{align}
In the case when $V$ is nonegative and not zero a.e. on ${\bf R}^3$, the constant $C_{GN}=C_{GN}^{0}$ can never be attained. In fact, if not, then there exists some $\tilde Q\in H^1({\bf R}^3)$
such that $C_{GN}=J_V(\tilde Q)<J_0(\tilde Q)\leq J_0( Q)=C_{GN}^{0}=C_{GN}$, which is a contradiction.

(ii) Since $\{Q_{n}(x)\}_{n=1}^{\infty}$  is a radial sequence, the arguments in Lemma \ref{lem3.1} and (i) still work for radial functions. So we can find that $C_{GN}^{rad}=C_{GN}^{0}=C_{GN}$ and it is never attained,
which is different from the case that $V$ is an inverse-square potential $\frac{a}{|x|^{2}}$(see \cite{KMVZ}), where $a>\frac{1}{4}$. In \cite{KMVZ}, the authors showed that $C_{GN}^{rad}$ can be attained but
$C_{GN}$ cannot be and $C_{GN}^{rad}<C_{GN}$ provided that $a>0$.
 \end{remark}
 \medskip
When $V_-\neq0$, we also have that $C_{GN}^{rad}=C_{GN}$ which can be attained further.
More precisely, we have the following.

\begin{lemma}\label{lem3.5}
If $V$ is radially symmetric and $V_-\neq0$, then the sharp constant $C_{GN}^{rad}=C_{GN}$ can be attained by a radially symmetric  function $\mathcal{Q}$, that is,
there exists a maximizer $\mathcal{Q}$ for $J_{V}(u)$, where $\mathcal{Q}$ solves the elliptic equation
\begin{align}\label{3.33}
(-\Delta +V)\mathcal{Q}+w_{\mathcal{Q}}^{2}\mathcal{Q}-|x|^{-b}|\mathcal{Q}|^{2}\mathcal{Q}=0, \;
w_{\mathcal{Q}}=\frac{\sqrt{1-b}}{\sqrt{3+b}}\frac{\|H^{\frac{1}{2}}\mathcal{Q}\|_{L^{2}}}{\|\mathcal{Q}\|_{L^{2}}}.
\end{align}
Moreover, $\mathcal{Q}$ satisfies the Pohozhaev identities,
\begin{align}\label{3.34}
\|H^{\frac{1}{2}}\mathcal{Q}\|_{L^{2}}=\frac{3+b}{1-b}w_{\mathcal{Q}}^{2}\|\mathcal{Q}\|_{L^{2}},\;
\big\||x|^{-b}|\mathcal{Q}|^{4}\big\|_{L^{1}}=\frac{4}{1-b}w_{\mathcal{Q}}^{2}\|\mathcal{Q}\|_{L^{2}}^{2}.
\end{align}
\end{lemma}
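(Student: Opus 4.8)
The plan is to solve the variational problem defining $C_{GN}^{rad}$ by the direct method, using crucially that radial $H^1(\mathbb R^3)$ embeds \emph{compactly} into $L^p$ for $2<p<6$ and (because $b<1$) into the weighted space measuring $\||x|^{-b}|u|^4\|_{L^1}$, and then to read off the Euler--Lagrange equation and the identities \eqref{3.34} from the variational characterisation. Two preliminary observations will be used repeatedly. First, $J_V$ is invariant under $u\mapsto\mu u$ ($\mu\ne0$), so any maximiser is determined only up to this amplitude scaling, which we may later fix conveniently. Second, writing $B(u):=\|\nabla u\|_{L^2}^2+\int_{\mathbb R^3}V|u|^2\,dx=\|H^{1/2}u\|_{L^2}^2$, the hypothesis $\|V_-\|_K<4\pi$ and the Kato bound $\big|\int V_-|u|^2\big|\le\frac{\|V_-\|_K}{4\pi}\|\nabla u\|_{L^2}^2$ give $c_0\|\nabla u\|_{L^2}^2\le B(u)\le C\|\nabla u\|_{L^2}^2$ with $c_0>0$; combined with the sharp free inequality \eqref{3.4} this already shows $C_{GN}^{rad}\le C_{GN}^0c_0^{-(3+b)/2}<\infty$. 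Throughout set $N(u):=\||x|^{-b}|u|^4\|_{L^1}$.

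\smallskip

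\emph{Step 1 (strict improvement over the free constant).} The heart of the argument is that $V_-\ne0$ forces $C_{GN}^{rad}>C_{GN}^0$. Testing $J_V$ on a rescaled free ground state $Q_\lambda(x):=\lambda^{(2-b)/2}Q(\lambda x)$ and using $J_0(Q_\lambda)=C_{GN}^0$ (scale-invariance of $J_0$) gives $J_V(Q_\lambda)=C_{GN}^0\big(\|\nabla Q_\lambda\|_{L^2}^2/B(Q_\lambda)\big)^{(3+b)/2}$, so it suffices to produce some $\lambda>0$ with $\int V|Q_\lambda|^2<0$; since $Q>0$ this is immediate when $V\le0$, and more generally one selects the scale so as to make this quantity negative, which is precisely what $V_-\ne0$ permits. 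A consequence, obtained by comparing $J_V(u_n)=N(u_n)/B(u_n)^{(3+b)/2}$ with $N(u_n)\le C_{GN}^0\|\nabla u_n\|_{L^2}^{3+b}$ (mass-one normalisation and \eqref{3.4}), is that any maximising sequence $\{u_n\}$ with $\|u_n\|_{L^2}=1$ satisfies, for large $n$,
\[
B(u_n)\le\rho_0^{-1}\|\nabla u_n\|_{L^2}^2,\qquad\text{equivalently}\qquad \int V|u_n|^2\le-\eta\,\|\nabla u_n\|_{L^2}^2,
\]
for fixed $\rho_0>1$, $\eta>0$ depending only on $C_{GN}^{rad}/C_{GN}^0$.

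\smallskip

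\emph{Step 2 (compactness and existence of a radial maximiser).} Using the last bound together with a truncation of $V_-$ in $L^{3/2}$ and the Sobolev inequality, one gets, for suitable fixed $R,K$, the inequality $\tfrac{\eta}{2}\|\nabla u_n\|_{L^2}^2\le K\int_{|x|\le R}|u_n|^2\,dx\le K$, so $\{\|\nabla u_n\|_{L^2}\}$ is bounded; and the sequence cannot spread out either, since a spreading (large-scale) sequence has $J_V(u_n)\to C_{GN}^0<C_{GN}^{rad}$ — this is seen by rescaling to unit scale, where the potential term becomes negligible, or cleanly via a (rescaling-only) profile decomposition showing that only a unit-scale bubble feels $V$. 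Hence $\{u_n\}$ is bounded in $H^1_{rad}(\mathbb R^3)$; passing to a subsequence $u_n\rightharpoonup\mathcal Q$ weakly in $H^1$ and a.e., the compact radial embedding yields $u_n\to\mathcal Q$ in $L^4$, hence $N(u_n)\to N(\mathcal Q)$ (split $\int|x|^{-b}|u_n|^4$ at $|x|=1$ and use $|x|^{-b}\in L^q(B_1)$ for $q<3/b$ with $4q'<6$, which holds as $b<1$), and $\int V|u_n|^2\to\int V|\mathcal Q|^2$ (truncate $V$ in $L^{3/2}$, Rellich locally). Since $\lim N(u_n)>0$ we get $\mathcal Q\ne0$; weak lower semicontinuity of $\|\cdot\|_{L^2}$ and $\|\nabla\cdot\|_{L^2}$ then gives $J_V(\mathcal Q)\ge\lim J_V(u_n)=C_{GN}^{rad}$, so $\mathcal Q$ is a radial maximiser (and, replacing it by $|\mathcal Q|$, nonnegative). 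Finally $C_{GN}=C_{GN}^{rad}$: running the same scheme for a possibly non-radial maximising sequence and invoking the rotational symmetry of $V$ and $|x|^{-b}$ (Schwarz symmetrisation controlling the loss in $\|\nabla\cdot\|$ against the gain in $N$) shows the extremiser is radial. The main obstacle is exactly this Step~2 — keeping the maximising sequence from escaping to the free problem — and it is the only place where $V_-\ne0$ (through $C_{GN}^{rad}>C_{GN}^0$) is indispensable.

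\smallskip

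\emph{Step 3 (Euler--Lagrange equation and identities).} As $\mathcal Q$ maximises $J_V$, it is a critical point; the estimates of Lemma~\ref{lem2.3} make $J_V$ differentiable in the relevant directions, and computing $\frac{d}{d\varepsilon}J_V(\mathcal Q+\varepsilon\varphi)\big|_{\varepsilon=0}=0$ gives $-\Delta\mathcal Q+V\mathcal Q=\alpha\mathcal Q+\beta|x|^{-b}|\mathcal Q|^2\mathcal Q$ with $\alpha=-\frac{1-b}{3+b}\,\frac{B(\mathcal Q)}{\|\mathcal Q\|_{L^2}^2}$ and $\beta=\frac{4}{3+b}\,\frac{B(\mathcal Q)}{N(\mathcal Q)}$. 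Under $\mathcal Q\mapsto c\mathcal Q$ one has $\beta\mapsto c^{-2}\beta$ while $\alpha$ is unchanged, so normalising $\beta=1$ turns this into
\[
(-\Delta+V)\mathcal Q+w_{\mathcal Q}^2\mathcal Q-|x|^{-b}|\mathcal Q|^2\mathcal Q=0,\qquad w_{\mathcal Q}^2=-\alpha=\tfrac{1-b}{3+b}\,\tfrac{\|H^{1/2}\mathcal Q\|_{L^2}^2}{\|\mathcal Q\|_{L^2}^2}>0,
\]
the positivity using $b<1$ and $B(\mathcal Q)=\|H^{1/2}\mathcal Q\|_{L^2}^2>0$ (Kato); this is \eqref{3.33}. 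The first identity in \eqref{3.34} is just this definition of $w_{\mathcal Q}$ rewritten, and pairing the equation with $\mathcal Q$ gives $\|H^{1/2}\mathcal Q\|_{L^2}^2+w_{\mathcal Q}^2\|\mathcal Q\|_{L^2}^2=N(\mathcal Q)$, whence the second identity $N(\mathcal Q)=\frac{4}{1-b}w_{\mathcal Q}^2\|\mathcal Q\|_{L^2}^2$ follows by substitution. (Positivity and smoothness of $\mathcal Q$ away from the origin follow from the equation by the maximum principle and elliptic regularity, consistent with the choice $\mathcal Q\ge0$.)
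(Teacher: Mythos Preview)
Your overall strategy matches the paper's: take a maximizing sequence, prevent escape to the potential-free problem using $V_-\ne0$, and extract a limit via radial compactness. The genuine gap is in Step~1. You assert that $V_-\ne0$ allows one to choose a scale $\lambda$ with $\int V|Q_\lambda|^2<0$, but this can fail for sign-changing $V$: take $V$ large and positive on a ball about the origin and only mildly negative on a distant annulus. Since every $Q_\lambda$ is centered at the origin and radially decreasing, the positive contribution dominates and $\int V|Q_\lambda|^2>0$ for \emph{every} $\lambda>0$. Without the strict inequality $C_{GN}^{rad}>C_{GN}^0$, your bootstrap in Step~2 --- the bound $\int V|u_n|^2\le -\eta\|\nabla u_n\|_{L^2}^2$ and the dichotomy ruling out spreading --- never starts. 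The paper handles this step differently: it normalizes via a two-parameter rescaling so that both $\|\tilde u_n\|_{L^2}=1$ and $\|H_{r_n}^{1/2}\tilde u_n\|_{L^2}=1$, reducing compactness to showing the scale $r_n$ stays in a compact subset of $(0,\infty)$; to exclude $r_n\to0,\infty$ it tests $J_V$ on a \emph{translated} bump $Q((x-x^*)/\epsilon)$ with $x^*$ chosen in the support of $V_-$, exploiting that $C_{GN}$ is taken over all of $H^1$ (radiality of the maximizer is obtained at the outset via Schwarz symmetrization, which you also invoke at the end of Step~2).

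Your Step~3, by contrast, is more economical than the paper's. You correctly note that the first relation in \eqref{3.34} is literally the definition of $w_{\mathcal Q}$ rewritten, and the second follows at once by pairing \eqref{3.33} with $\mathcal Q$ and substituting. The paper instead multiplies \eqref{3.33} by both $\mathcal Q$ and $x\cdot\nabla\mathcal Q$, obtains two relations carrying the extra term $\int(2V+x\cdot\nabla V)|\mathcal Q|^2\,dx$, and only after substituting the definition of $w_{\mathcal Q}$ back in deduces that this extra term vanishes --- a longer route to the same identities.
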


 \begin{proof}
 Set $$I(u)=\int_{{\bf R}^{3}}|x|^{-b}|u(x)|^4dx.$$
Let $\{u_{n}\}_{n=1}^{\infty}\subset H^1({\bf R}^3)$ be a maximizing sequence associated to $J_{V}(u)$.
By Schwarz symmetrization, we can assume that $\{u_{n}\}_{n=1}^{\infty}$ is radial and radially non-increasing for all $n$.
For each $n$, we choose $\alpha_{n}$, $r_{n}>0$ such that
\begin{align*}
\|\alpha_{n}u(\frac{\cdot}{r_{n}})\|_{L^{2}}^{2}=\alpha_{n}^{2}r_{n}^{3}\|u_{n}\|_{L^{2}}^{2}=1
\end{align*}
and
\begin{align*}
\|H_{r_{n}}^{\frac{1}{2}}\alpha_{n}u(\frac{\cdot}{r_{n}})\|_{L^{2}}^{2}=\alpha_{n}^{2}r_{n}\|H^{\frac{1}{2}}u_{n}\|_{L^{2}}^{2}=1
\end{align*}
where $H_{r}=-\Delta+\frac{1}{r^{2}}V(\frac{\cdot}{r})$. Since $J_{V}(\alpha u)=J_{V}(u)$, replacing $\{u_{n}\}_{n=1}^{\infty}$
by $\{\alpha_{n}u_{n}\}_{n=1}^{\infty}$, we may assume that $\|u(\frac{\cdot}{r_{n}})\|_{L^{2}}^{2}=1$ and
$\|H_{r_{n}}^{\frac{1}{2}}u(\frac{\cdot}{r_{n}})\|_{L^{2}}^{2}=1$. Set $\tilde{u}_{n}=u_{n}(\frac{\cdot}{r_{n}})$. Then
$\{\tilde{u}_{n}\}_{n=1}^{\infty}$ is a bounded sequence in $H^{1}$, since by the norm equivalence,
\begin{align*}
\|\tilde{u}_{n}\|_{L^{2}}^{2}=1,\; \|\nabla \tilde{u}_{n}\|_{L^{2}}^{2}\sim \|H^{\frac{1}{2}}_{r_{n}}\tilde{u}_{n}\|_{L^{2}}^{2}=1.
\end{align*}
Therefore, there exists some $\tilde u\in H^1({\bf R}^3)$ such that, up to a subsequence, $\tilde u_n\rightharpoonup\tilde u$ weakly in
$H^1({\bf R}^3)$.
Furthermore, $\tilde u$ is nonnegative, spherically symmetric, radially non-increasing, and with some $r_0\in(0,+\infty)$:
$$\|\tilde u\|_{L^2}\leq1,\ \ \|H^{\frac12}_{r_0}\tilde u\|_{L^2}=\left(\int_{{\bf R}^3}|\nabla\tilde u|^2+\frac1{r_0^2}V(\frac{x}{r_0})|\tilde u|^2dx\right)^{\frac12}\leq1.$$
Indeed, if we suppose that $r_n\rightarrow0$ or $r_n\rightarrow+\infty$, then by the "free" Gagliardo-Nirenberg inequality and the assumption,
\begin{align}\label{re1}
\frac{\big\||x|^{-b}|Q|^{4}\big\|_{L^{1}}}
{\|Q\|_{L^{2}}^{1-b}\|\nabla Q\|_{L^{2}}^{3+b}}\geq
\lim_{n\rightarrow\infty}\frac{\big\||x|^{-b}|\tilde{u}_{n}|^{4}\big\|_{L^{1}}}
{\|\tilde{u}_{n}\|_{L^{2}}^{1-b}\|\nabla\tilde{u}_{n}\|_{L^{2}}^{3+b}}
=\lim_{n\rightarrow\infty}\frac{\big\||x|^{-b}|\tilde{u}_{n}|^{4}\big\|_{L^{1}}}
{\|\tilde{u}_{n}\|_{L^{2}}^{1-b}\|H_{r_{n}}^{\frac{1}{2}}\tilde{u}_{n}\|_{L^{2}}^{3+b}}= C_{GN}.
\end{align}
with $Q$ is the ground state of the free equation.
On the other hand, since $V_-\neq0$, then there exists some $x^*\in\mathbb R^3$ and a small $\epsilon>0$ such that
$$\int_{{\bf R}^3}V(x)Q^2\left(\frac{x-x^*}\epsilon\right)dx<0.$$ 
Hence,
$$C_{GN}\geq J_V\left(Q\left(\frac{x-x^*}\epsilon\right)\right)>\frac{\big\||x|^{-b}|Q(\frac{x-x^*}\epsilon)|^{4}\big\|_{L^{1}}}
{\|Q(\frac{x-x^*}\epsilon)\|_{L^{2}}^{1-b}\|\nabla Q(\frac{x-x^*}\epsilon)\|_{L^{2}}^{3+b}}=\frac{\big\||x|^{-b}|Q|^{4}\big\|_{L^{1}}}
{\|Q\|_{L^{2}}^{1-b}\|\nabla Q\|_{L^{2}}^{3+b}},$$
contradicting \eqref{re1}.

In this stage, we set $\psi(x)=\tilde u(r_0x)$ and obtain that
\begin{align*}
C_{GN}&=\lim_{n\rightarrow\infty}J_{V}(u_{n})
=\lim_{n\rightarrow\infty}\frac{\big\||x|^{-b}|u_{n}|^{4}\big\|_{L^{1}}}
{\|u_{n}\|_{L^{2}}^{1-b}\|H^{\frac{1}{2}}u_{n}\|_{L^{2}}^{3+b}}
=\lim_{n\rightarrow\infty}\frac{\big\||x|^{-b}|\tilde{u}_{n}|^{4}\big\|_{L^{1}}}
{\|\tilde{u}_{n}\|_{L^{2}}^{1-b}\|H_{r_{n}}^{\frac{1}{2}}\tilde{u}_{n}\|_{L^{2}}^{3+b}}\nonumber\\
&\leq\frac{\big\||x|^{-b}|\tilde{u}|^{4}\big\|_{L^{1}}}
{\|\tilde{u}\|_{L^{2}}^{1-b}\|H_{r_{0}}^{\frac{1}{2}}\tilde{u}\|_{L^{2}}^{3+b}}
=\frac{\big\||x|^{-b}|\psi|^{4}\big\|_{L^{1}}}
{\|\psi\|_{L^{2}}^{1-b}\|H
^{\frac{1}{2}}\psi\|_{L^{2}}^{3+b}}\leq C_{GN}.
\end{align*}
Therefore, we actually obtain that $\tilde u_n\rightarrow\tilde u$ and $r_n\rightarrow r_0$ which give then $u_n\rightarrow\psi$
in $H^1$, attaining $C_{GN}$.

Now that $\psi$ is a maximizer of $J_{V}(u)$. Then, it solves the
Euler-Lagrangle equation equivalently,
\begin{align*}
\langle H\psi+\frac{1-b}{3+b}\frac{\|H^{\frac{1}{2}}\psi\|_{L^{2}}^{2}}{\|\psi\|_{L^{2}}^{2}}\psi
-\frac{4}{3+b}\frac{\|H^{\frac{1}{2}}\psi\|_{L^{2}}^{2}}{\||x|^{-b}|\psi|^{4}\|_{L^{1}}}|x|^{-b}|\psi|^{2}\psi,
V \rangle=0
\end{align*}
for all $v\in H^{1}$. We set
\begin{align*}
\mathcal{Q}\doteq \frac{2}{\sqrt{3+b}}\frac{\|H^{\frac{1}{2}}\psi\|_{L^{2}}}{\||x|^{-b}|\psi|^{4}\|_{L^{1}}^{\frac{1}{2}}}\psi.
\end{align*}
Then $\mathcal{Q}$ is a weak solution to the ground state equation \eqref{3.33}.

Let's turn to the proof of \eqref{3.34}. Formally, multiplying \eqref{3.33} by $\mathcal{Q}$ and $x\cdot\nabla\mathcal{Q}$, separately,  integrating in $x$
and applying integration by parts, we get
\begin{align}\label{3.46}
\|H^{\frac{1}{2}}\mathcal{Q}\|_{L^{2}}^{2}+w_{\mathcal{Q}}^{2}\|\mathcal{Q}\|_{L^{2}}^{2}-\big\||x|^{-b}|\mathcal{Q}|^{4}\big\|_{L^{1}}=0
\end{align}
and
\begin{align}\label{3.47}
\|H^{\frac{1}{2}}\mathcal{Q}\|_{L^{2}}^{2}+3w_{\mathcal{Q}}^{2}\|\mathcal{Q}\|_{L^{2}}^{2}-\frac{3-b}{2}\big\||x|^{-b}|\mathcal{Q}|^{4}\big\|_{L^{1}}
+\displaystyle\int_{{\bf R}^{3}}(2V+x\cdot\nabla V)|\mathcal{Q}|^{2}dx=0
\end{align}
The rigorous proof relies on the standard approximating method.
Solving the simultaneous equations \eqref{3.46} and \eqref{3.47} in $\|H^{\frac{1}{2}}\mathcal{Q}\|_{L^{2}}^{2}$ and
$\big\||x|^{-b}|\mathcal{Q}|^{4}\big\|_{L^{1}}$ gives
\begin{align}\label{3.48}
\|H^{\frac{1}{2}}\mathcal{Q}\|_{L^{2}}^{2}=\frac{3+b}{1-b}w_{\mathcal{Q}}^{2}\|\mathcal{Q}\|_{L^{2}}^{2}
+\frac{2}{1-b}\displaystyle\int_{{\bf R}^{3}}(2V+x\cdot\nabla V)|\mathcal{Q}|^{2}dx,
\end{align}
and
\begin{align}\label{3.48}
\||x|^{-b}|\mathcal{Q}|^{4}\|_{L^{1}}=\frac{4}{1-b}w_{\mathcal{Q}}^{2}\|\mathcal{Q}\|_{L^{2}}^{2}
+\frac{2}{1-b}\displaystyle\int_{{\bf R}^{3}}(2V+x\cdot\nabla V)|\mathcal{Q}|^{2}dx.
\end{align}
Substituting $$w_{\mathcal{Q}}=\frac{\sqrt{1-b}}{\sqrt{3+b}}\frac{\|H^{\frac{1}{2}}\mathcal{Q}\|_{L^{2}}}{\|\mathcal{Q}\|_{L^{2}}}$$
 in \eqref{3.46} and \eqref{3.47}
yields that
\begin{align}\label{3.49}
\frac{2}{1-b}\displaystyle\int_{{\bf R}^{3}}(2V+x\cdot\nabla V)|\mathcal{Q}|^{2}dx=0.
\end{align}
Then \eqref{3.48} and \eqref{3.49} imply that \eqref{3.34} is true.
\end{proof}

\section{Criteria for global well-posedness}
\setcounter{equation}{0}
In this section we first apply the results in the previous section to give the proof of Theorem \ref{th1.1}, and then establish
some estimates required in the proof of Theorem \ref{th1.2}.
For one's convenience, we restate Theorem \ref{th1.1} as follows.

\begin{theorem}\label{th4.1}
Suppose that $V$ is radially symmetric and satisfies \eqref{1.2} and \eqref{1.3}, and $0<b<1$. We assume that
\begin{align}\label{4.1}
M(u_{0})^{1-s_{c}}E(u_{0})^{s_{c}}<\mathcal{E}.
\end{align}
Let $u(t)$ be the solution to \eqref{1.1} with initial data $u_{0}\in H^{1}({\bf R}^{3})$.

(i) If
\begin{align}\label{4.2}
\|u_{0}\|_{L^{2}}^{2(1-s_{c})}\|H^{\frac{1}{2}}u_{0}\|_{L^{2}}^{2s_{c}}<\mathcal{K},
\end{align}
then $u(t)$ exists globally in time, and
\begin{align}\label{4.3}
\|u_{0}\|_{L^{2}}^{2(1-s_{c})}\|H^{\frac{1}{2}}u(t)\|_{L^{2}}^{2s_{c}}<\mathcal{K}, \forall t\in{\bf R}.
\end{align}

(ii) If
\begin{align}\label{4.4}
\|u_{0}\|_{L^{2}}^{2(1-s_{c})}\|H^{\frac{1}{2}}u_{0}\|_{L^{2}}^{2s_{c}}>\mathcal{K},
\end{align}
then
\begin{align}\label{4.5}
\|u_{0}\|_{L^{2}}^{2(1-s_{c})}\|H^{\frac{1}{2}}u(t)\|_{L^{2}}^{2s_{c}}>\mathcal{K}
\end{align}
during the maximal existence time.
\end{theorem}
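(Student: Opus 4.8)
The plan is to reduce both parts to a single scalar inequality for the conserved‑quantity combination
\[
X(t):=\|u_{0}\|_{L^{2}}^{2(1-s_{c})}\,\|H^{\frac12}u(t)\|_{L^{2}}^{2s_{c}},
\]
defined on the maximal interval $I$ furnished by the local theory (Lemma \ref{lem2.2}), and then to run a connectedness argument on $I$. Throughout one uses that here $s_{c}=\frac{1+b}{2}$, so $2s_{c}=1+b$, $2(1-s_{c})=1-b$ and $\frac{3+b}{2}=s_{c}+1$; that $\|u(t)\|_{L^{2}}=\|u_{0}\|_{L^{2}}$ and $E(u(t))$ are conserved; and that $E(u)=\frac12\|H^{\frac12}u\|_{L^{2}}^{2}-\frac14\||x|^{-b}|u|^{4}\|_{L^{1}}$, since $\|H^{\frac12}u\|_{L^{2}}^{2}=\int|\nabla u|^{2}+\int V|u|^{2}$. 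In case (i), once $X(t)<\mathcal K$ is known on $I$, the norm equivalence \eqref{2.2} and mass conservation bound $\|u(t)\|_{H^{1}}$ uniformly on $I$, so the blow‑up alternative in Lemma \ref{lem2.2} forces $I=\R$; in case (ii) no global statement is made, consistently with Theorem \ref{the1}.

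The first step is the one‑variable inequality. Inserting the sharp Gagliardo--Nirenberg inequality with potential \eqref{3.11}, valid for every $H^{1}(\R^{3})$ function with best constant $C_{GN}$ (Remark \ref{rem3.2}, Lemma \ref{lem3.5}), into the conserved energy gives
\[
E(u(t))\ \ge\ \tfrac12\|H^{\frac12}u(t)\|_{L^{2}}^{2}-\tfrac{C_{GN}}{4}\,\|u_{0}\|_{L^{2}}^{2(1-s_{c})}\,\|H^{\frac12}u(t)\|_{L^{2}}^{3+b}.
\]
Writing $\|H^{\frac12}u(t)\|_{L^{2}}^{2}=\big(X(t)\,\|u_{0}\|_{L^{2}}^{-2(1-s_{c})}\big)^{1/s_{c}}$, using $\frac{3+b}{2}=s_{c}+1$, and multiplying by $M(u_{0})^{(1-s_{c})/s_{c}}$, the powers of $\|u_{0}\|_{L^{2}}$ cancel, so that
\[
M(u_{0})^{\frac{1-s_{c}}{s_{c}}}E(u(t))\ \ge\ p\big(X(t)\big),\qquad p(X):=\tfrac12X^{1/s_{c}}-\tfrac{C_{GN}}{4}X^{(s_{c}+1)/s_{c}} .
\]
Since $p(X)=X^{1/s_{c}}\big(\tfrac12-\tfrac{C_{GN}}{4}X\big)$, one has $p>0$ on $[0,\tfrac{2}{C_{GN}})$, $p$ increasing on $[0,X_{*}]$ and decreasing on $[X_{*},\infty)$ with $X_{*}=\tfrac{2}{C_{GN}(s_{c}+1)}$, and on $\{p\ge 0\}$ one may raise the last display to the power $s_{c}$ to get $M(u_{0})^{1-s_{c}}E(u(t))^{s_{c}}\ge p(X(t))^{s_{c}}$.

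The next step is the bookkeeping that ties $p$ to $\mathcal E,\mathcal K$: using $s_{c}+1=\frac{3+b}{2}$ together with the identities $\mathcal K=\frac{4}{(3+b)C_{GN}}$ and $\mathcal E=\big(\frac{s_{c}}{3+b}\big)^{s_{c}}\mathcal K$ recorded after Lemma \ref{lem3.5}, one checks that $X_{*}=\mathcal K$, that $\mathcal K<\tfrac{2}{C_{GN}}$ (equivalently $b>-1$, hence $p(\mathcal K)>0$), and that $p(\mathcal K)^{s_{c}}=\mathcal E$. The conclusion then follows by connectedness: $t\mapsto X(t)$ is continuous on $I$ since $u\in C(I,H^{1})$. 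In case (i), $X(0)<\mathcal K$ and, $p$ being positive on $[0,\mathcal K]$, the inequality above already yields $E(u_{0})>0$, so $E(u_{0})^{s_{c}}$ is legitimate; if $X$ were to equal $\mathcal K$ at some $t_{0}\in I$, conservation would give $M(u_{0})^{1-s_{c}}E(u_{0})^{s_{c}}=M(u_{0})^{1-s_{c}}E(u(t_{0}))^{s_{c}}\ge p(\mathcal K)^{s_{c}}=\mathcal E$, contradicting \eqref{4.1}; hence $\{t\in I:X(t)<\mathcal K\}$ is open, closed and nonempty in $I$, so $X(t)<\mathcal K$ on $I$, which is \eqref{4.3}, and global existence follows as above. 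Case (ii) is identical, run from $X(0)>\mathcal K$: reaching $\mathcal K$ again forces $M(u_{0})^{1-s_{c}}E(u_{0})^{s_{c}}\ge\mathcal E$, so $X(t)>\mathcal K$ throughout $I$, i.e.\ \eqref{4.5}.

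I expect the only genuinely delicate points to be small ones: (a) the algebraic identification $X_{*}=\mathcal K$ and $p(\mathcal K)^{s_{c}}=\mathcal E$, which hinges on the two constants from Section \ref{sec-5} being exactly as stated; and (b) the observation that hypothesis (i) self‑improves to $E(u_{0})>0$, which is what makes the fractional power $E^{s_{c}}$ and the whole computation meaningful (the degenerate subcase $E(u_{0})\le0$ of part (ii) is trivial, forcing $X(t)\ge \tfrac{2}{C_{GN}}>\mathcal K$ directly). All the analytic content — the sharp Gagliardo--Nirenberg inequality with potential, with the free best constant, for arbitrary (not only radial) $H^{1}$ data — has already been supplied in Section \ref{sec-5} and is merely invoked here.
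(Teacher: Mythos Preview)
Your proof is correct and follows essentially the same route as the paper: both apply the sharp Gagliardo--Nirenberg inequality with potential to the conserved energy, reduce to a one-variable function of the mass--kinetic quantity whose unique maximum occurs at $\mathcal K$ with maximal value $\mathcal E^{1/s_c}$, and conclude by continuity that this quantity cannot cross $\mathcal K$. Your parametrization by $X(t)=g(t)^{2s_c}$ and function $p(X)$ are just a change of variable from the paper's $g(t)$ and $f(x)$ (one checks $p(X(t))=f(g(t))$ exactly, using $C_{GN}/4=1/((3+b)\mathcal K)$), and you are somewhat more explicit than the paper about the positivity of $E(u_0)$ in case (i), the degenerate subcase $E(u_0)\le 0$ of (ii), and the passage from the uniform $H^1$ bound to global existence via the blow-up alternative.
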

\vskip 0.5cm

\begin{proof}
If $V\geq 0$, it follows from \eqref{3.3} and \eqref{3.5} that
\begin{align}\label{4.6}
\mathcal{E}=M(Q)^{1-s_{c}}E_{0}(Q)^{s_{c}}
=\Big(\frac{s_{c}}{3+b}\Big)^{s_{c}}\|Q\|_{L^{2}}^{2(1-s_{c})}\|\nabla Q\|_{L^{2}}^{2s_{c}}
=\Big(\frac{s_{c}}{3+b}\Big)^{s_{c}}\mathcal{K}
\end{align}
and
\begin{align}\label{4.7}
 C_{GN}=\frac4{(3+b)\|Q\|_{L^2}^{2(1-s_{c})}\|\nabla
Q\|_{L^2}^{2s_{c}} }=\frac4{(3+b)\mathcal{K}}.
\end{align}
If $V\leq 0$, using Pohozhaev identities \eqref{3.34}, we have
\begin{align}\label{4.8}
E(\mathcal{Q})=\frac{1}{2}\|H^{\frac{1}{2}}\mathcal{Q}\|_{L^{2}}^{2}-\frac{1}{4}\big\||x|^{-b}|\mathcal{Q}|^{4}\big\|_{L^{1}}
=\frac{s_{c}}{3+b}\|H^{\frac{1}{2}}\mathcal{Q}\|_{L^{2}}^{2},
\end{align}
which implies that
\begin{align}\label{4.9}
\mathcal{E}=M(\mathcal{Q})^{1-s_{c}}E(\mathcal{Q})^{s_{c}}
=\Big(\frac{s_{c}}{3+b}\Big)^{s_{c}}\|\mathcal{Q}\|_{L^{2}}^{2(1-s_{c})}\|H^{\frac{1}{2}}\mathcal{Q}\|_{L^{2}}^{2s_{c}}
=\Big(\frac{s_{c}}{3+b}\Big)^{s_{c}}\mathcal{K}.
\end{align}
Using Pohozhaev identities \eqref{3.34} again, we have
\begin{align}\label{4.10}
 C_{GN}=\frac{\big\||x|^{-b} |\mathcal{Q}|^{4}\big\|_{L^{1}}}{\|\mathcal{Q}\|_{L^2}^{1-b}\|H^{\frac{1}{2}} \mathcal{Q}\|_{L^2}^{3+b}}
 =\frac4{(3+b)\|\mathcal{Q}\|_{L^2}^{2(1-s_{c})}\|H^{\frac{1}{2}} \mathcal{Q}\|_{L^2}^{2s_{c}} }=\frac4{(3+b)\mathcal{K}}.
\end{align}
Hence, using the Gagliardo-Nirenberg inequality( see Remark \ref{rem3.2} and Lemma \ref{lem3.5}) and the energy conservation law \eqref{1.9} yields that
\begin{align}\label{4.11}
\begin{split}
\mathcal{E}&>M(u_{0})^{1-s_{c}}E(u_{0})^{s_{c}}=M(u_{0})^{1-s_{c}}E(u(t))^{s_{c}}\\
&=\|u_{0}\|_{L^{2}}^{2(1-s_{c})}\Big(\frac{1}{2}\|H^{\frac{1}{2}}u(t)\|_{L^{2}}^{2}-\frac{1}{4}\big\||x|^{-b}|u(t)|^{4}\big\|_{L^{1}}\Big)^{s_{c}}\\
&\geq \|u_{0}\|_{L^{2}}^{2(1-s_{c})}\Big(\frac{1}{2}\|H^{\frac{1}{2}}u(t)\|_{L^{2}}^{2}
-\frac{C_{GN}}{4}\|u_{0}\|_{L^{2}}^{1-b}\|H^{\frac{1}{2}}u(t)\|_{L^{2}}^{3+b}\Big)^{s_{c}}\\
&= \Big(\frac{1}{2}\|u_{0}\|_{L^{2}}^{\frac{2(1-s_{c})}{s_{c}}}\|H^{\frac{1}{2}}u(t)\|_{L^{2}}^{2}
-\frac{1}{(3+b)\mathcal{K}}\|u_{0}\|_{L^{2}}^{\frac{2(1-s_{c})(1+s_{c})}{s_{c}}}\|H^{\frac{1}{2}}u(t)\|_{L^{2}}^{2(1+s_{c})}\Big)^{s_{c}}.
\end{split}
\end{align}
Let $f(x)=\frac{1}{2}x^{2}-\frac{1}{(3+b)\mathcal{K}}x^{2(1+s_{c})}$, then $f'(x)=x(1-\frac{1}{\mathcal{K}}x^{2s_{c}})$. Thus, $f'(x)=0$ when
$x_{0}=0$ and $x_{1}=\mathcal{K}^{\frac{1}{2s_{c}}}$. The graph of $f$ is concave for $x\geq 0$ and it has a local minimum at $x_{0}$ and
a local maximum at $x_{1}$. Let $h(t)=\|u_{0}\|_{L^{2}}^{\frac{1-s_{c}}{s_{c}}}\|H^{\frac{1}{2}}u(t)\|_{L^{2}}$. The condition \eqref{4.1}, \eqref{4.6}
and \eqref{4.9}
imply that $M(u_{0})^{1-s_{c}}E(u_{0})^{s_{c}}<(f(\mathcal{K}^{\frac{1}{2s_{c}}}))^{s_{c}}=(\frac{s_{c}}{3+b})^{s_{c}}\mathcal{K}=\mathcal{E}$.
This combined with \eqref{4.11} gives that
$$
f(h(t))<f(x_{1}).
$$
If initially $h(0)<x_{1}$, then by the continuity of $h(t)$ in $t$, we have \eqref{4.3} for any time $t$ belonging to the maximal time interval of existence.
In particular, the $H^{1}-$norm of the solution $u$ is bounded, which proves the global existence in this case. Similarly, if $h(0)>x_{1}$, we have
\eqref{4.5} for any time $t$ belonging to the maximal time interval of existence.
\end{proof}

The next two lemmas provide some additional properties for the solution $u$  under the
hypotheses \eqref{4.1} and \eqref{4.2} of Theorem \ref{th4.1}.
These lemmas
 will be needed in the proof of Theorem \ref{th1.2} through a virial-type estimate, which will be established  in the last
two sections.

\begin{lemma}\label{lem4.2}
In the situation $(i)$ of Theorem \ref{th4.1},
if $u$ is a solution of the problem \eqref{1.1} with radial initial data $u_0$,
then the following statements hold

(i)
\begin{align}\label{4.12}
2E(u_{0})\leq \|H^{\frac{1}{2}}u(t)\|_{L^{2}}^{2}<\frac{3+b}{s_{c}}E(u_{0}), \; \forall t\in{\bf R}.
\end{align}

(ii)
\begin{align}\label{4.13}
\|u_{0}\|_{L^{2}}^{2(1-s_{c})}\|H^{\frac{1}{2}}u(t)\|_{L^{2}}^{2s_{c}}<\omega\mathcal{K},\; \forall t\in{\bf R},
\end{align}
where $\omega=\frac{M(u_{0})^{1-s_{c}}E(u_{0})^{s_{c}}}{\mathcal{E}}$.

(iii)
\begin{align}\label{4.14}
8\|\nabla u(t)\|_{L^{2}}^2-2(3+b)\big\||x|^{-b}|u(t)|^{4}\big\|_{L^{1}}>
8(1-\omega)\|\nabla u\|_{L^2}^2\sim E(u_{0}), \; \forall t\in{\bf R}.
\end{align}

\end{lemma}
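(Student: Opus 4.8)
The plan is to treat part (i) as the core statement and then deduce (ii) and (iii) from it, using the sharp Gagliardo--Nirenberg inequality \eqref{3.11} (and its ``free'' counterpart \eqref{3.4}--\eqref{3.5}) together with the identities $\mathcal{E}=(\tfrac{s_{c}}{3+b})^{s_{c}}\mathcal K$ and $C_{GN}\mathcal K=\tfrac4{3+b}$ already recorded in the proof of Theorem~\ref{th4.1} (see \eqref{4.6}--\eqref{4.10}). For the left inequality in \eqref{4.12} I would only use the energy law $E(u(t))=E(u_{0})$, the nonnegativity of $\||x|^{-b}|u|^{4}\|_{L^{1}}$, and the identity $\|H^{\frac12}u\|_{L^{2}}^{2}=2E(u)+\tfrac12\||x|^{-b}|u|^{4}\|_{L^{1}}$ (from \eqref{1.9} and $\|H^{\frac12}u\|_{L^{2}}^{2}=\|\nabla u\|_{L^{2}}^{2}+\int_{{\bf R}^{3}}V|u|^{2}dx$), so that $\|H^{\frac12}u(t)\|_{L^{2}}^{2}\ge 2E(u_{0})$ directly. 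For the right inequality I would insert \eqref{3.11} into the energy,
\begin{align*}
E(u_{0})=E(u(t))\;\ge\;\tfrac12\|H^{\frac12}u(t)\|_{L^{2}}^{2}-\tfrac{C_{GN}}4\,\|u_{0}\|_{L^{2}}^{1-b}\,\|H^{\frac12}u(t)\|_{L^{2}}^{3+b},
\end{align*}
and then feed in the a priori subcriticality bound $\|u_{0}\|_{L^{2}}^{2(1-s_{c})}\|H^{\frac12}u(t)\|_{L^{2}}^{2s_{c}}<\mathcal K$ supplied by Theorem~\ref{th4.1}(i) (i.e.\ \eqref{4.3}) through the factorisation $\|u_{0}\|_{L^{2}}^{1-b}\|H^{\frac12}u(t)\|_{L^{2}}^{3+b}=\big(\|u_{0}\|_{L^{2}}^{2(1-s_{c})}\|H^{\frac12}u(t)\|_{L^{2}}^{2s_{c}}\big)\|H^{\frac12}u(t)\|_{L^{2}}^{2}$ (using $1-b=2(1-s_{c})$, $1+b=2s_{c}$). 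This yields
\begin{align*}
E(u_{0})\;>\;\Big(\tfrac12-\tfrac{C_{GN}\mathcal K}{4}\Big)\|H^{\frac12}u(t)\|_{L^{2}}^{2}=\Big(\tfrac12-\tfrac1{3+b}\Big)\|H^{\frac12}u(t)\|_{L^{2}}^{2}=\tfrac{s_{c}}{3+b}\|H^{\frac12}u(t)\|_{L^{2}}^{2},
\end{align*}
which rearranges to the right inequality of \eqref{4.12}; the strictness is inherited from the strict inequality in \eqref{4.3} (for $u_{0}\not\equiv0$, so $\|H^{\frac12}u(t)\|_{L^{2}}>0$), and it shows in particular that $E(u_{0})>0$.

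Part (ii) then follows by raising \eqref{4.12} to the power $s_{c}$ and multiplying by $\|u_{0}\|_{L^{2}}^{2(1-s_{c})}=M(u_{0})^{1-s_{c}}$:
\begin{align*}
\|u_{0}\|_{L^{2}}^{2(1-s_{c})}\|H^{\frac12}u(t)\|_{L^{2}}^{2s_{c}}<\Big(\tfrac{3+b}{s_{c}}\Big)^{s_{c}}M(u_{0})^{1-s_{c}}E(u_{0})^{s_{c}}=\Big(\tfrac{3+b}{s_{c}}\Big)^{s_{c}}\omega\,\mathcal E=\omega\,\mathcal K,
\end{align*}
using $\mathcal E=(\tfrac{s_{c}}{3+b})^{s_{c}}\mathcal K$ and the definition of $\omega$. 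For part (iii), the comparability $8(1-\omega)\|\nabla u(t)\|_{L^{2}}^{2}\sim E(u_{0})$ holds uniformly in $t$ because $\omega<1$ by \eqref{4.1}, because $\|\nabla u(t)\|_{L^{2}}^{2}\sim\|H^{\frac12}u(t)\|_{L^{2}}^{2}$ by the norm equivalence \eqref{2.2}, and because $\|H^{\frac12}u(t)\|_{L^{2}}^{2}\sim E(u_{0})$ by \eqref{4.12}. The strict inequality on the left of \eqref{4.14} is, by elementary algebra, equivalent to
\begin{align*}
(3+b)\,\||x|^{-b}|u(t)|^{4}\|_{L^{1}}<4\omega\,\|\nabla u(t)\|_{L^{2}}^{2},
\end{align*}
and the ``free'' sharp Gagliardo--Nirenberg inequality \eqref{3.4}--\eqref{3.5} (recalling $C^{0}_{GN}=\tfrac4{(3+b)\mathcal K}$, consistent with \eqref{4.7}, \eqref{4.10}) reduces this to $\|u_{0}\|_{L^{2}}^{2(1-s_{c})}\|\nabla u(t)\|_{L^{2}}^{2s_{c}}<\omega\mathcal K$, which follows from part (ii) after bounding $\|\nabla u(t)\|_{L^{2}}\le\|H^{\frac12}u(t)\|_{L^{2}}$ when $V\ge0$ (and, for general $V$ satisfying \eqref{1.2}, by absorbing the negative part of $V$ through \eqref{2.2}).

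I do not expect a substantive obstacle: given Theorem~\ref{th4.1}, the lemma is a matter of careful bookkeeping with conservation laws and the sharp Gagliardo--Nirenberg constant. The one delicate point is in part (i), where the subcriticality bound \eqref{4.3} must be threaded through the Gagliardo--Nirenberg estimate in exactly the way that produces the sharp coefficient $\tfrac{s_{c}}{3+b}=\tfrac12-\tfrac1{3+b}$; a coarser use of \eqref{3.11} would lose the precise constant needed for (ii) and, ultimately, for the virial estimate in the later sections. A secondary nuisance is reconciling the $\|\nabla u\|_{L^{2}}$ appearing in \eqref{4.14} with the natural energy-space quantity $\|H^{\frac12}u\|_{L^{2}}$ when $V_{-}\neq0$, which is harmless by \eqref{2.2} but should be stated explicitly.
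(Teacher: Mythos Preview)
Your arguments for parts (i) and (ii) are essentially identical to the paper's, and are correct. The gap is in your treatment of part (iii) when $V\le 0$.

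You reduce \eqref{4.14} to $\|u_0\|_{L^2}^{2(1-s_c)}\|\nabla u(t)\|_{L^2}^{2s_c}<\omega\mathcal K$ via the \emph{free} Gagliardo--Nirenberg inequality \eqref{3.4}, and then propose to deduce this from (ii) by bounding $\|\nabla u(t)\|_{L^2}\le\|H^{1/2}u(t)\|_{L^2}$. That inequality is only available when $V\ge0$; for $V\le0$ it points the wrong way, and invoking the norm equivalence \eqref{2.2} in its place introduces an implicit constant $C>1$ that destroys the sharp factor $\omega$ you need. So your remark that this is ``harmless by \eqref{2.2}'' is not correct: \eqref{2.2} would only yield $8\|\nabla u\|_{L^2}^2-2(3+b)\||x|^{-b}|u|^4\|_{L^1}>8(1-C\omega)\|\nabla u\|_{L^2}^2$, which can be negative.

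The paper fixes this by splitting into the two sign cases and, when $V\le0$, using the Gagliardo--Nirenberg inequality \emph{with potential} \eqref{3.11} (so that $\|H^{1/2}u\|_{L^2}^{3+b}$ appears on the right), factoring $\|H^{1/2}u\|_{L^2}^{3+b}=\|H^{1/2}u\|_{L^2}^{2s_c}\|H^{1/2}u\|_{L^2}^{2}$, and then using the correctly oriented bound $\|H^{1/2}u\|_{L^2}^{2}\le\|\nabla u\|_{L^2}^{2}$ to pull out the factor $\|\nabla u\|_{L^2}^{2}$. Part (ii) then applies directly with the sharp constant. Your note that $C_{GN}^{0}=\tfrac{4}{(3+b)\mathcal K}$ is also only valid for $V\ge0$; when $V\le0$ it is $C_{GN}$ (the constant in \eqref{3.11}) that equals $\tfrac{4}{(3+b)\mathcal K}$, cf.\ \eqref{4.10}, which is exactly why the potential version of Gagliardo--Nirenberg is the right tool in that case.
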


\begin{proof}
(i) By the energy conservation law, we obtain
\begin{align}\label{4.15}
E(u_{0})=E(u(t))=\frac{1}{2}\|H^{\frac{1}{2}}u(t)\|_{L^{2}}^{2}-\frac{1}{4}\big\||x|^{-b}|u(t)|^{4}\big\|_{L^{1}}
\end{align}
By the Gagliardo-Nirenberg inequality (with $C_{GN}=\frac{4}{(3+b)\mathcal{K}}$) and \eqref{4.3}, we obtain
\begin{align}
\begin{split}
\big\||x|^{-b}|u(t)|^{4}\big\|_{L^{1}}&\leq \frac{4}{(3+b)\mathcal{K}}\|u(t)\|_{L^{2}}^{1-b}\|H^{\frac{1}{2}}u(t)\|_{L^{2}}^{3+b}\\
&=\frac{4}{(3+b)\mathcal{K}}\|u(t)\|_{L^{2}}^{2(1-s_{c}}\|H^{\frac{1}{2}}u(t)\|_{L^{2}}^{2s_{c}}\|H^{\frac{1}{2}}u(t)\|_{L^{2}}^{2}\\
&<\frac{4}{(3+b)}\|H^{\frac{1}{2}}u(t)\|_{L^{2}}^{2},
\end{split}
\end{align}
which combing with \eqref{4.15} gives
\begin{align}\label{4.17}
E(u_{0})=E(u(t))>\frac{1}{2}\|H^{\frac{1}{2}}u(t)\|_{L^{2}}^{2}-\frac{1}{(3+b)}\|H^{\frac{1}{2}}u(t)\|_{L^{2}}^{2}
=\frac{s_{c}}{3+b}\|H^{\frac{1}{2}}u(t)\|_{L^{2}}^{2}.
\end{align}
On the other hand, it is obvious that
\begin{align}\label{4.18}
E(u_{0})=E(u(t))\leq \frac{1}{2}\|H^{\frac{1}{2}}u(t)\|_{L^{2}}^{2}.
\end{align}
Connecting \eqref{4.17} with \eqref{4.18} gives \eqref{4.12}.

(ii) By the second inequality in (i),
\begin{align}\label{4.19}
\|H^{\frac{1}{2}}u(t)\|_{L^{2}}^{2s_{c}}<\Big(\frac{3+b}{s_{c}}\Big)^{s_{c}}E(u_{0})^{s_{c}}.
\end{align}
Multiplying both sides of \eqref{4.19} by $M(u_{0})^{1-s_{c}}=\|u_{0}\|_{L^{2}}^{2(1-s_{c})}$ and using \eqref{4.6} and \eqref{4.9} yield that
\begin{align}\label{4.20}
\begin{split}
\|u_{0}\|_{L^{2}}^{2(1-s_{c})}\|H^{\frac{1}{2}}u(t)\|_{L^{2}}^{2s_{c}}
&<\Big(\frac{3+b}{s_{c}}\Big)^{s_{c}}E(u_{0})^{s_{c}}\|u_{0}\|_{L^{2}}^{2(1-s_{c})}\\
&=\Big(\frac{3+b}{s_{c}}\Big)^{s_{c}}\mathcal{E}\frac{E(u_{0})^{s_{c}}\|u_{0}\|_{L^{2}}^{2(1-s_{c})}}{\mathcal{E}}\\
&=\omega\mathcal{K}.
\end{split}
\end{align}

(iii) If $V\geq 0$, using the "free" Gagliardo-Nirenberg inequality, \eqref{4.13}, $\|H^{\frac{1}{2}}u(t)\|_{L^{2}}^{2}\geq \|\nabla u(t)\|_{L^{2}}^{2}$,
the equivalence norm \eqref{2.2} and \eqref{4.12} successively gives
\begin{align}\label{4.21}
\begin{split}
8\|\nabla u(t)\|_{L^{2}}^2-2(3+b)\big\||x|^{-b}|u(t)|^{4}\big\|_{L^{1}}
&\geq 8\|\nabla u(t)\|_{L^{2}}^2-\frac{8}{\mathcal{K}}\|u(t)\|_{L^{2}}^{2(1-s_{c})}\|\nabla u(t)\|_{L^{2}}^{2(1+s_{c})}\\
&\geq 8\|\nabla u(t)\|_{L^{2}}^2-\frac{8}{\mathcal{K}}\|u(t)\|_{L^{2}}^{2(1-s_{c})}\|H^{\frac{1}{2}} u(t)\|_{L^{2}}^{2s_{c}}\|\nabla u(t)\|_{L^{2}}^{2}\\
&>8(1-\omega)\|\nabla u\|_{L^2}^2\sim \|H^{\frac{1}{2}} u(t)\|_{L^{2}}^{2}\sim E(u_{0}).
\end{split}
\end{align}

If $V\leq 0$, using Gagliardo-Nirenberg inequality, \eqref{4.13}, $\|H^{\frac{1}{2}}u(t)\|_{L^{2}}^{2}\leq \|\nabla u(t)\|_{L^{2}}^{2}$,
the equivalence norm \eqref{2.2} and \eqref{4.12} successively gives
\begin{align*}
\begin{split}
8\|\nabla u(t)\|_{L^{2}}^2-2(3+b)\big\||x|^{-b}|u(t)|^{4}\big\|_{L^{1}}
&\geq 8\|\nabla u(t)\|_{L^{2}}^2-\frac{8}{\mathcal{K}}\|u(t)\|_{L^{2}}^{2(1-s_{c})}\|H^{\frac{1}{2}} u(t)\|_{L^{2}}^{2(1+s_{c})}\\
&\geq 8\|\nabla u(t)\|_{L^{2}}^2-\frac{8}{\mathcal{K}}\|u(t)\|_{L^{2}}^{2(1-s_{c})}\|H^{\frac{1}{2}} u(t)\|_{L^{2}}^{2s_{c}}\|\nabla u(t)\|_{L^{2}}^{2}\\
&>8(1-\omega)\|\nabla u\|_{L^2}^2\sim \|H^{\frac{1}{2}} u(t)\|_{L^{2}}^{2}\sim E(u_{0}).
\end{split}
\end{align*}
\end{proof}

Finally, we give the result about existence of wave operators, which will be used to established the scattering theory.
Before giving its statement, we need Lemma \ref{lemFG}, which is namely Lemma 5.2 in \cite{FG1} with $e^{-itH}$
in place of $e^{it\Delta}$ since the operator $e^{-itH}$ satisfies the same dispersive estimates (see Lemma 2.1 in \cite{H}).

\begin{lemma}\label{lemFG}
Let $0<b<1$. If $f$ and $g\in H^1({\bf R}^{3})$, then there exists some $\frac{12}{3-b}<r<6$ such that

(i) $\big\||x|^{-b}|f|^{3}g\big\|_{L^{1}}\leq c\|f\|_{L^{4}}^{3}\|g\|_{L^{4}}+c\|f\|_{L^{r}}^{3}\|g\|_{L^{r}}$;

(ii) $\big\||x|^{-b}|f|^{3}g\big\|_{L^{1}}\leq c\|f\|_{H^{1}}^{3}\|g\|_{H^{1}}$;

(iii) $\lim_{|t|\rightarrow+\infty}\big\||x|^{-b}|e^{-itH}f|^{3}g\big\|_{L_{x}^{1}}=0$.
\end{lemma}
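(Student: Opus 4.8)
The plan is to prove (i) by splitting the singular weight $|x|^{-b}$ across the unit ball, to read off (ii) from (i) via Sobolev embedding, and to obtain the decay (iii) by feeding the dispersive decay of $U(t)f$ into the inequality (i). Throughout, $U(t)=e^{it\Delta}$ denotes the free Schr\"odinger propagator.

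For (i), decompose $|x|^{-b}=|x|^{-b}\chi_{\{|x|>1\}}+|x|^{-b}\chi_{\{|x|\le1\}}$ and treat the two regions separately. On the exterior $\{|x|>1\}$ the weight is bounded by $1$, so H\"older with the four exponents $(4,4,4,4)$ gives $\int_{\{|x|>1\}}|f|^3|g|\le\|f\|_{L^4}^3\|g\|_{L^4}$, producing the first term. On the ball $B=\{|x|\le1\}$ apply H\"older with $|x|^{-b}\in L^{\gamma}(B)$ and $|f|^3g\in L^{\gamma'}(B)$, choosing $\gamma'=r/4$, so that $1/\gamma=1-4/r$ and $\gamma=r/(r-4)$. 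The weight satisfies $|x|^{-b}\in L^\gamma(B)$ exactly when $b\gamma<3$, i.e. $\gamma<3/b$; substituting $\gamma=r/(r-4)$ this is equivalent to $r>\tfrac{12}{3-b}$, which is precisely the admissible lower endpoint, while $r<6$ ensures $L^r$ remains accessible to $H^1$. A further H\"older step gives $\||f|^3g\|_{L^{r/4}}\le\|f\|_{L^r}^3\|g\|_{L^r}$, producing the second term. Any $r$ in the (nonempty, since $0<b<1$) interval $(\tfrac{12}{3-b},6)$ works.

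For (ii), since $\tfrac{12}{3-b}<r<6$ and also $2\le4<6$, the Sobolev embedding $H^1({\bf R}^3)\hookrightarrow L^p$ for $2\le p\le6$ bounds both $\|\cdot\|_{L^4}$ and $\|\cdot\|_{L^r}$ by $\|\cdot\|_{H^1}$, and (i) then yields $\||x|^{-b}|f|^3g\|_{L^1}\le c\|f\|_{H^1}^3\|g\|_{H^1}$ directly. For (iii), apply (i) with $U(t)f$ in place of $f$ (legitimate since $U(t)$ preserves $H^1$) to obtain
\[\||x|^{-b}|U(t)f|^3g\|_{L^1}\le c\|U(t)f\|_{L^4}^3\|g\|_{L^4}+c\|U(t)f\|_{L^r}^3\|g\|_{L^r},\]
where $\|g\|_{L^4}$ and $\|g\|_{L^r}$ are fixed. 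It then suffices to show $\|U(t)f\|_{L^p}\to0$ as $|t|\to\infty$ for $p=4$ and $p=r$.

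The main obstacle is exactly this last decay: the dispersive estimate $\|U(t)\phi\|_{L^p}\lesssim|t|^{-3(\frac12-\frac1p)}\|\phi\|_{L^{p'}}$ needs $L^{p'}$ data with $p'<2$, a norm that $H^1$ does not control, so one cannot read the decay off directly. The remedy is a density-plus-uniform-bound argument: given $\epsilon>0$, pick $\phi$ (Schwartz, say) with $\|f-\phi\|_{H^1}<\epsilon$; then the dispersive estimate forces $\|U(t)\phi\|_{L^p}\to0$, while the error is controlled uniformly in $t$ by $\|U(t)(f-\phi)\|_{L^p}\lesssim\|U(t)(f-\phi)\|_{\dot H^{s_p}}=\|f-\phi\|_{\dot H^{s_p}}\le\|f-\phi\|_{H^1}$, using that $U(t)$ is an isometry on $\dot H^{s_p}$ and the homogeneous Sobolev embedding $\dot H^{s_p}\hookrightarrow L^p$ with $s_p=3(\tfrac12-\tfrac1p)$. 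Here one must check $s_p\in(0,1)$ for both $p=4$ (giving $s_p=\tfrac34$) and $p=r<6$ (giving $s_p<1$), so that the $\dot H^{s_p}$ norm is indeed dominated by the $H^1$ norm. Hence $\limsup_{|t|\to\infty}\|U(t)f\|_{L^p}\lesssim\epsilon$, and letting $\epsilon\to0$ gives the decay, completing (iii).
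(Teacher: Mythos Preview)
Your proof is correct. The paper does not actually prove this lemma but imports it from \cite{FG1} with citation only; your argument---splitting the weight at the unit sphere, applying H\"older with $\gamma=r/(r-4)$ on the ball, then reading off (ii) from Sobolev embedding and (iii) via density plus dispersive decay---is the standard route and is what one expects the cited proof to look like.
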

\vskip0.5cm

\begin{proposition}\label{pro4.3}
If $V$ is radially symmetric and satisfies \eqref{1.2} and \eqref{1.3}, $V\geq 0$ or $V\leq 0$, and $0<b<1$.
 Suppose radial function $ \psi^\pm\in H^1({\bf R}^{3})$ and
\begin{align}\label{4.22}
(\frac{1}{2}\|H^{\frac{1}{2}}\psi^\pm\|_{L^{2}}^2)^{s_{c}}\|\psi^\pm\|_{L^{2}}^{2(1-s_c)}<\mathcal{E}.
\end{align}
Then there exists a unique radial function $v_0\in H^1({\bf R}^{3})$ such that the solution $v$ of
\eqref{1.1} with initial data $v_0$ obeys the assumptions \eqref{4.1} and \eqref{4.2} and satisfies
\begin{align}\label{4.23}
\lim_{t\rightarrow\pm\infty}\|v(t)-e^{itH}\psi^\pm\|_{H^1({\bf R}^{3})}=0.
\end{align}
Moreover,
\begin{align}\label{4.24}
\|v(t)\|_{S(\dot{H}^{s_{c}})}<\infty\;\text{and}\; \|\langle\nabla\rangle v\|_{S(L^{2})}<\infty.
\end{align}
\end{proposition}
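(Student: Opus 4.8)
The plan is to solve the final-value problem at $t=+\infty$ (the case $t=-\infty$ being identical), continue the resulting solution back to $t=0$ by the local theory, and then read off the hypotheses \eqref{4.1}, \eqref{4.2} by letting $t\to+\infty$ in the conservation laws. First I would fix $T>0$ and look for $v$ on $[T,\infty)$ solving
\[
v(t)=e^{-itH}\psi^{+}-i\int_{t}^{\infty}e^{-i(t-s)H}\big(|x|^{-b}|v|^{2}v\big)(s)\,ds
\]
by contraction in the ball
\[
\big\{v:\ \|v\|_{S(\dot H^{s_{c}},[T,\infty))}\leq 2\|e^{-itH}\psi^{+}\|_{S(\dot H^{s_{c}},[T,\infty))},\ \ \|\langle\nabla\rangle v\|_{S(L^{2},[T,\infty))}\leq 2c\|\psi^{+}\|_{H^{1}}\big\}
\]
with metric $d(u,v)=\|u-v\|_{S(L^{2},[T,\infty))}+\|u-v\|_{S(\dot H^{s_{c}},[T,\infty))}$, exactly as in Proposition \ref{pro2.5}. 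By the Strichartz estimate \eqref{2.1}, the norm equivalence \eqref{2.2} and the nonlinear estimates \eqref{2.5}, \eqref{2.6}, \eqref{2.7}, closing the contraction only requires $\|e^{-itH}\psi^{+}\|_{S(\dot H^{s_{c}},[T,\infty))}$ to be small; since $\psi^{+}\in H^{1}\hookrightarrow\dot H^{s_{c}}$, \eqref{2.1} gives $\|e^{-itH}\psi^{+}\|_{S(\dot H^{s_{c}},\mathbb R)}<\infty$, so this quantity tends to $0$ as $T\to\infty$ and we may fix a large $T=T_{0}$. As $V$ and $|x|^{-b}$ are radial and $\psi^{+}$ is radial, the fixed point $v$ is radial. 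Running the same estimates on $[t,\infty)$ and using \eqref{2.2} with \eqref{2.5},
\[
\|v(t)-e^{-itH}\psi^{+}\|_{H^{1}}\lesssim \|\langle\nabla\rangle(|x|^{-b}|v|^{2}v)\|_{S'(L^{2},[t,\infty))}\lesssim \|\langle\nabla\rangle v\|_{S(L^{2},[t,\infty))}^{2}\,\|v\|_{S(\dot H^{s_{c}},[t,\infty))}\longrightarrow 0
\]
as $t\to\infty$, which upgrades the scattering to $H^{1}$ and yields \eqref{4.23}.

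Next I would set $v_{0}:=v(0)$: since $v(T_{0})\in H^{1}({\bf R}^{3})$ is radial, the local well-posedness Lemma \ref{lem2.2} continues $v$ to a solution of \eqref{1.1} on a maximal interval containing $[0,T_{0}]$, so $v_{0}$ is a well-defined radial datum and $M(v(t)),E(v(t))$ are constant along $v$. Using $v(t)\to e^{-itH}\psi^{+}$ in $H^{1}$, the unitarity of $e^{-itH}$ on $L^{2}$ and its commutation with $H$ give $M(v_{0})=\|\psi^{+}\|_{L^{2}}^{2}$ and $\|H^{1/2}v(t)\|_{L^{2}}^{2}\to\|H^{1/2}\psi^{+}\|_{L^{2}}^{2}$, while Lemma \ref{lemFG} together with the linear scattering Lemma \ref{lem2.1} give $\||x|^{-b}|v(t)|^{4}\|_{L^{1}}\to 0$; hence $E(v_{0})=\tfrac12\|H^{1/2}\psi^{+}\|_{L^{2}}^{2}$ and
\[
M(v_{0})^{1-s_{c}}E(v_{0})^{s_{c}}=\big(\tfrac12\|H^{1/2}\psi^{+}\|_{L^{2}}^{2}\big)^{s_{c}}\|\psi^{+}\|_{L^{2}}^{2(1-s_{c})}<\mathcal E
\]
by the hypothesis \eqref{4.22}, which is \eqref{4.1}. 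Writing $L:=\|\psi^{+}\|_{L^{2}}^{2(1-s_{c})}\|H^{1/2}\psi^{+}\|_{L^{2}}^{2s_{c}}$ and recalling from \eqref{4.6}, \eqref{4.9} that $\mathcal E=(\tfrac{s_{c}}{3+b})^{s_{c}}\mathcal K$, the inequality above reads $L<\big(\tfrac{2s_{c}}{3+b}\big)^{s_{c}}\mathcal K$, and since $\tfrac{2s_{c}}{3+b}=\tfrac{1+b}{3+b}<1$ for $0<b<1$ we conclude $L<\mathcal K$. Because $\|v(t)\|_{L^{2}}^{2(1-s_{c})}\|H^{1/2}v(t)\|_{L^{2}}^{2s_{c}}\to L<\mathcal K$, there is some $t_{*}$ at which this quantity is $<\mathcal K$, so Theorem \ref{th4.1}(i), applied with initial time $t_{*}$ and using the already-verified \eqref{4.1}, shows that $v$ is global and $\|v(t)\|_{L^{2}}^{2(1-s_{c})}\|H^{1/2}v(t)\|_{L^{2}}^{2s_{c}}<\mathcal K$ for all $t$; at $t=0$ this is \eqref{4.2}.

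Finally, \eqref{4.24} follows by gluing: the contraction gives $\|v\|_{S(\dot H^{s_{c}},[T_{0},\infty))}+\|\langle\nabla\rangle v\|_{S(L^{2},[T_{0},\infty))}<\infty$, while on the compact interval $[0,T_{0}]$ the local theory Lemma \ref{lem2.2}, together with H\"{o}lder in time and Sobolev embedding (using, e.g., the pair $(\infty,\tfrac{6}{2-b})\in\Lambda_{s_{c}}$ for the $S(\dot H^{s_{c}})$ part), gives finiteness there as well; moreover $\sup_{t}\|v(t)\|_{H^{1}}<\infty$ by Lemma \ref{lem4.2}(i) and mass conservation, so Proposition \ref{pro2.6} independently re-confirms the $H^{1}$-scattering, and the limit displayed in the first paragraph is precisely \eqref{4.23}. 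The step requiring the most care is the fixed-point construction near $t=+\infty$ together with the upgrade of the convergence $v(t)\to e^{-itH}\psi^{+}$ from the $S(\dot H^{s_{c}})$ topology to the full $H^{1}$ topology; the remainder is bookkeeping with the conservation laws, the elementary inequality $\tfrac{1+b}{3+b}<1$, and the dichotomy of Theorem \ref{th4.1}.
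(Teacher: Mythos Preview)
Your proof is correct and follows essentially the same strategy as the paper: a contraction near $t=+\infty$ via the small-data machinery of Proposition~\ref{pro2.5}, $H^{1}$ convergence of the Duhamel tail, identification of the asymptotic mass and energy through Lemma~\ref{lemFG}, the algebraic observation $\mathcal E=(\tfrac{s_c}{3+b})^{s_c}\mathcal K$ with $\tfrac{2s_c}{3+b}<1$, and finally Theorem~\ref{th4.1}(i) to propagate globally and back to $t=0$. One cosmetic point: you write $v_{0}:=v(0)$ before having established that the maximal interval of existence reaches $t=0$; the paper first verifies \eqref{4.1} and \eqref{4.2} at the large time $T$ and only then evolves backward, which is the cleaner logical order, though your later invocation of Theorem~\ref{th4.1}(i) at a time $t_{*}\geq T_{0}$ does supply the missing globality.
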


\begin{proof}
 Similar to the small data theory Proposition \ref{pro2.5},
we can solve the integral equation
\begin{align}\label{4.25}
v(t)=e^{-itH}\psi^+ -i\int_t^\infty e^{-i(t-s)H}(|x|^{-b}|v|^{2}v)(s)ds
\end{align}
for $t\geq T$ with $T$ large.

 In fact, there exists some large $T$ such that
$\|e^{-itH}\psi^+\|_{S(\dot{H}^{s_{c}}, [T,\infty))}\leq \delta_{sd},$
where $\delta_{sd}$ is defined by Proposition \ref{pro2.5}.
Then, the same arguments as in Proposition \ref{pro2.5} give a unique solution
$v\in C([T,\infty),H^1)$ of \eqref{4.25}.
Moreover, we also have
\begin{align}\label{4.26}
\|v\|_{S(\dot{H}^{s_{c}}, [T,\infty))}\leq2\delta_{sd},\quad
{\rm and}\quad
 \|\langle\nabla\rangle v\|_{S(L^{2}, [T, +\infty))} \leq 2\|\psi^{+}\|_{H^{1}}.
\end{align}
Thus by \eqref{2.1}, \eqref{2.2}, \eqref{2.13}, \eqref{2.14} and \eqref{2.17},
\begin{align}\label{4.27}
\begin{split}
&\| v-e^{-itH}\psi^+\|_{L_{[T,\infty)}^{\infty}H_{x}^{1}}
=\left\|\int_t^\infty e^{-i(t-s)H}(|x|^{-b}|v|^{2}v)(s)ds\right\|_{L_{[T,\infty)}^{\infty}H_{x}^{1}}\\
&\lesssim \|\langle\nabla\rangle(|x|^{-b}|u|^{2}u)\|_{L_{[T,+\infty)}^{2}L_{x}^{\frac{6}{5}}}
\lesssim \||\nabla|^{s_{c}}u\|_{L_{[T,+\infty)}^{4}L_{x}^{3}}\|u\|_{L_{[T,+\infty)}^{\frac{4}{1-b}}L_{x}^{6}}
\|\langle\nabla\rangle u\|_{L_{[T,+\infty)}^{\frac{4}{b}}L_{x}^{\frac{6}{3-b}}}\\
&\lesssim \|\psi^{+}\|_{H^{1}}^{2}\delta_{sd}.
\end{split}
\end{align}
As $T\rightarrow \infty$, $\delta_{sd}>0$ can be chosen small enough, so we have
\begin{align}\label{4.28}
\| v-e^{-itH}\psi^+\|_{L_{[T,\infty)}^{\infty}H_{x}^{1}} \rightarrow 0
\ \  {\rm as} ~T\rightarrow \infty,
\end{align}
which implies $v(t)-e^{-itH}\psi^+\rightarrow0 $ in $H^1({\bf R}^{3})$ as $t\rightarrow+\infty$. Hence, we have
\begin{align}\label{4.29}
\lim_{t\rightarrow+\infty}\|u(t)\|_{L_{x}^{2}}=\|\psi^{+}\|_{L_{x}^{2}}
\end{align}
and
\begin{align}\label{4.30}
\lim_{t\rightarrow+\infty}\|H^{\frac{1}{2}}u(t)\|_{L_{x}^{2}}=\|H^{\frac{1}{2}}\psi^{+}\|_{L_{x}^{2}}.
\end{align}
By the mass conservation, we have $\|u(t)\|_{L_{x}^{2}}=\|u(T)\|_{L_{x}^{2}}$ for all $t\geq T$.
So from \eqref{4.29}, we deduce $\|u(T)\|_{L_{x}^{2}}=\|\psi^{+}\|_{L_{x}^{2}}$.

On the other hand, by Lemma \ref{lemFG}
\begin{align}\label{4.31}
\begin{split}
\big\||x|^{-b}|u(t)|^{4}\big\|_{L_{x}^{1}}
&\lesssim \big\||x|^{-b}|u(t)-e^{-itH}\psi^{+}|^{4}\big\|_{L_{x}^{1}}+\big\||x|^{-b}|e^{-itH}\psi^{+}|^{4}\big\|_{L_{x}^{1}}\\
&\lesssim \|u(t)-e^{-itH}\psi^{+}\|_{H_{x}^{1}}^{4}+\big\||x|^{-b}|e^{-itH}\psi^{+}|^{4}\big\|_{L_{x}^{1}}.
\end{split}
\end{align}
Using \eqref{4.28} and Lemma \ref{lemFG} again gives
\begin{align}\label{4.32}
\lim_{t\rightarrow+\infty}\big\||x|^{-b}|u(t)|^{4}\big\|_{L_{x}^{1}}=0.
\end{align}

Thus combining \eqref{4.30} and \eqref{4.32}, we obtain that
\begin{align}\label{4.33}
\begin{split}
M(v(T))^{ 2-s_c }E(v(T))^{s_{c}}
&= \lim_{t\rightarrow+\infty}M(v(t))^{ 2-s_c}E(v(t))^{s_{c}}\\
&=\|\psi^+\|_{L^{2}}^{2(2-s_c)}\frac{1}{2^{s_{c}}}\|H^{\frac{1}{2}}\psi^\pm\|_{L^2}^{2s_{c}}<\mathcal{E}.
\end{split}
\end{align}
Moreover, we note that
\begin{align}\label{4.34}
&\lim_{t\rightarrow\infty}\|v(t)\|_{L_{x}^2}^{2(2-s_c)}\|H^{\frac{1}{2}} v(t)\|_{L_{x}^2}^{2s_{c}}
=\|\psi^+\|_{L_{x}^2}^{(2-s_c)}\|H^{\frac{1}{2}}\psi^+\|_{L_{x}^2}^{2s_{c}}\nonumber\\
&<2^{s_{c}}\mathcal{E}
=(\frac{2s_{c}}{3+b})^{s_{c}}\mathcal{K}<\mathcal{K}
\end{align}

Therefore, for $T$ large enough, $v(T)$ satisfies \eqref{4.1} and \eqref{4.2}, which, due to Theorem \ref{4.1}, implies that $v(t)$ exists globally in $H^{1}({\bf R}^{3})$.
Thus, we can evolve $v(t)$ from $T$ back to the initial time 0. By the same way, we can show \eqref{4.23} for the negative time. In addition, \eqref{4.26}
combined  with local theory implies \eqref{4.24}.

\end{proof}
 \vskip0.5cm

\medskip

\section{Existence and compactness of a critical element}
\setcounter{equation}{0}

\begin{definition}\label{def5.1}
We say that $SC(u_0)$ holds if for radial $u_0\in H^1({\bf R}^{3})$ satisfying
$\|u_{0}\|^{2(1-s_c)}_{L^{2}}\|H^{\frac{1}{2}}
u_{0}\|_{L^{2}}^{2s_{c}}<\mathcal{K}$ and
$E(u_{0})^{s_{c}}M(u_{0})^{1-s_c}<\mathcal{E}$,
 the corresponding solution $u$  of \eqref{1.1} with the maximal
interval of existence $I=(-\infty,+\infty)$ satisfies
\begin{align}\label{5.1}
\|u\|_{S(\dot{H}^{s_{c}})}<+\infty.
\end{align}
\end{definition}

We first claim  that  there exists $\delta>0$ such that if
$E(u_{0})^{s_{c}}M(u_{0})^{1-s_c}<\delta$ and
$\|u_{0}\|^{2(1-s_c)}_{L^{2}}\|H^{\frac{1}{2}}
u_{0}\|_{L^{2}}^{2s_{c}}<\mathcal{K},$ then
\eqref{5.1} holds.
In fact, the Strichartz estimate \eqref{2.1},
the norm equivalence \eqref{2.2} and \eqref{4.12}, we have
\begin{align}\label{5.2}
\begin{split}
\|e^{itH}u_{0}\|_{S(\dot{H}^{s_{c}})}^{2}
&\lesssim \||\nabla|^{s_{c}}u_{0}\|_{L^{2}}^{2}
\lesssim\|u_{0}\|_{L^{2}}^{2(1-s_{c})}\|\nabla u_{0}\|_{L^{2}}^{2s_{c}}\\
&\sim \|u_{0}\|_{L^{2}}^{2(1-s_{c})}\|H^{\frac{1}{2}} u_{0}\|_{L^{2}}^{2s_{c}}
\sim E(u_{0})^{s_{c}}M(u_{0})^{1-s_{c}}.
\end{split}
\end{align}
So if $E(u_{0})^{s_{c}}M(u_{0})^{1-s_c}<\delta$ with sufficiently small $\delta>0$, 
we have that  $\|e^{-itH}u_{0}\|_{S(\dot{H}^{s_{c}})}\leq \delta_{sd}$. Therefore, 
it follows from Proposition \ref{pro2.5} that \eqref{5.1} holds for all
sufficiently small $\delta>0$, which implies scattering by  Proposition \ref{pro2.6}.

Now for each $\delta>0$, we define the set $S_\delta$ to be the collection
of all such initial data in $H^1$ :
\begin{align}\label{5.3}
S_\delta=\{u_0\in H^1({\bf R}^{3}):\ \  E(u_{0})^{s_{c}}M(u_{0})^{1-s_c}<\delta \ \  and \ \
\|u_{0}\|^{2(1-s_c)}_{L^{2}}\|H^{\frac{1}{2}}
u_{0}\|_{L^{2}}^{2s_{c}}<\mathcal{K}\}.
\end{align}
We also define
\begin{align}\label{5.4}
\mathcal{E}_c=\sup\{\delta:\ \ u_0\in S_\delta\Rightarrow SC(u_0)\ \  holds \}.
\end{align}
If $\mathcal{E}_c=\mathcal{E}$, then we are done. Thus we assume now
\begin{align}\label{5.5}
\mathcal{E}_c<\mathcal{E}.
\end{align}
Our goal in this section is to show the existence of an $H^1$-
solution $u_c$ of \eqref{1.1} with the initial data $u_{c,0}$ such that
\begin{align}\label{5.6}
\|u_{c,0}\|^{2(1-s_c)}_{L^{2}}\|H^{\frac{1}{2}}
u_{c,0}\|_{L^{2}}^{2s_{c}}<\mathcal{K},
\end{align}
\begin{align}\label{5.7}
E(u_{c,0})^{s_{c}}M(u_{c,0})^{1-s_c}=\mathcal{E}_c
\end{align}
and
$ SC(u_{c,0})$ does not hold. Moreover, we  show that if
$\|u_c\|_{S(\dot{H}^{s_{c}})}=\infty$, then
$K=\{u_c(x,t)|t\in {\bf R}\}$ is precompact in  $H^1({\bf R}^{3})$.

Prior to fulfilling  our main task, we first state
 the linear profile decomposition  associated with a perturbed linear propagator $e^{itH_{r_{n}}}$,
 with $$H_{r_{n}}=-\Delta+V_{r_n}, \ \ \ \ V_{r_n}(x)=\frac1{r_n^2}V(\frac x{r_n}),$$
 which was established by Hong \cite{H}.
 The profile decomposition
associated with the free linear propagator $e^{it\Delta}$ \cite{DHR, HR} was established by using the concentration
compactness principle in the spirit of Keraani \cite{Ker} and Kenig and Merle \cite{Kenig}.

\begin{proposition}\label{pro5.2}
If $V$  is radial and satisfies \eqref{1.2}, \eqref{1.3} and $|x||\nabla V|\in L^{\frac{3}{2}}$.
Let $\phi_{n}(x)$ be radial and uniformly
bounded  in $H^{1}({\bf R}^{3})$, and  $r_{n}=1, r_{n}\rightarrow 0$ or $r_{n}\rightarrow\infty$.
Then for each $M$ there exists a
subsequence of $\phi_{n}$, which is denoted by itself, such that the
following statements hold. \\
(1) For each $1\leq j\leq M$, there exists
(fixed in n) a profile $\psi^{j}(x)$ in $H^1({\bf R}^{3})$ and
 a sequence (in $n$) of time
shifts $t_{n}^{j}$, and there exists a sequence (in $n$) of
remainders $W_{n}^{M}(x)$ in $H^1({\bf R}^{3})$  such that
\begin{align}\label{5.8}
\phi_{n}(x)=\sum_{j=1}^{M}e^{it_{n}^{j}H_{r_{n}}}\psi^{j}(x)+W_{n}^{M}(x).
\end{align}
(2) The time  sequences have a pairwise divergence property, i.e.,  for $1\leq j\neq k\leq M$,
\begin{align}\label{5.9}
\lim_{n\rightarrow+\infty}
|t_{n}^{j}-t_{n}^{k}|=+\infty.
\end{align}
(3) The remainder sequence has the following asymptotic smallness
property:
\begin{align}\label{5.10}
\lim_{M\rightarrow+\infty}[\lim_{n\rightarrow+\infty}\|e^{-itH_{r_{n}}}W_{n}^{M}\|_{S(\dot{H}^{s_{c}})}]=0.
\end{align}
(4) For each fixed $M$, we have the asymptotic
Pythagorean expansion as follows
\begin{align}\label{5.11}
\|\phi_{n}\|_{L^{2}}^{2}
=\sum_{j=1}^{M}\|\psi^{j}\|_{L^{2}}^{2}+\|W_{n}^{M}\|_{L^{2}}^{2}+o_{n}(1),
\end{align}
\begin{align}\label{5.12}
\|H_{r_{n}}^{\frac{1}{2}}\phi_{n}\|_{L^{2}}^{2}
=\sum_{j=1}^{M}\|H_{r_{n}}^{\frac{1}{2}}\psi^{j}\|_{L^{2}}^{2}
+\|H_{r_{n}}^{\frac{1}{2}}W_{n}^{M}\|_{L^{2}}^{2}+o_{n}(1),
\end{align}
where $o_{n}(1)\rightarrow0$ as $n\rightarrow+\infty$.
\end{proposition}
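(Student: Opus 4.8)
The plan is to reproduce, with the free propagator replaced by $e^{itH_{r_n}}$, the inductive profile-extraction scheme of Keraani \cite{Ker} and Kenig--Merle \cite{Kenig}; indeed the statement concerns only the linear flow $e^{itH_{r_n}}$ acting on $H^1$ data, so it is insensitive to the value of $b$, and it is precisely the decomposition obtained by Hong \cite{H} for $b=0$. The structural point that makes all estimates uniform in $n$ is that the scaling $V\mapsto V_{r_n}$ preserves the global Kato norm, so that $\|V_{r_n}\|_K=\|V\|_K<4\pi$ for every $n$; consequently the Strichartz inequalities \eqref{2.1} and the norm equivalences \eqref{2.2} hold for $H_{r_n}$ with constants independent of $n$. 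The only place the potential genuinely intervenes is that the three scaling regimes $r_n\equiv1$, $r_n\to0$ and $r_n\to\infty$ have to be treated separately --- in each of them $e^{itH_{r_n}}$ must be compared, on the relevant time scales, with a reference (free or potential) evolution --- but this comparison is exactly the one already carried out in \cite{H} and is unaffected by the inhomogeneity.

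The first and main ingredient I would prove is an inverse Strichartz lemma adapted to $H_{r_n}$: if $\sup_n\|g_n\|_{H^1}\le C$ and $\liminf_n\|e^{-itH_{r_n}}g_n\|_{S(\dot H^{s_c})}\ge\varepsilon>0$, then, after passing to a subsequence, there are time shifts $\{t_n\}\subset{\bf R}$ and a nonzero profile $\psi\in H^1({\bf R}^3)$ with $e^{it_nH_{r_n}}g_n\rightharpoonup\psi$ weakly in $H^1$ and a quantitative lower bound $\|\psi\|_{H^1}\gtrsim_C\varepsilon^{\gamma}$ for some $\gamma>0$. To obtain this I would interpolate, for a single admissible pair in $\Lambda_{s_c}$, the exotic Strichartz norm between a Strichartz norm carrying one extra derivative --- controlled by $\|g_n\|_{H^1}$ through \eqref{2.1} and \eqref{2.2} --- and $L^\infty_{t,x}$, so that the lower bound on $S(\dot H^{s_c})$ forces $\|e^{-it_nH_{r_n}}g_n\|_{L^\infty_x}\gtrsim_C\varepsilon^{\gamma}$ at suitable times $t_n$. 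Since the $g_n$ are radial and bounded in $H^1$, the radial Sobolev embedding localizes the mass of $e^{-it_nH_{r_n}}g_n$ to a fixed ball, and then the compact embedding $H^1(B)\hookrightarrow L^2(B)$, together with the convergence of $e^{itH_{r_n}}$ to its reference evolution in the degenerate regimes, yields the nonzero weak limit $\psi$ with the asserted bound.

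Granting this lemma, I would iterate in the standard way: set $W_n^0:=\phi_n$; given $W_n^{j-1}$, let $A_{j-1}:=\lim_n\|e^{-itH_{r_n}}W_n^{j-1}\|_{S(\dot H^{s_c})}$ along a subsequence; if $A_{j-1}=0$ stop and put the remaining profiles equal to zero, otherwise apply the lemma to produce $\psi^j$ and $t_n^j$, and set $W_n^j:=W_n^{j-1}-e^{-it_n^jH_{r_n}}\psi^j$, which gives \eqref{5.8}. The pairwise divergence \eqref{5.9} is built into the extraction: if two time shifts stayed at bounded distance, the corresponding profiles could be merged into one, contradicting the maximality of the mass peeled off at each step. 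Simultaneously I would establish the Pythagorean expansions \eqref{5.11} and \eqref{5.12}: the $L^2$ identity is the usual orthogonality computation, using the weak convergence $e^{it_n^jH_{r_n}}W_n^j\rightharpoonup0$ and the unitarity of $e^{itH_{r_n}}$ on $L^2$; for the $H_{r_n}^{1/2}$ identity, since $e^{itH_{r_n}}$ commutes with $H_{r_n}$ and is $L^2$-unitary, the matter reduces to the almost-orthogonality $\langle H_{r_n}^{1/2}\psi^j,\,e^{i(t_n^k-t_n^j)H_{r_n}}H_{r_n}^{1/2}\psi^k\rangle\to0$ whenever $|t_n^j-t_n^k|\to\infty$, which follows from the weak dispersive decay of $e^{itH_{r_n}}$, uniform in $n$ by \eqref{2.2} and the free dispersive estimate. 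Feeding \eqref{5.11}--\eqref{5.12} back shows $\sum_j\|\psi^j\|_{H^1}^2\le\limsup_n\|\phi_n\|_{H^1}^2<\infty$, hence $A_j\to0$, which is exactly the asymptotic smallness \eqref{5.10}; a diagonal extraction finally makes the profiles and remainders ``fixed in $n$'' as claimed.

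The hardest point will be the uniformity in $n$ combined with the simultaneous handling of the three scaling regimes: one has to verify that the inverse Strichartz lemma, the local compactness used to pull out each $\psi^j$, and the weak dispersive decay all hold with constants independent of $n$, and that in the cases $r_n\to0,\infty$ the comparison of $e^{itH_{r_n}}$ with an unperturbed propagator is legitimate on the relevant time scales. As this is precisely the content of Hong's analysis in \cite{H}, at that point I would invoke it directly, the passage from $b=0$ to $0<b<1$ playing no role in a statement about the linear evolution.
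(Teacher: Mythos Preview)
Your approach is valid but takes a genuinely different route from the paper's. You propose to rebuild the inductive profile-extraction machinery of Keraani and Kenig--Merle directly for the $n$-dependent propagator $e^{itH_{r_n}}$, proving an inverse Strichartz lemma for $H_{r_n}$ and then iterating. The paper (following Hong \cite{H}) instead starts from the \emph{already known} free profile decomposition $\phi_n=\sum_j e^{-it_n^j\Delta}\psi^j+W_n^M$ and simply rewrites it as $\phi_n=\sum_j e^{it_n^jH_{r_n}}\psi^j+\widetilde W_n^M$, absorbing the differences $e^{-it_n^j\Delta}\psi^j-e^{it_n^jH_{r_n}}\psi^j$ into a new remainder $\widetilde W_n^M$; the entire proof then reduces to checking that these difference terms vanish in the relevant norms, which follows from Duhamel's formula and the scale-invariance of $\|V\|_{L^{3/2}}$ and $\||x|\nabla V\|_{L^{3/2}}$ (for $r_n\to0,\infty$) or from the wave operators of Lemma~\ref{lem2.1} (for $r_n\equiv1$). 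What the paper's route buys is that the profiles $\psi^j$ and time shifts $t_n^j$ are inherited wholesale from the free case, so the pairwise divergence \eqref{5.9} and the $L^2$-Pythagorean expansion come for free, and one never has to take weak limits against an $n$-varying operator. Your direct extraction would work, but note that the step ``$e^{it_nH_{r_n}}g_n\rightharpoonup\psi$ weakly in $H^1$'' is delicate precisely because $H_{r_n}$ moves with $n$: to make sense of a profile fixed in $n$ you would in any case have to pass through the comparison with the reference evolution, at which point you are effectively reproducing the paper's conversion argument inside each extraction step rather than once at the end. Your closing remark that ``this is precisely the content of Hong's analysis'' is slightly off: Hong does not do the direct extraction you describe but rather the same convert-from-free strategy the present paper uses.
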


\begin{proof} The proof is similar to that of Proposition 5.1 in \cite{H}. For the sake of completeness, we give its detail
here. Let's first consider the case $r_{n}\rightarrow 0$ or $r_{n}\rightarrow\infty$.
According to the profile decomposition associated with $e^{it\Delta}$ (see Lemma 5.2 in \cite{HR}), 
there exists a subsequence of $\phi_{n}$, which
is still denoted by itself and satisfies all the properties in Proposition \ref{pro5.2} with $V=0$. For example, \eqref{5.8} 
with $V=0$ is namely
\begin{align}\label{5.13}
\phi_{n}(x)=\sum_{j=1}^{M}e^{-it_{n}^{j}\Delta}\psi^{j}(x)+W_{n}^{M}(x)
\end{align}

In order to get the form of \eqref{5.8}, we can rewrite \eqref{5.13} as
\begin{align}\label{5.14}
\phi_{n}(x)=\sum_{j=1}^{M}e^{it_{n}^{j}H_{r_{n}}}\psi^{j}(x)+{\widetilde W}_{n}^{M}(x),
\end{align}
where
\begin{align}\label{5.15}
{\widetilde W}_{n}^{M}(x)=W_{n}^{M}(x)
+\sum_{j=1}^{M}\Big(e^{-it_{n}^{j}\Delta}\psi^{j}(x)-e^{it_{n}^{j}H_{r_{n}}}\psi^{j}(x)\Big).
\end{align}
Now we start to verify that \eqref{5.14} satisfies the properties \eqref{5.9}-\eqref{5.12}. It's obvious that \eqref{5.9} is true,
so let's look at \eqref{5.10}. We note that $u(t)=e^{it\Delta}u_{0}$ is a solution to the integral equation
\begin{align}\label{5.16}
u(t)=e^{it(\Delta-V_{r_{n}})}u_{0}-i\displaystyle\int_{0}^{t}e^{i(t-s)(\Delta-V_{r_{n}})}(V_{r_{n}}u(s))ds.
\end{align}
Substituting $u_0=W_{n}^{M}$ in the formula \eqref{5.16} yields that
\begin{align}\label{5.17}
\begin{split}
\|e^{-itH_{r_{n}}}W_{n}^{M}\|_{S(\dot{H}^{s_{c}})}
&\leq \|e^{it\Delta}W_{n}^{M}\|_{S(\dot{H}^{s_{c}})}
+\|\displaystyle\int_{0}^{t}e^{i(t-s)(\Delta-V_{r_{n}})}(V_{r_{n}}e^{is\Delta}W_{n}^{M})ds\|_{S(\dot{H}^{s_{c}})}\\
&\lesssim \|e^{it\Delta}W_{n}^{M}\|_{S(\dot{H}^{s_{c}})}
+\|V_{r_{n}}e^{it\Delta}W_{n}^{M}\|_{L_{t}^{\frac{4}{1-b}}L_{x}^{\frac{6}{5}}}\\
&\lesssim \|e^{it\Delta}W_{n}^{M}\|_{S(\dot{H}^{s_{c}})}
+\|V_{r_{n}}\|_{L^{\frac{3}{2}}}\|e^{it\Delta}W_{n}^{M}\|_{L_{t}^{\frac{4}{1-b}}L_{x}^{6}}\\
&=(1+\|V\|_{L^{\frac{3}{2}}})\|e^{it\Delta}W_{n}^{M}\|_{S(\dot{H}^{s_{c}})}\rightarrow 0,
\end{split}
\end{align}
as $n\rightarrow\infty$ and $M\rightarrow\infty$.

Also applying \eqref{5.16}, we obtain
\begin{align}\label{5.18}
\begin{split}
&\|e^{-itH_{r_{n}}}(e^{-it_{n}^{j}\Delta}\psi^{j}-e^{it_{n}^{j}H_{r_{n}}}\psi^{j})\|_{S(\dot{H}^{s_{c}})}\\
&=\|\displaystyle\int_{-t_{n}^{j}}^{0}
e^{-i(t+t_{n}^{j}+s)H_{r_{n}}}(V_{r_{n}}e^{is\Delta}\psi^{j})ds\|_{S(\dot{H}^{s_{c}})}\\
&\lesssim \|V_{r_{n}}e^{-it\Delta}\psi^{j}\|_{L_{t}^{\frac{4}{1-b}}L_{x}^{\frac{6}{5}}}\rightarrow 0,
\end{split}
\end{align}
as $n\rightarrow\infty$, where in the last step we have used
\begin{align}\label{5.19}
\|V_{r_{n}}e^{-it\Delta}\psi^{j}\|_{L_{t}^{\frac{4}{1-b}}L_{x}^{\frac{6}{5}}}
\lesssim \|V_{r_{n}}\|_{L^{\frac{3}{2}}}\|e^{-it\Delta}\psi^{j}\|_{L_{t}^{\frac{4}{1-b}}L_{x}^{6}}
\lesssim \|V\|_{L^{\frac{3}{2}}}\|\psi^{j}\|_{\dot{H}^{s_{c}}}.
\end{align}
and the condition $r_{n}\rightarrow 0$ or $\infty$. So it follows from \eqref{5.17} and \eqref{5.18} that
 ${\widetilde W}_{n}^{M}(x)$ in \eqref{5.15}
satisfies the property \eqref{5.10}.

To get \eqref{5.11}, it suffices to prove
\begin{align}\label{5.20}
\|{\widetilde W}_{n}^{M}\|_{L^{2}}^{2}=\|W_{n}^{M}\|_{L^{2}}^{2}+o_{n}(1).
\end{align}
It follows from the expression of ${\widetilde W}_{n}^{M}(x)$ \eqref{5.15} that
\begin{align}\label{5.21}
\begin{split}
&\|{\widetilde W}_{n}^{M}\|_{L^{2}}^{2}=\|W_{n}^{M}\|_{L^{2}}^{2}
+2\sum_{j=1}^{M}\langle W_{n}^{M}, e^{-it_{n}^{j}\Delta}\psi^{j}-e^{it_{n}^{j}H_{r_{n}}}\psi^{j}\rangle\\
&+2\sum_{k\neq j}\langle e^{-it_{n}^{k}\Delta}\psi^{j}-e^{it_{n}^{k}H_{r_{n}}}\psi^{j},
e^{-it_{n}^{j}\Delta}\psi^{j}-e^{it_{n}^{j}H_{r_{n}}}\psi^{j}\rangle\\
&+\sum_{j=1}^{M}\|e^{-it_{n}^{j}\Delta}\psi^{j}-e^{it_{n}^{j}H_{r_{n}}}\psi^{j}\|_{L^{2}}^{2},
\end{split}
\end{align}
from which, we only need to show that
\begin{align}\label{5.22}
\|e^{-it_{n}^{j}\Delta}\psi^{j}-e^{it_{n}^{j}H_{r_{n}}}\psi^{j}\|_{L^{2}}\rightarrow 0,
\end{align}
as $n\rightarrow \infty$.

In fact,
\begin{align}\label{5.23}
\begin{split}
\|e^{-it_{n}^{j}\Delta}\psi^{j}-e^{it_{n}^{j}H_{r_{n}}}\psi^{j}\|_{L^{2}}
&=\|\displaystyle\int_{-t_{n}^{j}}^{0}e^{-i(t_{n}^{j}+s)H_{r_{n}}}(V_{r_{n}}e^{is\Delta}\psi^{j})ds\|_{L^{2}}\\
&\lesssim \|V_{r_{n}}e^{it\Delta}\psi^{j}\|_{L_{t}^{2}L_{x}^{\frac{6}{5}}}\rightarrow 0,
\end{split}
\end{align}
as $n\rightarrow \infty$. where the last step follows from
\begin{align}\label{5.24}
\|V_{r_{n}}e^{it\Delta}\psi^{j}\|_{L_{t}^{2}L_{x}^{\frac{6}{5}}}
\lesssim \|V_{r_{n}}\|_{L^{\frac{3}{2}}}\|e^{it\Delta}\psi^{j}\|_{L_{t}^{2}L_{x}^{6}}
\lesssim \|V\|_{L^{\frac{3}{2}}}\|\psi^{j}\|_{L^{2}}.
\end{align}
and the condition $r_{n}\rightarrow 0$ or $\infty$. Thus, we complete the proof of \eqref{5.11}.

Now we turn to \eqref{5.12}. Since
\begin{align*}
\|H_{r_{n}}^{\frac{1}{2}}f_{n}\|_{L^{2}}^{2}=\|\nabla f_{n}\|_{L^{2}}^{2}+\langle V_{r_{n}}f_{n}, f_{n}\rangle
\end{align*}
and
$$
|\langle V_{r_{n}}f_{n}, f_{n}\rangle|\lesssim \|V_{r_{n}}\|_{L^{\frac{3}{2}}}\|f_{n}\|_{L^{6}}^{2}
\lesssim \|V\|_{L^{\frac{3}{2}}}\|\nabla f_{n}\|_{L^{2}}^{2},
$$
we have
\begin{align}\label{5.25}
\|H_{r_{n}}^{\frac{1}{2}}f_{n}\|_{L^{2}}^{2}=\|\nabla f_{n}\|_{L^{2}}^{2}+o_{n}(1),
\end{align}
provided that $\|\nabla f_{n}\|_{L^{2}}$ is uniformly bounded. Hence, applying \eqref{5.25} with $\phi_{n}$, $\phi^{j}$
and ${\widetilde W}_{n}^{M}$ and using the asymptotic Pythagorean expansion associated with the free linear propagator,
we find that \eqref{5.12} can be deduced from the following expression
\begin{align}\label{5.26}
\|\nabla {\widetilde W}_{n}^{M}\|_{L^{2}}^{2}=\|\nabla W_{n}^{M}\|_{L^{2}}^{2}+o_{n}(1).
\end{align}

As in the proof of \eqref{5.20}, it suffices to prove
\begin{align}\label{5.27}
\|\nabla (e^{-it_{n}^{j}\Delta}\psi^{j}-e^{it_{n}^{j}H_{r_{n}}}\psi^{j})\|_{L^{2}}\rightarrow 0,
\end{align}
as $n\rightarrow \infty$. Indeed, using \eqref{2.1} and \eqref{2.2}, we have
\begin{align}\label{5.28}
\begin{split}
\|\nabla (e^{-it_{n}^{j}\Delta}\psi^{j}-e^{it_{n}^{j}H_{r_{n}}}\psi^{j})\|_{L^{2}}
&\leq \|H^{\frac{1}{2}}\displaystyle\int_{-t_{n}^{j}}^{0}e^{-i(t_{n}^{j}+s)H_{r_{n}}}(V_{r_{n}}e^{isH_{0}}\psi^{j})ds\|_{L^{2}}\\
&\lesssim \|\nabla (V_{r_{n}}e^{isH_{0}}\psi^{j})\|_{L_{t}^{2}L_{x}^{\frac{6}{5}}}\rightarrow 0,
\end{split}
\end{align}
as $n\rightarrow \infty$, where the last step follows from
\begin{align}\label{5.29}
\begin{split}
\|\nabla (V_{r_{n}}e^{is\Delta}\psi^{j})\|_{L_{t}^{2}L_{x}^{\frac{6}{5}}}
&\lesssim \big\||x||\nabla V_{r_{n}}|\big\|_{L^{\frac{3}{2}}}\big\||x|^{-1}e^{is\Delta}\psi^{j}\big\|_{L_{t}^{2}L_{x}^{6}}
+\| V_{r_{n}}\|_{L^{\frac{3}{2}}}\|\nabla e^{is\Delta}\psi^{j}\|_{L_{t}^{2}L_{x}^{6}}\\
&\lesssim  \Big(\big\||x||\nabla V|\big\|_{L^{\frac{3}{2}}}+\| V\|_{L^{\frac{3}{2}}}\Big)
\|\Delta e^{is\Delta}\psi^{j}\|_{L_{t}^{2}L_{x}^{6}}\\
&\lesssim  \Big(\big\||x||\nabla V|\big\|_{L^{\frac{3}{2}}}+\| V\|_{L^{\frac{3}{2}}}\Big)\|\psi^j\|_{H^{1}}.
\end{split}
\end{align}

Now Let's consider the other case $r_{n}=1$. Using \eqref{5.13} again gives
\begin{align}\label{5.30}
\phi_{n}(x)=\sum_{j=1}^{M}e^{-it_{n}^{j}\Delta}\psi^{j}(x)+W_{n}^{M}(x).
\end{align}
If, on one hand, $t_{n}^{j}\rightarrow\infty$, by Lemma \ref{lem2.1}, there exists $\tilde{\psi}^{j}\in H^{1}({\bf R}^{3})$ such that
$$
\|e^{-it_{n}^{j}\Delta}\psi^{j}-e^{it_{n}^{j}H}\tilde{\psi}^{j}\|_{H^{1}}\rightarrow 0.
$$
If, on the other hand,
$t_{n}^{j}=0$, we define $\tilde{\psi}^{j}=\psi^{j}$. To sum up, in either case, we obtain a new profile $\tilde{\psi}^{j}$
for the given $\psi^{j}$ such that
\begin{align}\label{5.31}
\|e^{-it_{n}^{j}\Delta}\psi^{j}-e^{-it_{n}^{j}H}\tilde{\psi}^{j}\|_{H^{1}}\rightarrow 0,
\ {\rm as}\ \ n\rightarrow+\infty.
\end{align}
In order to get the form of \eqref{5.10}, we can rewrite \eqref{5.8} as
\begin{align}\label{5.32}
\phi_{n}(x)=\sum_{j=1}^{M}e^{it_{n}^{j}H}\tilde{\psi}^{j}(x)+{\widetilde W}_{n}^{M}(x),
\end{align}
where
\begin{align}\label{5.33}
{\widetilde W}_{n}^{M}(x)=W_{n}^{M}(x)+\sum_{j=1}^{M}\Big(e^{-it_{n}^{j}\Delta}\psi^{j}(x)-e^{it_{n}^{j}H}\tilde{\psi}^{j}(x)\Big)
\end{align}
Here we only give the proof of \eqref{5.10}, since all the proofs of \eqref{5.11}-\eqref{5.12} can be obtained by following the same argument
in the case $r_{n}\rightarrow 0$ or $\infty$ and using \eqref{5.31}.  Indeed, \eqref{5.17} with $r_{n}=1$ is still valid, which yields
\begin{align}\label{5.34}
\lim_{M\rightarrow+\infty}[\lim_{n\rightarrow+\infty}\|e^{itH}W_{n}^{M}\|_{S(\dot{H}^{s_{c}})}]=0.
\end{align}
And using the Strichartz estimate \eqref{2.1} and \eqref{5.31}, we have
\begin{align}\label{5.35}
\begin{split}
\|e^{-itH}(e^{-it_{n}^{j}\Delta}\psi^{j}(x)-e^{it_{n}^{j}H}\tilde{\psi}^{j}(x))\|_{S(\dot{H}^{s_{c}})}
&\lesssim \|e^{itH}(e^{-it_{n}^{j}\Delta}\psi^{j}(x)-e^{it_{n}^{j}H}\tilde{\psi}^{j}(x))\|_{\dot{H}^{s_{c}}}\\
&\lesssim \|e^{-it_{n}^{j}\Delta}\psi^{j}(x)-e^{it_{n}^{j}H}\tilde{\psi}^{j}(x)\|_{H^{1}}\rightarrow 0,
\end{split}
\end{align}
as $n\rightarrow\infty$. Putting \eqref{5.34} and \eqref{5.35} together gives \eqref{5.10}, that is,
\begin{align}\label{5.36}
\lim_{M\rightarrow+\infty}[\lim_{n\rightarrow+\infty}\|e^{-itH}{\widetilde W}_{n}^{M}\|_{S(\dot{H}^{s_{c}})}]=0.
\end{align}
\end{proof} \vskip0.5cm

\begin{remark}\label{rem5.3}
We claim that
\begin{align}\label{5.37}
\lim_{M, n\rightarrow\infty}\|W_{n}^{M}\|_{L_{x}^{p}}=0.
\end{align}
where $2<p<6$.

Indeed, when $r_{n}\rightarrow 0$ or $\infty$, it follows from Remark 6.2 in \cite{FG1} (that is, \eqref{5.37} holds when $V=0$)
and \eqref{5.15} that it suffices to
show that
\begin{align}\label{5.38}
\|e^{-it_{n}^{j}\Delta}\psi^{j}-e^{it_{n}^{j}H_{r_{n}}}\psi^{j}\|_{L^{p}}\rightarrow 0,
\end{align}
as $n\rightarrow \infty$,
which is implied by Sobolev embedding, \eqref{5.22} and \eqref{5.27}.

Similarly, when $r_{n}=1$, by Remark 6.2 in \cite{FG1} again and \eqref{5.33}, we only prove that
\begin{align}\label{5.39}
\|e^{-it_{n}^{j}\Delta}\psi^{j}(x)-e^{it_{n}^{j}H}\tilde{\psi}^{j}(x)\|_{L^{p}}\rightarrow 0,
\end{align}
as $n\rightarrow \infty$, which is implied by Sobolev embedding and \eqref{5.31}.

It follows from \eqref{5.37} and Lemma \ref{lemFG} that
\begin{align}\label{5.40}
\lim_{M, n\rightarrow\infty}\big\||x|^{-b}|W_{n}^{M}|^{4}\big\|_{L_{x}^{1}}=0.
\end{align}

\end{remark}
\vskip0.5cm

Next, we shall use Proposition \ref{pro5.2} and Remark \ref{rem5.3} to establish the energy pythagorean expansion.

\begin{lemma}\label{lem5.4} In the situation of Proposition \ref{pro5.2}, we have
\begin{align}\label{5.41}
E_{V_{r_{n}}}(\phi_{n})=\sum_{j=1}^{M}E_{V_{r_{n}}}(e^{it_{n}^{j}H_{r_{n}}}\psi^{j})+E_{V_{r_{n}}}(W_{n}^{M})+o_n(1).
\end{align}
\end{lemma}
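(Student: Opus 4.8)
The plan is to write the energy as $E_{V_{r_n}}(u)=\frac12\|H_{r_n}^{1/2}u\|_{L^2}^2-\frac14\||x|^{-b}|u|^4\|_{L^1}$ and decouple the kinetic and nonlinear parts separately. The kinetic part is immediate from Proposition \ref{pro5.2}: applying \eqref{5.12} to $\phi_n$ and using that $e^{it_n^jH_{r_n}}$ commutes with $H_{r_n}^{1/2}$ and is an $L^2$-isometry, one gets
\[
\|H_{r_n}^{1/2}\phi_n\|_{L^2}^2=\sum_{j=1}^M\|H_{r_n}^{1/2}(e^{it_n^jH_{r_n}}\psi^j)\|_{L^2}^2+\|H_{r_n}^{1/2}W_n^M\|_{L^2}^2+o_n(1).
\]
Thus \eqref{5.41} reduces to the nonlinear decoupling
\[
\||x|^{-b}|\phi_n|^4\|_{L^1}=\sum_{j=1}^M\||x|^{-b}|e^{it_n^jH_{r_n}}\psi^j|^4\|_{L^1}+\||x|^{-b}|W_n^M|^4\|_{L^1}+o_n(1)
\]
for each fixed $M$; adding the two displays and regrouping into $E_{V_{r_n}}(e^{it_n^jH_{r_n}}\psi^j)$ and $E_{V_{r_n}}(W_n^M)$ then concludes.

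To prove the nonlinear decoupling, set $v_n^M=\sum_{j=1}^Me^{it_n^jH_{r_n}}\psi^j$, so $\phi_n=v_n^M+W_n^M$, and expand $|a+b|^4=|a|^4+|b|^4+4|a|^2\operatorname{Re}(a\bar b)+4|b|^2\operatorname{Re}(a\bar b)+2|a|^2|b|^2+4(\operatorname{Re}(a\bar b))^2$; similarly the expansion of $|v_n^M|^4$ produces $\sum_j|e^{it_n^jH_{r_n}}\psi^j|^4$ plus monomials mixing distinct profiles. Two structural facts kill all mixed monomials. First, by \eqref{5.22}, \eqref{5.27} (respectively \eqref{5.31} when $r_n=1$) and interpolation, each profile $e^{it_n^jH_{r_n}}\psi^j$ differs from the free evolution $e^{-it_n^j\Delta}\psi^j$ by a quantity tending to $0$ in $L^p$ for $2\le p\le6$, so they may be interchanged. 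Second, after passing to a subsequence, \eqref{5.9} forces all but at most one index to satisfy $|t_n^j|\to\infty$, and for such $j$ the free dispersive decay gives $\|e^{-it_n^j\Delta}\psi^j\|_{L^p}\to0$ for $2<p\le6$ (first on $C_c^\infty$, then by density in $H^1$). Estimating each mixed monomial by sums of products of $L^4$- and $L^r$-norms via Lemma \ref{lemFG}(i) and its $|f|^2|g|^2$-analogue (same splitting of $|x|^{-b}$ on $\{|x|\le1\}$, $\{|x|>1\}$ together with the Stein--Weiss inequality of Lemma \ref{lem2.4}), every monomial carrying at least one dispersing factor, and also $\||x|^{-b}|e^{it_n^jH_{r_n}}\psi^j|^4\|_{L^1}$ for those $j$, tends to $0$; here one uses that $W_n^M$ is bounded in $H^1$ uniformly in $n$ for fixed $M$, which follows from \eqref{5.11}, \eqref{5.12} and the uniform norm equivalence $\|H_{r_n}^{1/2}\cdot\|_{L^2}\sim\|\nabla\cdot\|_{L^2}$ (uniform since $\|V_{r_n}\|_{L^{3/2}}=\|V\|_{L^{3/2}}$).

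The remaining case is the single exceptional index, say $j=1$, with $\{t_n^1\}$ bounded; after a time shift $\psi_n^1=e^{it_n^1H_{r_n}}\psi^1$ is (close in $H^1$ to) the fixed function $\psi^1\in H^1(\mathbb R^3)$. The mixed monomials to handle are then $\||x|^{-b}|\psi^1|^2|W_n^M|^2\|_{L^1}$, $\||x|^{-b}|\psi^1|\,|W_n^M|^3\|_{L^1}$ and the term linear in $W_n^M$. By the free profile decomposition \eqref{5.13}, $W_n^M\rightharpoonup0$ weakly in $H^1$ along the subsequence, hence in $L^6$; since a H\"older computation gives $|x|^{-b}|\psi^1|^2\psi^1\in L^{6/5}(\mathbb R^3)$ for $0<b<1$, the signed contribution $\int|x|^{-b}|\psi^1|^2\psi^1\overline{W_n^M}\,dx$ tends to $0$. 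For the quadratic and cubic monomials, a H\"older computation gives $|x|^{-b}|\psi^1|^2\in L^{3/2}(\mathbb R^3)$ and $|x|^{-b}|\psi^1|\in L^2(\mathbb R^3)$ for $0<b<1$, and the embeddings $H^1(\mathbb R^3)\hookrightarrow L^2(|x|^{-b}|\psi^1|^2\,dx)$ and $H^1(\mathbb R^3)\hookrightarrow L^3(|x|^{-b}|\psi^1|\,dx)$ are compact (split the weight into an $L^\infty$ compactly supported part and one of small $L^{3/2}$- resp.\ $L^2$-norm, and apply Rellich on the first piece); since $W_n^M\rightharpoonup0$ in $H^1$, this forces $\int|x|^{-b}|\psi^1|^2|W_n^M|^2\,dx\to0$ and $\int|x|^{-b}|\psi^1|\,|W_n^M|^3\,dx\to0$. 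Hence all mixed monomials vanish, the diagonal terms collapse, and the nonlinear decoupling, hence \eqref{5.41}, follows.

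I expect the main obstacle to be exactly this last step: controlling the cross terms between the single non-dispersing profile and the remainder $W_n^M$ \emph{for fixed $M$}, since $W_n^M$ is only quantitatively small in the joint limit $M,n\to\infty$. The device is to trade quantitative smallness of $W_n^M$ for its weak-$H^1$ nullity, used against the fixed, spatially decaying, $|x|^{-b}$-weighted profile $\psi^1$ — the monomial linear in $W_n^M$ vanishing by weak $L^6$-convergence, and those quadratic and cubic in $W_n^M$ by compactness of the induced weighted Sobolev embeddings. A persistent but routine nuisance, present throughout, is keeping the inhomogeneity $|x|^{-b}$ under control in every H\"older splitting; this is precisely what Lemma \ref{lemFG} and the Stein--Weiss inequality of Lemma \ref{lem2.4} are there to absorb.
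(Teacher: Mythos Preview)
Your reduction to the nonlinear decoupling via \eqref{5.11}--\eqref{5.12} is exactly what the paper does, and your treatment of the cross terms among distinct profiles (dispersive decay after passing to the free evolution via \eqref{5.22}, \eqref{5.27}, \eqref{5.31}) matches the paper's argument for \eqref{5.43}.

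Where you diverge is in the handling of the profile--remainder cross terms for fixed $M$. The paper does not touch these directly at all: instead it invokes \eqref{5.40} (which follows from Remark \ref{rem5.3}) to choose $M_1\gg M$ with $\||x|^{-b}|W_n^{M_1}|^4\|_{L^1}\le\varepsilon$, applies the profile-only decoupling \eqref{5.43} at level $M_1$, and then uses H\"older with respect to the measure $|x|^{-b}\,dx$ (so that $\||x|^{-b}|a|^3|b|\|_{L^1}\le\||x|^{-b}|a|^4\|_{L^1}^{3/4}\||x|^{-b}|b|^4\|_{L^1}^{1/4}$) to absorb every cross term involving $W_n^{M_1}$ into $O(\varepsilon^{1/4})$. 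Regrouping the profiles $M+1,\dots,M_1$ back into $W_n^M$ via a second application of \eqref{5.43} then yields \eqref{5.42}. This sidesteps entirely the question of whether $W_n^M$ is weakly null or whether any weighted embedding is compact.

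Your route---trading quantitative smallness of $W_n^M$ for its weak-$H^1$ nullity against the fixed non-dispersing profile, and exploiting compactness of $H^1\hookrightarrow L^p(|x|^{-b}|\psi^1|^k\,dx)$---is correct and gives a proof valid for each fixed $M$ without the double-limit device of \eqref{5.40}. It is more hands-on: you need the weak convergence $W_n^M\rightharpoonup0$ (standard in the free decomposition \eqref{5.13} but not stated in Proposition~\ref{pro5.2}, so worth a line), and you need the integrability checks $|x|^{-b}|\psi^1|^2\in L^{3/2}$, $|x|^{-b}|\psi^1|\in L^2$, $|x|^{-b}|\psi^1|^2\psi^1\in L^{6/5}$, all of which do hold precisely on the range $0<b<1$. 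The paper's approach is shorter because it leans on \eqref{5.40} and pure H\"older; yours is more self-contained and does not need Remark~\ref{rem5.3}.
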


\begin{proof}
 According to \eqref{5.11} and \eqref{5.12}, it is sufficient to establish for all $M\geq1$,
\begin{align}\label{5.42}
\big\||x|^{-b}|\phi_{n}|^{4}\big\|_{L_{x}^{1}}=\sum_{j=1}^{M}\big\||x|^{-b}|e^{it_{n}^{j}H_{r_{n}}}\psi^{j}|^{4}\big\|_{L_{x}^{1}}
+\big\||x|^{-b}|W_{n}^{M}|^{4}\big\|_{L_{x}^{1}}+o_n(1).
\end{align}
And, by density. one may set $\psi^{j}\in C_{c}^{\infty}({\bf R}^{3})$.

We Claim that
\begin{align}\label{5.43}
\big\||x|^{-b}|\sum_{j=1}^{M}e^{it_{n}^{j}H_{r_{n}}}\psi^{j}|^{4}\big\|_{L_{x}^{1}}=\sum_{j=1}^{M}\big\||x|^{-b}|e^{it_{n}^{j}H_{r_{n}}}\psi^{j}|^{4}\big\|_{L_{x}^{1}}+o_n(1).
\end{align}
In fact, left hand side is the linear combination of the terms in the form
\begin{align}\label{5.44}
\displaystyle\int_{{\bf R}^{3}}|x|^{-b}e^{it_{n}^{j_{1}}H_{r_{n}}}\psi^{j_{1}}\overline{e^{it_{n}^{j_{2}}H_{r_{n}}}\psi^{j_{2}}}e^{it_{n}^{j_{3}}H_{r_{n}}}\psi^{j_{3}}
\overline{e^{it_{n}^{j_{4}}H_{r_{n}}}\psi^{j_{4}}}dx.
\end{align}
By \eqref{5.9}, there is at least one $j_{k}$ satisfying $t_{n}^{j_{k}}\rightarrow\infty$. Without loss of generality, we assume that $t_{n}^{j_{1}}\rightarrow\infty$.
Using Lemma \ref{lemFG}, the dispersive estimate (see Lemma 2.1 in \cite{H}), Sobolev embedding and the norm equivalence, we have that \eqref{5.44} is bounded by
\begin{align}\label{5.45}
\begin{split}
\Big(&\|e^{it_{n}^{j_{1}}H_{r_{n}}}\psi^{j_{1}}\|_{L_{x}^{4}}+\|e^{it_{n}^{j_{1}}H_{r_{n}}}\psi^{j_{1}}\|_{L_{x}^{r}}\Big)\prod_{k=2,3,4}\|e^{it_{n}^{j_{k}}H_{r_{n}}}\psi^{j_{k}}\|_{H_{x}^{1}}\\
&\lesssim (|t_{n}^{j_{1}}|^{-\frac{3}{4}}\|\psi^{j_{1}}\|_{L_{x}^{\frac{4}{3}}}+|t_{n}^{j_{1}}|^{-\frac{3}{2}(1-\frac{2}{r})}\|\psi^{j_{1}}\|_{L_{x}^{r'}})\|\psi^{j_{2}}\|_{H_{x}^{1}}
\|\psi^{j_{3}}\|_{H_{x}^{1}}\|\psi^{j_{4}}\|_{H_{x}^{1}}\rightarrow 0,
\end{split}
\end{align}
where $\frac{12}{3-b}<r<12$. Hence, \eqref{5.44} tends to zero as $n\rightarrow +\infty$. It follows from \eqref{5.40} that, for any $\varepsilon>0$, there exists $M_{1}\gg 1$ such that
$\big\||x|^{-b}|W_{n}^{M_{1}}|^{4}\big\|_{L_{x}^{1}}\leq\varepsilon$ for all sufficiently large $n$. Hence, we obtain
\begin{equation}\label{5.46}
\begin{split}
\big\||x|^{-b}|\phi_{n}|^{4}\big\|_{L_{x}^{1}}
&=\sum_{j=1}^{M_{1}}\big\||x|^{-b}|e^{it_{n}^{j}H_{r_{n}}}\psi^{j}|^{4}\big\|_{L_{x}^{1}}+O(\varepsilon)+o_{n}(1)\\
&=\sum_{j=1}^{M}\big\||x|^{-b}|e^{it_{n}^{j}H_{r_{n}}}\psi^{j}|^{4}\big\|_{L_{x}^{1}}+\big\||x|^{-b}|W_{n}^{M_{1}}-W_{n}^{M}|^{4}\big\|_{L_{x}^{1}}+O(\varepsilon)+o_n(1)\\
&=\sum_{j=1}^{M}\big\||x|^{-b}|e^{it_{n}^{j}H_{r_{n}}}\psi^{j}|^{4}\big\|_{L_{x}^{1}}+\big\||x|^{-b}|W_{n}^{M}|^{4}\big\|_{L_{x}^{1}}+O(\varepsilon)+o_n(1).
\end{split}
\end{equation}
\end{proof} \vskip0.5cm

\begin{proposition}\label{pro5.5}
If $V$  is radial and satisfies \eqref{1.2} and \eqref{1.3}, $V\geq 0$ or $V\leq 0$, and $0<b<1$,
 there exists a radial $u_{c,0}$ in $H^1({\bf R}^{3})$ with
\begin{align}\label{5.47}
E(u_{c,0})^{s_{c}}M(u_{c,0})^{1-s_c}=\mathcal{E}_c<\mathcal{E},
\end{align}
\begin{align}\label{5.48}
\|u_{c,0}\|^{2(1-s_c)}_{L^{2}}\|H^{\frac{1}{2}}
u_{c,0}\|_{L^{2}}^{2s_{c}}<\mathcal{K},
\end{align}
such that if $u_c$ is the corresponding solution of \eqref{1.1} with
the initial data $u_{c,0}$, then $u_c$ is global and $\|u_c\|_{S(\dot{H}^{s_{c}})}=\infty$.
\end{proposition}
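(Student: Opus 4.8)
The plan is to extract a minimizing sequence for the scattering threshold and run the profile decomposition of Proposition \ref{pro5.2} to concentrate all the obstruction into a single profile. Concretely, since $\mathcal{E}_c < \mathcal{E}$, by the definition \eqref{5.4} there is a sequence of radial initial data $u_{n,0}\in H^1$ with $\|u_{n,0}\|_{L^2}^{2(1-s_c)}\|H^{1/2}u_{n,0}\|_{L^2}^{2s_c}<\mathcal{K}$ and $E(u_{n,0})^{s_c}M(u_{n,0})^{1-s_c}\to \mathcal{E}_c$ from above, such that the corresponding solutions $u_n$ are global (by Theorem \ref{th4.1}) but $\|u_n\|_{S(\dot H^{s_c})}=\infty$. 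After rescaling (using the scaling symmetry together with the associated rescaling $V\mapsto V_{r_n}$ of the potential) one may normalize, say, $M(u_{n,0})=1$, so that $\|H_{r_n}^{1/2}u_{n,0}\|_{L^2}^{2s_c}\to \mathcal{E}_c^{s_c}\cdot(\text{const})$ and the kinetic bound \eqref{5.48} persists; this is exactly the setup in which Proposition \ref{pro5.2} applies to $\phi_n=u_{n,0}$, with $r_n=1$, $r_n\to0$ or $r_n\to\infty$ along a subsequence.

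Next I would apply the profile decomposition \eqref{5.8} to $\phi_n=u_{n,0}$, giving profiles $\psi^j$, time shifts $t_n^j$, and remainders $W_n^M$. Using the mass and kinetic Pythagorean expansions \eqref{5.11}, \eqref{5.12} and the energy Pythagorean expansion of Lemma \ref{lem5.4}, together with the fact that $J_V$-type sharp Gagliardo--Nirenberg inequality (Remark \ref{rem3.2}, Lemma \ref{lem3.5}) forces each nonzero profile to satisfy a sub-threshold relation $E_{V_{r_n}}(\text{profile})^{s_c}M(\text{profile})^{1-s_c}\lesssim \mathcal{E}_c + o_n(1)$ with the kinetic constraint inherited, one argues that \emph{at most one} profile $\psi^1$ is nonzero. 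Indeed, if two or more profiles carried positive mass-energy, each would be a strictly sub-$\mathcal{E}_c$ datum, so each associated nonlinear profile would scatter (by minimality of $\mathcal{E}_c$ and the definition of $SC$), and then the long-time perturbation Lemma \ref{lem2.9}, applied to the approximate solution $\sum_j (\text{nonlinear profile})_j + e^{-itH_{r_n}}W_n^M$ with the smallness \eqref{5.10}, would yield $\|u_n\|_{S(\dot H^{s_c})}<\infty$ for large $n$, a contradiction. One also rules out $W_n^M$ carrying any mass-energy in the limit by the same scattering-plus-perturbation argument when the profile part is trivial, using \eqref{5.10} and \eqref{5.40}.

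Thus $u_{n,0}=e^{it_n^1 H_{r_n}}\psi^1 + W_n^M$ with $\|W_n^M\|_{H^1}\to0$ (after sending $M\to\infty$) and $E_{V_{r_n}}(e^{it_n^1 H_{r_n}}\psi^1)^{s_c}M(\psi^1)^{1-s_c}=\mathcal{E}_c$ in the limit, with the kinetic constraint $\le\mathcal{K}$ preserved. Unwinding the rescaling, one of the three scenarios $r_n\to0$, $r_n\to\infty$, $r_n=1$ must survive; in the first two the potential effectively disappears in the limit and one reduces to a profile for the free (or $V$-frozen) equation, which one handles as in \cite{H} using Lemma \ref{lem2.1} and the wave-operator construction Proposition \ref{pro4.3} to produce a genuine $H^1$ datum $u_{c,0}$; in the case $r_n=1$ with $t_n^1\to\pm\infty$ one again invokes Proposition \ref{pro4.3} to replace $\psi^1$ by a datum at time $0$, and if $t_n^1\equiv0$ one simply takes $u_{c,0}=\psi^1$. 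In every case the resulting $u_{c,0}$ satisfies \eqref{5.47} and \eqref{5.48} (the latter strict by Theorem \ref{th4.1}(i) and continuity, since $\mathcal{E}_c<\mathcal{E}$), and by construction its solution $u_c$ is global with $\|u_c\|_{S(\dot H^{s_c})}=\infty$ — otherwise $u_n$ would scatter by Lemma \ref{lem2.9}.

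The main obstacle I expect is the bookkeeping in the one-profile reduction: one must carefully match the \emph{mass-energy with kinetic constraint} across \eqref{5.11}, \eqref{5.12} and Lemma \ref{lem5.4}, checking that the function $f(x)=\tfrac12 x^2 - x^{2(1+s_c)}/((3+b)\mathcal{K})$ from \eqref{4.11} is superadditive in the relevant range so that splitting the mass-energy among several profiles strictly decreases the total, and simultaneously that the rescaled potentials $V_{r_n}$ do not destroy the sharp Gagliardo--Nirenberg constant in the limit (this is where $C_{GN}=C_{GN}^{rad}=C_{GN}^0$ from Remark \ref{rem3.2} and Lemma \ref{lem3.5} is essential). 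A secondary technical point is justifying the application of the long-time perturbation Lemma \ref{lem2.9} uniformly in $n$ with the error term built from the remainder and the profile interactions, which requires the orthogonality \eqref{5.9} to kill cross terms exactly as in \eqref{5.45}.
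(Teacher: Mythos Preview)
Your overall strategy---minimizing sequence, rescaling, profile decomposition, one-profile reduction via long-time perturbation---matches the paper's, but there is a genuine gap in how you handle the cases $r_n\to 0$ and $r_n\to\infty$.

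The point is that after rescaling to normalize mass, the sequence $\tilde u_{n,0}$ solves ${\rm INLS}_{V_{r_n}}$, not ${\rm INLS}_V$. When $r_n\to0$ or $r_n\to\infty$ the potential $V_{r_n}$ disappears in the limit, so the nonlinear profiles are solutions of the \emph{free} inhomogeneous NLS (${\rm INLS}_0$), and the minimality of $\mathcal{E}_c$---which is defined for the equation with potential $V$---says nothing about whether sub-$\mathcal{E}_c$ data scatter for ${\rm INLS}_0$. Consequently your ``at most one profile'' argument, as written, is circular in these regimes. Worse, even if a single nonscattering profile survived, it would be a critical element for ${\rm INLS}_0$, not for ${\rm INLS}_V$, and there is no way to ``unwind the rescaling'' to a fixed $H^1$ datum $u_{c,0}$ for the $V$-equation when $r_n$ does not converge to a finite positive number. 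So the sentence ``in the first two the potential effectively disappears \dots which one handles \dots to produce a genuine $H^1$ datum $u_{c,0}$'' does not work.

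The paper separates the two issues. It first proves $r_n\sim 1$ (equivalently, $\|u_{n,0}\|_{H^1}$ is bounded) by contradiction: assuming $r_n\to0$ or $\infty$, it builds an approximate solution from ${\rm INLS}_0$-nonlinear profiles and invokes the \emph{free} scattering result (Remark~\ref{rem5.6}, which is the extension of \cite{FG1} to $0<b<1$ via the new estimates \eqref{2.5}--\eqref{2.7}) together with \eqref{5.73} to conclude $\|\tilde u_n\|_{S(\dot H^{s_c})}<\infty$, contradicting \eqref{5.51}. Only once $r_n\sim1$ is established does one run the genuine one-profile reduction with $r_n=1$ in Proposition~\ref{pro5.2}, and here the minimality of $\mathcal{E}_c$ is legitimately available because all nonlinear profiles solve ${\rm INLS}_V$. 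You should restructure accordingly: use Remark~\ref{rem5.6} (not minimality of $\mathcal{E}_c$) to exclude $r_n\to0,\infty$, and then carry out the extraction only in the bounded case.
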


\begin{remark}\label{rem5.6}
When $V=0$, using the same argument as that of Proposition 6.4 in \cite{FG1}, combined with our new estimates \eqref{2.5}-\eqref{2.7} established in the present paper,
we actually extend the result obtained in \cite{FG1} to the more general case $0<b<1$, to get the following statement:
Let $V=0$ and $u_{0}\in H^{1}$ be radial and $0<b<1$. Suppose that \eqref{1.5} and \eqref{1.6} are satisfied, then the solution
$u$ of \eqref{1.1} is global in $H^{1}({\bf R}^{3})$ and scattering both forward and backward in time.

\end{remark} \vskip0.5cm

\begin{proof}
 By the assumption \eqref{5.5} and the definition of $\mathcal{E}_c$, we can
find a sequence of solutions $u_{n}(t)={\rm INLS}_{V}u_{n, 0}$ of \eqref{1.1} with initial data $u_{n,0}$ such that
\begin{align}\label{5.49}
M(u_{n,0})^{1-s_{c}}E(u_{n,0})^{s_{c}}\downarrow \mathcal{E}_c,
\end{align}
\begin{align}\label{5.50}
\| u_{n,0}\|^{2(1-s_{c})}_{L^{2}}\|H^{\frac{1}{2}}u_{n,0}\|^{2s_{c}}_{L^{2}}
<\mathcal{K}
\end{align}
and
\begin{align}\label{5.51}
\|u_n\|_{S(\dot{H}^{s_{c}})}=\infty.
\end{align}
Note that it's not obvious for uniform boundedness of $\|u_{n, 0}\|_{H^{1}}$ because of the
shortness of scaling invariance for the equation \eqref{1.1}. Hence, the first step is to show
that $\|u_{n, 0}\|_{H^{1}}$ is uniformly bounded, which can be obtained from the fact that
passing to a subsequence,
\begin{align}\label{5.52}
r_{n}=\|u_{n,0}\|_{L^{2}}^{-\frac{1}{s_{c}}}\sim 1.
\end{align}
Indeed, by the norm equivalence, we have
\begin{equation}\label{5.53}
\begin{split}
\|u_{n, 0}\|_{H^{1}}^{2}&=\|u_{n, 0}\|_{L^{2}}^{2}+\|\nabla u_{n, 0}\|_{L^{2}}^{2}\\
&\sim  \|u_{n, 0}\|_{L^{2}}^{2}+\|H^{\frac{1}{2}} u_{n, 0}\|_{L^{2}}^{2}\\
&<r_{n}^{-2s_{c}}+\mathcal{K}^{\frac{1}{s_{c}}}r_{n}^{2(1-s_{c})}.
\end{split}
\end{equation}
Now we prove \eqref{5.52} by contradiction. 
Let \eqref{5.52} be false, then we may assume that $r_{n}\rightarrow 0$ or $+\infty$. Next, we shall apply
the linear profile decomposition and the perturbation lemma to get a contradiction. To this end,
we define
$$
\tilde{u}_{n}(x, t)=\frac{1}{r_{n}^{\frac{2-b}{2}}}u_{n}(\frac{x}{r_{n}}, \frac{t}{r_{n}^{2}}),
$$
and
$$
\tilde{u}_{n, 0}(x)=\frac{1}{r_{n}^{\frac{2-b}{2}}}u_{n, 0}(\frac{x}{r_{n}}).
$$
Hence, $\tilde{u}_{n}={\rm INLS}_{V_{r_{n}}}\tilde{u}_{n, 0}$, that is, $\tilde{u}_{n}$ is the
solution to the initial value problem
\begin{equation}\label{5.54}
\left\{ \begin{aligned}
  i&\partial_{t}\tilde{u}_{n}+H_{r_{n}}\tilde{u}_{n}-|x|^{-b}|\tilde{u}_{n}|^{2}\tilde{u}_{n}=0,\\
  \;&\tilde{u}_{n}(0)=\tilde{u}_{n, 0},
                          \end{aligned}\right.
                          \end{equation}
and $\|\tilde{u}_{n, 0}\|_{H^{1}}$ is uniformly bounded, which follows from
\begin{align}\label{5.55}
\|\tilde{u}_{n, 0}\|_{L^{2}}^{2}=r_{n}^{2s_{c}}\|u_{n, 0}\|_{L^{2}}^{2}=1
\end{align}
and
\begin{equation}\label{5.56}
\begin{split}
\|\nabla\tilde{u}_{n, 0}\|_{L^{2}}^{2}
&\sim\|H_{r_{n}}^{\frac{1}{2}}\tilde{u}_{n, 0}\|_{L^{2}}^{2}
=r_{n}^{b-1}\|H^{\frac{1}{2}}u_{n, 0}\|_{L^{2}}^{2}\\
&= \|u_{n,0}\|_{L^{2}}^{\frac{2(1-s_{c})}{s_{c}}}\|H^{\frac{1}{2}}u_{n, 0}\|_{L^{2}}^{2}
<\mathcal{K}^{\frac{1}{s_{c}}}.
\end{split}
\end{equation}
Therefore, we apply Proposition \ref{pro5.2} to $\tilde{u}_{n, 0}$ to get
\begin{align}\label{5.57}
\tilde{u}_{n, 0}(x)=\sum_{j=1}^{M}e^{it_{n}^{j}H_{r_{n}}}\psi^{j}(x)+W_{n}^{M}(x).
\end{align}
Then by \eqref{5.41}, we have further that
\begin{align}\label{5.58}
\sum_{j=1}^{M}\lim_{n\rightarrow\infty}E_{V_{r_{n}}}(e^{it_{n}^{j}H_{r_{n}}}\psi^{j})
+\lim_{n\rightarrow\infty}E_{V_{r_{n}}}(W_{n}^{M})
=\lim_{n\rightarrow\infty}E_{V_{r_{n}}}(\tilde{u}_{n, 0}).
\end{align}
Since also by the profile expansion, we have
\begin{align}\label{5.59}
1=\|\tilde{u}_{n, 0}\|_{L^{2}}^{2}
=\sum_{j=1}^{M}\|\psi^{j}\|_{L^{2}}^{2}+\|W_{n}^{M}\|_{L^{2}}^{2}+o_{n}(1),
\end{align}
\begin{align}\label{5.60}
\|H_{r_{n}}^{\frac{1}{2}}\tilde{u}_{n, 0}\|_{L^{2}}^{2}
=\sum_{j=1}^{M}\|H_{r_{n}}^{\frac{1}{2}}e^{it_{n}^{j}H_{r_{n}}}\psi^{j}\|_{L^{2}}^{2}
+\|H_{r_{n}}^{\frac{1}{2}}e^{it_{n}^{j}H_{r_{n}}}W_{n}^{M}\|_{L^{2}}^{2}+o_{n}(1),
\end{align}
Since from \eqref{4.12}, each energy is nonnegative, and then
\begin{equation}\label{5.61}
\begin{split}
\lim_{n\rightarrow\infty}E_{V_{r_{n}}}(e^{it_{n}^{j}H_{r_{n}}}\psi^{j})
&\leq
\lim_{n\rightarrow\infty}E_{V_{r_{n}}}(\tilde{u}_{n, 0})
=\lim_{n\rightarrow\infty}M(u_{n,0})^{\frac{1-s_c}{s_c}}E(u_{n,0})\\
&=\mathcal{E}_c^{\frac{1}{s_{c}}}<\mathcal{E}^{\frac{1}{s_{c}}}.
\end{split}
\end{equation}
For a given $j$, if, on one hand, $|t_n^j|\rightarrow+\infty$, we may assume $t_n^j\rightarrow+\infty$
or $t_n^j\rightarrow-\infty$  up to a subsequence. In this case, by \eqref{5.59} and \eqref{5.61}
with $V=0$ and using Lemma \ref{lemFG} (iii), we have
\begin{align}\label{5.62}
\frac{1}{2}\|\nabla \psi^{j}\|_{L^{2}}\| \psi^{j}\|_{L^{2}}^{\frac{1-s_c}{s_c}}
<\mathcal{E}^{\frac{1}{s_{c}}}.
\end{align}
If we denote by ${\rm INLS}_{0}(t)\phi$   a solution of \eqref{1.1}  with $V=0$ and initial data $\phi$,
then we get  from the existence of wave operators ( Proposition \ref{pro4.3} with $V=0$ )that
there exists $\tilde{\psi}^{j}$ such that
\begin{align}\label{5.63}
\|{\rm INLS}_{0}(-t_{n}^{j})\tilde{\psi}^{j}-e^{-it_{n}^{j}\Delta}\psi^{j}\|_{H^2}
\rightarrow0,\ \ as\ \ n\rightarrow+\infty
\end{align}
and
\begin{align}\label{5.64}
\|{\rm INLS}_{0}(t)\tilde{\psi}^{j}\|_{S(\dot{H}^{s_{c}})}<\infty\;\text{and}\; \|\langle\nabla\rangle {\rm INLS}_{0}(t)\tilde{\psi}^{j}\|_{S(L^{2})}<\infty.
\end{align}
If, on the other hand, $t_{n}^{j}=0$, we set $\tilde{\psi}^{j}=\psi^{j}$.
To sum up, in either case, we obtain a $\tilde{\psi}^{j}$ for the given $\psi^{j}$ such that \eqref{5.63} and \eqref{5.64} are true.

In order to use the perturbation theory to get a contradiction, we set
$$v^{j}(t)={\rm INLS}_{0}(t)\tilde{\psi}^{j},\ \
v_{n}(t)=\sum_{j=1}^{M}v^j(t-t_{n}^{j}),\ \
\tilde{v}_{n}(t)={\rm INLS}_{0}v_{n}(0).$$ We will prove successively the following three claims
to get a contradiction.

~Claim 1. There exists a large constant $A_{0}$ and $M_{0}$ independent of $M$ such
that there exists $n_0=n_0(M)$ such that for $n\geq n_0$,
\begin{align}\label{5.65}
\|\tilde{v}_n\|_{S(\dot{H}^{s_{c}})}\leq A_{0},\quad \|\tilde{v}_n\|_{L_{t}^{\infty}H_{x}^{1}}\leq M_{0}
\end{align}
Indeed, using \eqref{5.9}, \eqref{5.63} and Lemma \ref{lemFG} (ii), we have that
\begin{align}\label{5.66}
E_{0}(v_{n}(0))=\sum_{j=1}^{M}E_{0}(v^{j}(-t^{j}))+o_{n}(1)
=\sum_{j=1}^{M}E_{0}(e^{-it_{n}^{j}\Delta}\psi^{j})+o_{n}(1)
\end{align}
By \eqref{5.22}, \eqref{5.28}, Lemma \ref{lemFG} (ii),
the assumption $r_{n}\rightarrow 0$ or $\infty$ and Lemma \ref{lem5.4} , we have
\begin{align}\label{5.67}
\begin{split}
&\quad \sum_{j=1}^{M}E_{0}(e^{-it_{n}^{j}\Delta}\psi^{j})
=\sum_{j=1}^{M}E_{V_{r_{n}}}(e^{it_{n}^{j}H_{r_{n}}}\psi^{j})+o_{n}(1)\\
&\leq E_{V_{r_{n}}}(\tilde{u}_{n, 0})+o_{n}(1)=r_{n}^{2(s_{c}-1)}E(u_{n, 0})+o_{n}(1).
\end{split}
\end{align}
Collecting \eqref{5.66} and \eqref{5.67} gives
\begin{align}\label{5.68}
E_{0}(v_{n}(0))\leq r_{n}^{2(s_{c}-1)}E(u_{n, 0})+o_{n}(1).
\end{align}
Similarly, we have
\begin{align}\label{5.69}
M(v_{n}(0))\leq M(\tilde{u}_{n, 0})+o_{n}(1)=r_{n}^{2s_{c}}M(u_{n, 0})+o_{n}(1)
\end{align}
and
\begin{align}\label{5.70}
\|\nabla v_{n}(0)\|_{L^{2}}\leq \|H_{r_{n}}^{\frac{1}{2}}\tilde{u}_{n, 0}\|_{L^{2}}+o_{n}(1)
=r_{n}^{s_{c}-1}\|H^{\frac{1}{2}}u_{n,0}\|_{L^{2}}+o_{n}(1).
\end{align}
Hence, \eqref{5.68}-\eqref{5.70} imply for large $n$,
\begin{align}\label{5.71}
M(v_{n}(0))^{1-s_{c}}E_{0}(v_{n}(0))^{s_{c}}
\leq
M(u_{n, 0})^{1-s_{c}}E(u_{n, 0})^{s_{c}}+o_{n}(1)
=\mathcal{E}_c+o_{n}(1)
<\mathcal{E}
\end{align}
and
\begin{align}\label{5.72}
\|v_{n}(0)\|_{L^{2}}^{2(1-s_{c})}\|\nabla v_{n}(0)\|_{L^{2}}^{2s_{c}}
\leq \|u_{n, 0}\|_{L^{2}}^{2(1-s_{c})}\|H^{\frac{1}{2}}u_{n,0}\|_{L^{2}}^{2s_{c}}
+o_{n}(1)<\mathcal{K}.
\end{align}
Furthermore, we claim that
\begin{align}\label{5.73}
\mathcal{E}\leq M(Q)^{1-s_{c}}E_{0}(Q)^{s_{c}}\;\text{and}\; \mathcal{K}\leq \|Q\|_{L^{2}}^{2(1-s_{c})}\|\nabla Q\|_{L^{2}}^{2s_{c}}.
\end{align}
In fact, if $V\geq 0$, \eqref{5.73} is obvious. If $V\leq 0$, it follows from the Gagliardo-Nirenberg inequality and the Pohozaev identities that
\begin{equation}\label{5.74}
\begin{split}
\frac4{(3+b)\|\mathcal{Q}\|_{L^2}^{2(1-s_{c})}\|H^{\frac{1}{2}} \mathcal{Q}\|_{L^2}^{2s_{c}} }
&=\frac{\big\||x|^{-b} |\mathcal{Q}|^{4}\big\|_{L^{1}}}{\|\mathcal{Q}\|_{L^2}^{1-b}\|H^{\frac{1}{2}} \mathcal{Q}\|_{L^2}^{3+b}}
\geq \frac{\big\||x|^{-b} |Q|^{4}\big\|_{L^{1}}}{\|Q\|_{L^2}^{1-b}\|H^{\frac{1}{2}} Q\|_{L^2}^{3+b}}\\
&\geq \frac{\big\||x|^{-b} |Q|^{4}\big\|_{L^{1}}}{\|Q\|_{L^2}^{1-b}\|\nabla Q\|_{L^2}^{3+b}}
=\frac4{(3+b)\|Q\|_{L^2}^{2(1-s_{c})}\|\nabla Q\|_{L^2}^{2s_{c}} }.
\end{split}
\end{equation}
Thus, we obtain
\begin{align}\label{5.75}
\|\mathcal{Q}\|_{L^2}^{2(1-s_{c})}\|H^{\frac{1}{2}} \mathcal{Q}\|_{L^2}^{2s_{c}}
\leq \|Q\|_{L^2}^{2(1-s_{c})}\|\nabla Q\|_{L^2}^{2s_{c}},
\end{align}
which, by \eqref{3.3} and \eqref{4.8}, implies that
\begin{equation}\label{5.76}
\begin{split}
\mathcal{E}
&=E(\mathcal{Q})^{s_{c}}M(\mathcal{Q})^{1-s_{c}}
=\Big(\frac{s_{c}}{3+b}\Big)^{s_{c}}\|\mathcal{Q}\|_{L^2}^{2(1-s_{c})}\|H^{\frac{1}{2}} \mathcal{Q}\|_{L^2}^{2s_{c}}\\
&\leq \Big(\frac{s_{c}}{3+b}\Big)^{s_{c}}\|Q\|_{L^2}^{2(1-s_{c})}\|\nabla Q\|_{L^2}^{2s_{c}}
= M(Q)^{1-s_{c}}E_{0}(Q)^{s_{c}}.
\end{split}
\end{equation}
Thus, we obtain \eqref{5.73}.

Putting together \eqref{5.71}-\eqref{5.73}, we deduce that
$$
M(v_{n}(0))^{1-s_{c}}E_{0}(v_{n}(0))^{s_{c}}
<M(Q)^{1-s_{c}}E_{0}(Q)^{s_{c}}
$$
and
$$
\|v_{n}(0)\|_{L^{2}}^{2(1-s_{c})}\|\nabla v_{n}(0)\|_{L^{2}}^{2s_{c}}
<\|Q\|_{L^{2}}^{2(1-s_{c})}\|\nabla Q\|_{L^{2}}^{2s_{c}}.
$$
Hence, it follows from Remark \ref{rem5.6} that \eqref{5.65} is true.

~Claim 2. There exists a large constant $A_{1}$ and $M_{1}$ independent of $M$ such
that there exists $n_1=n_1(M)$ such that for $n\geq n_1$,
\begin{align}\label{5.77}
\|v_n\|_{S(\dot{H}^{s_{c}})}\leq A_{1},\;\|\langle\nabla\rangle v_{n}\|_{S(L^{2})}\leq M_{1}.
\end{align}
In fact, we note that
\begin{align}\label{5.78}
i\partial_t v_n+\Delta v_n+|x|^{-b}|v_n|^{2}v_n=e_n,
\end{align}
where
\begin{align}\label{5.79}
e_n=|x|^{-b}\Big(|\sum_{j=1}^{M}v^j(t-t_{n}^{j})|^{2}\sum_{j=1}^{M}v^j(t-t_{n}^{j})-\sum_{j=1}^{M}|v^j(t-t_{n}^{j})|^{2}v^j(t-t_{n}^{j})\Big).
\end{align}
It is clear that
\begin{align}\label{5.80}
|e_n|\leq c\sum_{k\neq j}^{M}|x|^{-b}|v^j(t-t_{n}^{j})|
|v^k(t-t_{n}^{k})|^{2}.
\end{align}
Since, for $j\neq k$, $|t_{n}^{j}-
t_{n}^{k}|\rightarrow+\infty$, it follows from \eqref{2.7} and the dominated convergence theorem
that
\begin{align}\label{5.81}
\|e_n\|_{S'(\dot{H}^{-s_{c}})}\rightarrow 0\;\text{as}\; n\rightarrow\infty.
\end{align}
Next, we prove
\begin{align}\label{5.82}
\|e_n\|_{S'(L^{2})}\rightarrow 0\;\text{as}\; n\rightarrow\infty.
\end{align}
Indeed, using \eqref{5.80} again, we estimate
\begin{align}\label{5.83}
\|e_n\|_{S'(L^{2})}\leq c\sum_{k\neq j}^{M}\big\||x|^{-b}|v^j(t-t_{n}^{j})|
|v^k(t-t_{n}^{k})|^{2}\big\|_{L_{t}^{2}L_{x}^{\frac{6}{5}}}.
\end{align}
Using \eqref{5.9} and \eqref{2.6} and the dominated convergence theorem yields \eqref{5.82}.

Finally, we prove
\begin{align}\label{5.84}
\|\nabla e_n\|_{S'(L^{2})}\rightarrow 0\;\text{as}\; n\rightarrow\infty.
\end{align}
Note that
\begin{align}\label{5.85}
\nabla e_{n}=\nabla(|x|^{-b})\Big(f(v_{n})-\sum_{j=1}^{M}f(v_{j}(t-t_{n}^{j}))\Big)+|x|^{-b}\nabla\Big(f(v_{n})-\sum_{j=1}^{M}f(v_{j}(t-t_{n}^{j}))\Big)\doteq I_{1}+I_{2},
\end{align}
where $f(v)=|v|^{2}v$. For $I_{1}$.
\begin{align}\label{5.86}
\|I_{1}\|_{S'(L^{2})}\lesssim \sum_{k\neq j}^{M}\big\||x|^{-b-1}|v^j(t-t_{n}^{j})|
|v^k(t-t_{n}^{k})|^{2}\big\|_{L_{t}^{2}L_{x}^{\frac{6}{5}}}.
\end{align}
It follows from \eqref{2.13} that $\||x|^{-b-1}|v^j(t-t_{n}^{j})|
|v^k(t-t_{n}^{k})|^{2}\|_{L_{t}^{2}L_{x}^{\frac{6}{5}}}<+\infty$, and then similarly as before, we have
\begin{align}\label{5.87}
\|I_{1}\|_{S'(L^{2})}\rightarrow 0\;\text{as}\; n\rightarrow\infty.
\end{align}
For $I_{2}$, note that
\begin{align}\label{5.88}
\|I_{2}\|_{S'(L^{2})}\lesssim\sum_{k\neq j}^{M}\Big\||x|^{-b}|v^k(t-t_{n}^{j})|(|v^j(t-t_{n}^{j})|+|v^k(t-t_{n}^{j})|)
|\nabla v^j(t-t_{n}^{k})|^{2}\Big\|_{L_{t}^{2}L_{x}^{\frac{6}{5}}}.
\end{align}
From the proof of \eqref{2.16} and similarly as before, we deduce that
\begin{align}\label{5.89}
\|I_{2}\|_{S'(L^{2})}\rightarrow 0\;\text{as}\; n\rightarrow\infty.
\end{align}
Putting \eqref{5.87} and \eqref{5.89} together gives \eqref{5.84}.

Applying \eqref{5.81}, \eqref{5.82} and \eqref{5.84} with \eqref{5.65} to Lemma \ref{lem2.9} with $V=0$, gives \eqref{5.77}.

~Claim 3. There exists a large constant $A_{2}$ independent of $M$ such
that there exists $n_2=n_2(M)$ such that for $n\geq n_2$,
\begin{align}\label{5.90}
\|\tilde{u}_n\|_{S(\dot{H}^{s_{c}})}\leq A_{2}.
\end{align}
To see this, we note that
\begin{align}\label{5.91}
i\partial_t v_n-H_{r_{n}} v_n+|x|^{-b}|v_n|^{2}v_n={\tilde e}_n,
\end{align}
where
\begin{align}\label{5.92}
{\tilde e}_n=e_{n}-V_{r_{n}}v_{n}.
\end{align}
We will use the perturbation theory to get \eqref{5.90}. To this end, we should control the following four terms, that is,
\begin{align}\label{5.93}
\|\tilde{u}_{n, 0}-v_{n}(0)\|_{H^{1}},\;\;\|e^{-itH_{r_{n}}}(\tilde{u}_{n, 0}-v_{n}(0))\|_{S(\dot{H}^{s_{c}})},
\end{align}
\begin{align}\label{5.94}
\|{\tilde e}_n\|_{S'(\dot{H}^{-s_{c}})},\;\;
\|\langle\nabla\rangle e_{n}\|_{S(L^{2})}.
\end{align}

From \eqref{5.57} and the definition of $v_{n}(t)$, we have
\begin{align}\label{5.95}
\tilde{u}_{n, 0}-v_{n}(0)=W_{n}^{M}+\sum_{j=1}^{M}(e^{it_{n}^{j}H_{r_{n}}}\psi^{j}-v^{j}(-t_{n}^{j})).
\end{align}
As $\|\tilde{u}_{n, 0}\|_{H^{1}}$ is uniformly bounded,
\begin{align}\label{5.96}
\|W_{n}^{M}\|_{H^{1}}\;\text{ is uniformly bounded too.}
\end{align}
From the triangle inequality, \eqref{5.22}, \eqref{5.27} and \eqref{5.63}, it follows that
\begin{align}\label{5.97}
\|e^{it_{n}^{j}H_{r_{n}}}\psi^{j}-v^{j}(-t_{n}^{j})\|_{H^{1}}\rightarrow 0\;\text{as}\; n\rightarrow 0,
\end{align}
which combined with \eqref{5.96} implies that
\begin{align}\label{5.98}
\|\tilde{u}_{n, 0}-v_{n}(0)\|_{H^{1}}\;\text{ is uniformly bounded.}
\end{align}

 Let $\epsilon_0=\epsilon_0(A_{2},n)$ be
a small number given in Lemma \ref{lem2.9}. By \eqref{5.10}, takeing $M$ large enough such that
there exists $n_{3}=n_{3}(M)$ satisfying
\begin{align}\label{5.99}
\|e^{-itH_{r_{n}}}W_{n}^{M}\|_{S(\dot{H}^{s_{c}})}< \frac{\epsilon_{0}}{2}
\end{align}
for all $n\geq n_{3}$. Next we turn to the estimate of
\begin{align}\label{5.100}
\|e^{-itH_{r_{n}}}(e^{it_{n}^{j}H_{r_{n}}}\psi^{j}-v^{j}(-t_{n}^{j}))\|_{S(\dot{H}^{s_{c}})}
\end{align}
for each $j$.
From Strichartz estimates and \eqref{5.97}, it follows that
there exists $n_{4}=n_{4}(M)$ such that for each $j$ and $n\geq n_{4}$
\begin{align}\label{5.101}
\|e^{itH_{r_{n}}}(e^{it_{n}^{j}H_{r_{n}}}\psi^{j}-v^{j}(-t_{n}^{j}))\|_{S(\dot{H}^{s_{c}})}< \frac{\epsilon_{0}}{2M}.
\end{align}
~From \eqref{5.99} and \eqref{5.101}, it follows that
\begin{align}\label{5.102}
\|e^{itH_{r_{n}}}(\tilde{u}_{n, 0}-v_{n}(0))\|_{S(\dot{H}^{s_{c}})}<\epsilon_{0}
\end{align}
for all $n\geq\max\{ n_{3}, n_{4}\}$.

Similar to the proof of \eqref{5.18}, \eqref{5.22} and \eqref{5.27}, by using \eqref{5.77}, we have that both
$\|V_{r_{n}}v_{n}\|_{S'(\dot{H}^{-s_{c}})}$ and $\|\langle\nabla\rangle (V_{r_{n}}v_{n})\|_{S(L^{2})}$ go to zero as $n\rightarrow\infty$,
which together with $\lim_{n\rightarrow\infty}\|e_n\|_{S'(\dot{H}^{-s_{c}})}=0$ and $\lim_{n\rightarrow\infty}\|\langle\nabla\rangle e_n\|_{S'(L^{2})}=0$ gives
\begin{align}\label{5.103}
\lim_{n\rightarrow\infty}\|\tilde{e}_n\|_{S'(\dot{H}^{-s_{c}})}=\lim_{n\rightarrow\infty}\|\langle\nabla\rangle \tilde{e}_n\|_{S'(L^{2})}=0.
\end{align}
Applying Lemma \ref{lem2.9} with \eqref{5.98}, \eqref{5.102}, \eqref{5.103} and \eqref{5.77}, we get \eqref{5.90}.

By scaling, we have
\begin{align}\label{5.104}
\|u_n\|_{S(\dot{H}^{s_{c}})}=\|\tilde{u}_n\|_{S(\dot{H}^{s_{c}})}\leq A_{2},
\end{align}
 contradicting with \eqref{5.51}. So $\|u_{n, 0}\|_{H^{1}}$ is uniformly bounded.

The next step is to extract $u_{c,0}$ from a bounded sequence $\{u_{n, 0}\}_{n=1}^{+\infty}$.
We omit the proof because it is similar to the proof of Proposition 6.4 in \cite{FG1}. Indeed, it
suffices to replace $e^{-it\Delta}$ and $\nabla$ by $e^{-itH}$ and $H^{\frac{1}{2}}$  respectively in the above proof. In addition,
we need to use the new estimates \eqref{2.5}-\eqref{2.7} (see the proof of \eqref{5.81}, \eqref{5.82} and \eqref{5.84}).
\end{proof}

Once we established Proposition \ref{pro5.5}, we can obtain the following results of precompactness and
uniform localization of the minimal blow-up solution, the proof of which is standard and we omit it
here.

\begin{proposition}\label{pro5.8}
 Let $u_c$ be
as  in Proposition \ref{pro5.5}. Then
$$
K=\{u_c(t)| ~t\in{\bf R}\}\subset H^1({\bf R}^{3})
$$
is precompact in $H^1({\bf R}^{3})$.
\end{proposition}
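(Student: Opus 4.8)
The plan is a contradiction argument of concentration--compactness type. Suppose $K$ fails to be precompact in $H^1({\bf R}^{3})$; then there is a sequence of times $\{t_n\}$ for which $\{u_c(t_n)\}_{n\ge1}$ has no subsequence converging in $H^1({\bf R}^{3})$. By Lemma \ref{lem4.2} and mass conservation, $\sup_{t}\|u_c(t)\|_{H^1}<\infty$, so $\phi_n:=u_c(t_n)$ is a uniformly bounded radial sequence in $H^1({\bf R}^{3})$; moreover $\|\phi_n\|_{L^2}=\|u_{c,0}\|_{L^2}$ is independent of $n$, so no rescaling is needed and we may apply Proposition \ref{pro5.2} with $r_n\equiv 1$. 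Passing to a subsequence,
$$\phi_n=\sum_{j=1}^{M}e^{it_n^{j}H}\psi^{j}+W_n^{M},$$
with the pairwise divergence \eqref{5.9}, the exotic--Strichartz smallness \eqref{5.10}, the mass and kinetic Pythagorean expansions \eqref{5.11}--\eqref{5.12}, and, through Lemma \ref{lem5.4}, the energy expansion \eqref{5.41}.

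The heart of the argument is to show that exactly one profile survives and the remainder vanishes strongly. From \eqref{5.11}, \eqref{5.12}, \eqref{5.41}, the conservation of $M(u_c)^{1-s_{c}}E(u_c)^{s_{c}}=\mathcal{E}_c$, and the bound $\|u_c(t)\|_{L^2}^{2(1-s_{c})}\|H^{\frac{1}{2}}u_c(t)\|_{L^2}^{2s_{c}}<\mathcal{K}$ of Theorem \ref{th4.1}(i), each linear profile carries mass--energy at most $\mathcal{E}_c+o_n(1)$ and kinetic--mass below $\mathcal{K}$, and the same holds for $W_n^{M}$; by Lemma \ref{lem4.2} each of these quantities is nonnegative (in fact positive when the piece is nontrivial). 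If two or more profiles were nontrivial, or a single profile coexisted with a non-vanishing remainder, then each nonlinear profile --- obtained by solving \eqref{1.1} with data $\psi^{j}$, using the wave operators of Proposition \ref{pro4.3} when $|t_n^{j}|\to\infty$ and $\psi^{j}$ itself when $t_n^{j}=0$ --- would have mass--energy strictly below $\mathcal{E}_c$, hence finite $S(\dot{H}^{s_{c}})$ norm by the definition \eqref{5.4} of $\mathcal{E}_c$. Feeding the superposition of these nonlinear profiles into the long-time perturbation Lemma \ref{lem2.9}, with the error $e_n$ controlled exactly as in \eqref{5.81}, \eqref{5.82}, \eqref{5.84} (here the inhomogeneity $|x|^{-b}$ forces the use of the estimates \eqref{2.5}--\eqref{2.7} in place of the classical ones), would give $\|u_c\|_{S(\dot{H}^{s_{c}})}<\infty$, contradicting Proposition \ref{pro5.5}. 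Hence there is a single profile $\psi:=\psi^1$, and then \eqref{5.11}--\eqref{5.12} force $\|W_n^1\|_{L^2}\to0$ and $\|H^{\frac{1}{2}}W_n^1\|_{L^2}\to0$, i.e. $W_n^1\to0$ in $H^1({\bf R}^{3})$, so $u_c(t_n)=e^{it_n^1H}\psi+o_{H^1}(1)$.

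It remains to rule out $|t_n^1|\to\infty$. If $|t_n^1|\to\infty$ along a subsequence, then by the dispersive decay Lemma \ref{lemFG}(iii), $\||x|^{-b}|e^{it_n^1H}\psi|^{4}\|_{L^{1}}\to0$, while $e^{itH}$ is unitary on $L^2$ and commutes with $H^{\frac12}$; therefore $E(u_c(t_n))\to\tfrac12\|H^{\frac{1}{2}}\psi\|_{L^2}^{2}$ and $M(u_c(t_n))\to\|\psi\|_{L^2}^{2}$, so that $\big(\tfrac12\|H^{\frac{1}{2}}\psi\|_{L^2}^{2}\big)^{s_{c}}\|\psi\|_{L^2}^{2(1-s_{c})}=\mathcal{E}_c<\mathcal{E}$, which is precisely the hypothesis \eqref{4.22} of Proposition \ref{pro4.3}. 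That proposition then provides a radial solution $v$ which is global, satisfies \eqref{4.24} (so $\|v\|_{S(\dot{H}^{s_{c}})}<\infty$) and scatters to $e^{it_n^1H}\psi$ in the appropriate time direction; matching $v$ to $u_c(\cdot+t_n)$ through Lemma \ref{lem2.9} gives $\|u_c\|_{S(\dot{H}^{s_{c}})}<\infty$, again contradicting Proposition \ref{pro5.5}. Hence $\{t_n^1\}$ is bounded; passing to a further subsequence $t_n^1\to t^{\ast}\in{\bf R}$, and then $u_c(t_n)=e^{it_n^1H}\psi+o_{H^1}(1)\to e^{it^{\ast}H}\psi$ in $H^1({\bf R}^{3})$ by strong continuity of the propagator, contradicting the choice of $\{t_n\}$. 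Therefore $K$ is precompact in $H^1({\bf R}^{3})$, and the uniform spatial localization of $u_c$ --- that for each $\varepsilon>0$ there is $R=R(\varepsilon)$ with $\int_{|x|>R}\big(|\nabla u_c(t)|^{2}+|u_c(t)|^{2}+|x|^{-b}|u_c(t)|^{4}\big)\wrt x<\varepsilon$ for all $t$ --- follows from precompactness by a standard covering argument.

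The step I expect to be the main obstacle is the reduction to a single profile together with the strong $H^1$ vanishing of the remainder: the linear profile decomposition controls $W_n^{M}$ only in the exotic Strichartz norm, so the strong convergence must be extracted from the orthogonality relations \eqref{5.11}--\eqref{5.12} and \eqref{5.41} in combination with the minimality of $\mathcal{E}_c$, and this is exactly the place where the nonlinear flow, the perturbation lemma and the variational gap have to be threaded together consistently --- with the additional care, specific to the present inhomogeneous setting, of replacing the classical nonlinear estimates by \eqref{2.5}--\eqref{2.7} throughout.
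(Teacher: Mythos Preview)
Your proposal is correct and is precisely the standard concentration--compactness argument that the paper has in mind: the authors state that ``the proof of which is standard and we omit it here,'' and the outline you give --- profile decomposition with $r_n\equiv1$, reduction to a single profile via minimality of $\mathcal{E}_c$ and Lemma~\ref{lem2.9}, elimination of $|t_n^1|\to\infty$ through Proposition~\ref{pro4.3}, and convergence of $u_c(t_n)$ --- is exactly the expected route, with the inhomogeneous estimates \eqref{2.5}--\eqref{2.7} replacing the classical ones as you note.
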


\begin{coro}\label{cor5.9}
 Let $u$ be a solution of \eqref{1.1} such that $K=\{u(t)|
~t\in {\bf R} \}$ is precompact in $H^1({\bf R}^{3})$. Then for each
$\epsilon>0,$ there exists $R>0$ independent of $t$ such that
\begin{align}\label{5.105}
\int_{|x|>R}|\nabla u(x,t)|^2+|u(x,t)|^2+|u(x,t)|^4+|u(x,t)|^{6}dx\leq\epsilon.
\end{align}
\end{coro}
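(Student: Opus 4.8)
The plan is to exploit the precompactness of $K$ in $H^1({\bf R}^3)$ to reduce the uniform-in-$t$ tail estimate to a tail estimate for a single function, and then to propagate smallness by a triangle-inequality argument together with the Sobolev embedding $H^1({\bf R}^3)\hookrightarrow L^p({\bf R}^3)$ for $2\le p\le 6$. First I would record that a precompact subset of a metric space is bounded, so $\sup_{t\in{\bf R}}\|u(t)\|_{H^1}=:B<\infty$, and that for any $\eta>0$ there exist finitely many times $t_1,\dots,t_N$ such that for every $t\in{\bf R}$ there is some index $k=k(t)$ with $\|u(t)-u(t_k)\|_{H^1}<\eta$. For each fixed $k$, since $u(t_k)\in H^1\subset L^2\cap L^4\cap L^6$ and $\nabla u(t_k)\in L^2$, the dominated convergence theorem yields $R_k>0$ with
$$\int_{|x|>R_k}\big(|\nabla u(t_k)|^2+|u(t_k)|^2+|u(t_k)|^4+|u(t_k)|^6\big)\,dx<\tfrac{\epsilon}{2}.$$
Set $R=\max_{1\le k\le N}R_k$, which is finite and independent of $t$.

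Next, given an arbitrary $t$, pick $k=k(t)$ as above and write $u(t)=u(t_k)+w$ with $\|w\|_{H^1}<\eta$. On the region $\{|x|>R\}$ the two quadratic tails satisfy $\|\nabla u(t)\|_{L^2(|x|>R)}^2\le 2\|\nabla u(t_k)\|_{L^2(|x|>R)}^2+2\eta^2$ and likewise $\|u(t)\|_{L^2(|x|>R)}^2\le 2\|u(t_k)\|_{L^2(|x|>R)}^2+2\eta^2$. For the quartic and sextic tails I would use the elementary inequality $\big|\,\|f\|_{L^p(\Omega)}^p-\|g\|_{L^p(\Omega)}^p\,\big|\le C_p\|f-g\|_{L^p(\Omega)}\big(\|f\|_{L^p(\Omega)}^{p-1}+\|g\|_{L^p(\Omega)}^{p-1}\big)$ with $\Omega=\{|x|>R\}$, followed by $\|h\|_{L^p(|x|>R)}\le\|h\|_{L^p({\bf R}^3)}\lesssim\|h\|_{H^1}$ for $p=4$ (by interpolation between $L^2$ and $L^6$) and $p=6$; since $\|u(t)\|_{H^1},\|u(t_k)\|_{H^1}\le B$, this bounds the contribution of $w$ by $C(B)\eta$. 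Combining these estimates,
$$\int_{|x|>R}\big(|\nabla u(t)|^2+|u(t)|^2+|u(t)|^4+|u(t)|^6\big)\,dx\le\tfrac{\epsilon}{2}+C(B)(\eta+\eta^2),$$
and choosing $\eta=\eta(\epsilon,B)$ small enough makes the right side $\le\epsilon$, with $R$ independent of $t$.

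The only mildly delicate point is the treatment of the nonlinear tails $|u|^4$ and $|u|^6$: one must ensure that closeness in $H^1$ — not merely in $L^2$ — transfers to closeness of the $L^4$ and $L^6$ tails, which is exactly where the three-dimensional embedding $H^1\hookrightarrow L^6$ (plus interpolation for $L^4$) and the uniform bound $B$ enter. Everything else is a routine finite-cover argument, so the corollary follows.
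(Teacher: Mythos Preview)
Your argument is correct; the paper itself omits the proof, calling it standard, and the finite $\eta$-net argument you give (precompact $\Rightarrow$ totally bounded, choose $R$ for the finitely many net points, then propagate smallness via Sobolev embedding $H^1\hookrightarrow L^4\cap L^6$ together with the uniform $H^1$ bound $B$) is exactly the expected approach. One cosmetic point: since the inequality $\|\nabla u(t)\|_{L^2(|x|>R)}^2\le 2\|\nabla u(t_k)\|_{L^2(|x|>R)}^2+2\eta^2$ introduces a factor of $2$ in front of the $u(t_k)$-tail, you should take each $u(t_k)$-tail below $\epsilon/4$ rather than $\epsilon/2$ to land cleanly at $\epsilon$, but this is of course immaterial.
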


\medskip

\section{Scattering result}
\setcounter{equation}{0}

In this section, we prove the following rigidity statement and finish the proof of Theorem \ref{th1.2}.

\begin{theorem}\label{th6.1}
If $V$ is radial and satisfies \eqref{1.2} and \eqref{1.3}, $x\cdot\nabla V\leq0$, $|x|\cdot|\nabla V|\in L^{\frac{3}{2}}$, and
$0<b<1$.
 Suppose that $u_0\in H^1({\bf R}^{3})$ is radial,
$M(u_{0})^{1-s_{c}}E(u_0)^{s_{c}}<\mathcal{E}$ and
$\|u_{0}\|^{1-s_{c}}_{L^{2}}\|H^{\frac{1}{2}} u_{0}\|_{L^{2}}^{s_{c}}<\mathcal{K}.$ Let $u$
be the corresponding solution of \eqref{1.1} with initial data
$u_0$. If $K_+=\{u(t):t\in[0,\infty)\}$ is precompact in $H^1({\bf R}^{3})$, then
$u_0\equiv0$. The same conclusion holds if
$K_-=\{u(t):t\in(-\infty,0]\}$ is precompact in $H^1({\bf R}^{3})$.
\end{theorem}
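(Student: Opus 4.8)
The plan is to prove this rigidity theorem by a localized virial (Morawetz) argument in the spirit of Kenig--Merle. Assume $u_0\not\equiv0$ and seek a contradiction; by time reversal it suffices to treat the case in which $K_+=\{u(t):t\in[0,\infty)\}$ is precompact in $H^1(\mathbb R^3)$. Fix a smooth radial $\phi\colon\mathbb R^3\to\mathbb R$ with $\phi(x)=|x|^2$ for $|x|\le1$, with $\phi$ and all its derivatives bounded, $\nabla^2\phi\le2\,\mathrm{Id}$, and $|\nabla\phi(x)|\le2|x|$ for all $x$; set $\phi_R(x)=R^2\phi(x/R)$ and $z_R(t)=\int_{\mathbb R^3}\phi_R(x)\,|u(t,x)|^2\,dx$. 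Then $z_R'(t)=2\,\mathrm{Im}\int\bar u\,\nabla\phi_R\cdot\nabla u\,dx$, so by Cauchy--Schwarz, mass conservation and the uniform $H^1$ bound on $K_+$ (Lemma \ref{lem4.2}),
\begin{align*}
|z_R'(t)|\le 2\|\nabla\phi_R\|_{L^\infty}\,\|u(t)\|_{L^2}\,\|\nabla u(t)\|_{L^2}\le CR\qquad\text{for all }t\ge0,
\end{align*}
with $C$ independent of $t$.

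First I would establish the second-order identity. Differentiating once more and using $iu_t=-\Delta u+Vu-|x|^{-b}|u|^2u$, the standard virial computation (made rigorous by an approximation argument based on the local theory, Lemma \ref{lem2.2}) gives $z_R''(t)=8K(u(t))+\mathcal R_R(t)$, where $K$ is the functional \eqref{K} and the remainder collects the contributions of the region $\{|x|>R\}$ where $\phi_R\ne|x|^2$:
\begin{align*}
|\mathcal R_R(t)|\le C\int_{|x|>R}\big(|\nabla u(t)|^2+R^{-2}|u(t)|^2+|x|^{-b}|u(t)|^4\big)\,dx+C\int_{|x|>R}|\nabla\phi_R|\,|\nabla V|\,|u(t)|^2\,dx.
\end{align*}
Here I use Corollary \ref{cor5.9}: since $K_+$ is precompact, for every $\varepsilon>0$ there is $R=R(\varepsilon)\ge1$, independent of $t$, with $\int_{|x|>R}\big(|\nabla u|^2+|u|^2+|u|^4+|u|^6\big)\,dx\le\varepsilon$, and $|x|^{-b}|u|^4\le|u|^4$ on $|x|\ge1$ controls the nonlinear tail. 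For the potential term, $|\nabla\phi_R(x)|\le2|x|$, Hölder and $|x|\,|\nabla V|\in L^{3/2}$ give
\begin{align*}
\int_{|x|>R}|\nabla\phi_R|\,|\nabla V|\,|u|^2\,dx\le2\big\||x|\,|\nabla V|\big\|_{L^{3/2}(|x|>R)}\,\|u(t)\|_{L^6}^2\longrightarrow0\quad\text{as }R\to\infty,
\end{align*}
uniformly in $t$. Hence $|\mathcal R_R(t)|\le C\varepsilon$ for all $t\ge0$ once $R$ is large.

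Next I would invoke coercivity of $K$ along the flow. Since $x\cdot\nabla V\le0$, definition \eqref{K} yields $8K(u(t))\ge8\|\nabla u(t)\|_{L^2}^2-2(3+b)\||x|^{-b}|u(t)|^4\|_{L^1}$, and Lemma \ref{lem4.2}(iii) bounds the right-hand side below by $8(1-\omega)\|\nabla u(t)\|_{L^2}^2\sim E(u_0)$; as $u_0\not\equiv0$, \eqref{4.12} forces $E(u_0)>0$, so there is $\delta_0>0$ with $8K(u(t))\ge\delta_0$ for all $t\ge0$. Fixing $\varepsilon$ with $C\varepsilon\le\delta_0/2$ and the associated $R$, we get $z_R''(t)\ge\delta_0/2$, hence $z_R'(t)\ge z_R'(0)+\tfrac{\delta_0}{2}t\ge-CR+\tfrac{\delta_0}{2}t$; for $t>4CR/\delta_0$ this forces $z_R'(t)>CR$, contradicting $|z_R'(t)|\le CR$. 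Therefore $u_0\equiv0$. The case of $K_-$ precompact follows by applying the above to $v(t):=u(-t)$, which solves the same equation.

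The step I expect to be the main obstacle is making the error $\mathcal R_R(t)$ small \emph{uniformly in $t$} rather than merely for fixed time: this is exactly where precompactness of the trajectory is indispensable, entering through the uniform spatial localization of Corollary \ref{cor5.9}. The two structural hypotheses on $V$ play complementary roles: $x\cdot\nabla V\le0$ gives the potential contribution to $8K$ a favorable sign on the bulk so that Lemma \ref{lem4.2}(iii) can be applied, while $|x|\,|\nabla V|\in L^{3/2}$ makes the potential part of the tail of the virial identity negligible.
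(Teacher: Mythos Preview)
Your argument is correct and follows essentially the same route as the paper's proof: a localized virial identity with cutoff $\phi_R(x)=R^2\phi(x/R)$, coercivity of the main term via Lemma~\ref{lem4.2}(iii) after dropping the favorable potential contribution $-4\int x\cdot\nabla V\,|u|^2\ge0$, and uniform smallness of the tail remainder through Corollary~\ref{cor5.9} together with $|x|\,|\nabla V|\in L^{3/2}$. The only cosmetic differences are that the paper works directly with $M_a(t)=z_R'(t)$ and its derivative rather than with $z_R$ itself, and that the paper controls the potential tail by $\||x|\,|\nabla V|\|_{L^{3/2}}\|u\|_{L^6(|x|\ge R)}^2$ (smallness coming from precompactness of $u$) whereas you use $\||x|\,|\nabla V|\|_{L^{3/2}(|x|>R)}\|u\|_{L^6}^2$ (smallness from the tail of $|x|\,|\nabla V|$); both are valid.
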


\begin{proof}
By Theorem \ref{th1.1}, we have that $u$ is global in $H^{1}({\bf R}^{3})$ and
\begin{align}\label{6.1}
\|u(t)\|^{1-s_{c}}_{L^{2}}\|H^{\frac{1}{2}} u(t)\|_{L^{2}}^{s_{c}}<\|Q\|^{1-s_{c}}\|\nabla Q\|_{L^{2}}^{s_{c}}
\end{align}

 We first define
\begin{align}\label{6.2}
M_{a}(t)=2\displaystyle\int_{{\bf R}^{3}}\partial_{j}a \Im(\bar{u}\partial_{j}u)dx,
\end{align}
where $a\in C_{c}^{\infty}({\bf R}^{3})$ and we always take summation with respect to repeated indices. 
Following the computation of Lemma 5.3 in Tao, Visan and Zhang \cite{TVZ1} (see also Lemma 4.1 in \cite{Dinh1} ) yields
\begin{equation}\label{6.3}
\begin{split}
M_{a}'(t)
&=2\displaystyle\int_{{\bf R}^{3}}\Big(2\partial_{jk} a \Re(\partial_{j}\bar{u}\partial_{k}u)
-\frac{1}{2}\Delta^{2}a|u|^{2}\Big)dx\\
&\quad-\displaystyle\int_{{\bf R}^{3}}\Delta a |x|^{-b}|u|^{4}dx+\displaystyle\int_{{\bf R}^{3}}\nabla a\cdot\nabla(|x|^{-b})|u|^{4}dx\\
&\quad-2\displaystyle\int_{{\bf R}^{3}}\nabla a\cdot\nabla V|u|^{2}dx,
\end{split}
\end{equation}
Take a radially symmetric function $\phi\in C_{c}^{\infty}$ such that
$\phi(x)=|x|^{2}$ for $|x|\leq 1$ and $\phi(x)=0$ for $|x|\geq 2$, and define
$a(x)=R^{2}\phi(\frac{x}{R})$. 
By the repulsiveness assumption on the potential $V$ (i.e., $x\cdot \nabla V\leq 0$), direct computation gives
\begin{equation}\label{6.4}
\begin{split}
M_{a}'(t)
&=8\displaystyle\int_{{\bf R}^{3}}|\nabla u|^{2}dx
-2(3+b)\displaystyle\int_{{\bf R}^{3}}|x|^{-b}|u|^{4}dx
-4\displaystyle\int_{{\bf R}^{3}}x\cdot\nabla V|u|^{2}dx
+({\rm Eror})\\
&\geq  8\displaystyle\int_{{\bf R}^{3}}|\nabla u|^{2}dx
-2(3+b)\displaystyle\int_{{\bf R}^{3}}|x|^{-b}|u|^{4}dx
+({\rm Error}),
\end{split}
\end{equation}
where
\begin{equation}\label{6.5}
\begin{split}
{\rm (Erro)}&=4\Re\displaystyle\int_{{\bf R}^3}\Big(\partial_{j}^{2}\phi(\frac{x}{R})-2\Big)|\partial_{j}u|^{2}dx
+4\sum_{j\neq k}\Re\displaystyle\int_{{\bf R}^3}(\partial_{jk}\phi)(\frac{x}{R})\partial_{k}u\partial_{j}\bar{u}dx\\
&\quad-\frac{1}{R^{2}}\displaystyle\int_{{\bf R}^3}(\Delta^{2}\phi)(\frac{x}{R})|u|^{2}dx
+R\displaystyle\int_{{\bf R}^3}\nabla(|x|^{-b})\cdot(\nabla \phi)(\frac{x}{R})|u|^{4}dx\\
&\quad+\displaystyle\int_{{\bf R}^3}\Big(-\Big(-\Delta\phi(\frac{x}{R})-6\Big)+2b\Big)|x|^{-b}|u|^{4}dx\\
&\quad+2\displaystyle\int_{{\bf R}^3}\Big(2x\cdot\nabla V-R(\nabla\phi)(\frac{x}{R})\nabla V\Big)|u|^{2}dx.
\end{split}
\end{equation}
By the definition of $\phi(x)$ and
\begin{align}\label{6.6}
2\displaystyle\int_{|x|\leq R}\nabla(|x|^{-b})\cdot x |u|^{4}dx=2\displaystyle\int_{|x|\leq R}-b|x|^{-b}|u|^{4}dx,
\end{align}
it follows from Corollary \ref{cor5.9} that $({\rm Error})\rightarrow 0$ as $R\rightarrow\infty$ uniformly in
$t\in [0,\infty)$. In fact,
\begin{align}\label{6.7}
({\rm Error})
&\lesssim \displaystyle\int_{|x|\geq R}|\nabla u|^{2}dx
+\displaystyle\int_{|x|\geq R}|x|^{-b}|u|^{4}dx
+\frac{1}{R^{2}}\displaystyle\int_{|x|\geq R}|u|^{2}dx
+\big\||x||\nabla V|\big\|_{L^{\frac{3}{2}}}\|u\|_{L^{6}(|x|\geq R)}^{2}\nonumber\\
&+\displaystyle\int_{|x|\geq R}|\nabla u|^{2}dx
+\displaystyle\int_{|x|\geq R}\frac{1}{R^{b}}|u|^{4}dx
+\frac{1}{R^{2}}\displaystyle\int_{|x|\geq R}|u|^{2}dx
+\big\||x||\nabla V|\big\|_{L^{\frac{3}{2}}}\|u\|_{L^{6}(|x|\geq R)}^{2}
\rightarrow 0.
\end{align}

Putting \eqref{6.4}, \eqref{6.7}, \eqref{4.14}and \eqref{4.12} together and using the norm equivalence yield that
there exists some constant $\delta_{0}>0$ such that
\begin{align}\label{6.8}
M_{a}'(t)\geq \delta_0\displaystyle\int_{{\bf R}^{3}}|\nabla u_0|^2dx.
\end{align}
Thus, we have
\begin{align}\label{6.9}
M_{a}(0)-M_{a}(t)\geq \delta_0 t\displaystyle\int_{{\bf R}^{3}}|\nabla u_0|^2dx.
\end{align}
On the other hand, by the definition of $M_{a}(t)$, the norm equivalence and \eqref{6.1}, we should have
\begin{align}\label{6.10}
|M_{a}(t)|
\leq R\|u\|_{L^{2}}\|\nabla u\|_{L^{2}}
\lesssim R\|u\|_{L^{2}}\|H^{\frac{1}{2}} u\|_{L^{2}}
\leq cR,
\end{align}
which is a contradiction for $t$ large unless $u_{0}=0$.
\end{proof}
 \vskip0.5cm

Now, we can finish the proof of Theorem \ref{th1.2}.\\
{\bf The  Proof of Theorem \ref{th1.2}.}
In view of Proposition \ref{pro5.8}, Theorem \ref{th6.1}
implies that $u_c$ obtained in Proposition \ref{pro5.5} cannot
exist. Thus, there must holds that $\mathcal{E}_c
=\mathcal{E}$, which combined with
Proposition \ref{pro2.6} implies Theorem \ref{th1.2}.
$\hfill\Box$ \vskip0.5cm

\section{Blow-up criteria}
 We finally consider the blow-up in finite or infinite time following the idea from Du-Wu-Zhang \cite{DWZ}.

{\bf Proof of Theorem \ref{the2}}
Assume the contrary, then we have 
 $$ C_0=\sup_{t\in\mathbb R^+}\|\nabla u(t)\|_{L^2}<\infty.$$
Consider the local Virial identity and let
\begin{equation}\label{vf}
I(t)=\int_{{\bf R}^{3}}\phi(x)|u(t,x)|^2dx,
\end{equation}
where $\phi\in C_{c}^{\infty}({\bf R}^{3})$. Similar to \eqref{6.3}, we get
$$I'(t)=2\Im\int_{{\bf R}^{3}}\nabla\phi\cdot\nabla u\bar udx;$$
\begin{align*}
I''(t)=\int_{{\bf R}^{3}}4\Re\nabla\bar u\nabla^2\phi\nabla udx
-\int_{{\bf R}^{3}}2\nabla\phi\cdot\nabla V|u|^2+\Delta\phi |x|^{-b}|u|^4-\nabla\phi\cdot\nabla(|x|^{-b})|u|^4dx
-\int_{{\bf R}^{3}}\Delta^2\phi|u|^2dx.
\end{align*}
In particular, if $\phi$ is radial, 
\begin{align}\label{I'}
I'(t)=2\Im\int_{{\bf R}^{3}}\phi'(r)\frac{x\cdot\nabla u}r\bar udx
\end{align}
and
\begin{align*}
&I''(t)=4\int_{{\bf R}^{3}}\frac{\phi'}r|\nabla u|^2dx+4\int_{{\bf R}^{3}}\left(\frac{\phi''}{r^2}-\frac{\phi'}{r^3}\right)|x\cdot\nabla u|^2dx\\
&-2\int_{{\bf R}^{3}}\frac{\phi'}{r}x\cdot\nabla V|u|^2dx-\int_{{\bf R}^{3}}\left(\phi''(r)+\frac{2+b}r\phi'(r)\right) |x|^{-b}|u|^4dx
-\int_{{\bf R}^{3}}\Delta^2\phi|u|^2dx.
\end{align*}

{\bf $L^2$ estimate in the exterior ball}
\begin{lemma}\label{leml2}
Given $\eta_0>0$, then for any 
$$
0<t\leq\frac{\eta_0R}{4m_0 C_0},
$$
$$
\int_{|x|\geq\frac R2}|u(t,x)|^2dx\leq\eta_0+o_R(1).
$$
\end{lemma}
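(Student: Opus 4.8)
The plan is to track the localized mass $I(t)=\int\phi(x)|u(t,x)|^2\,dx$ and exploit that, under the standing bound $\bar C_0=\sup_{t}\|\nabla u(t)\|_{L^2}<\infty$, its growth rate is $O(1/R)$; this makes the mass escaping past radius $R/2$ stay below $\eta_0$ for all times up to order $R$. Concretely, I take $\phi$ to be a radial cutoff of the type fixed above, adapted so that $\phi\equiv 1$ on $\{|x|\ge R/2\}$ and $\phi\equiv 0$ near the origin, with $0\le\phi\le 1$ and $0\le\phi'\le C/R$, so that $\int_{|x|\ge R/2}|u(t,x)|^2\,dx\le I(t)$.

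First I would invoke the first identity of Lemma \ref{lemvi}, namely $I'(t)=2\,\mathrm{Im}\int\nabla\phi\cdot\nabla u\,\bar u\,dx$. By Cauchy--Schwarz and the pointwise bound on $\nabla\phi$,
$$|I'(t)|\le 2\|\nabla\phi\|_{L^\infty}\|\nabla u(t)\|_{L^2}\|u(t)\|_{L^2}\le \frac{C}{R}\,\|\nabla u(t)\|_{L^2}\|u(t)\|_{L^2}.$$
The two inputs that make this uniform in time are mass conservation \eqref{1.8}, which freezes $\|u(t)\|_{L^2}=m_0$, and the contradiction hypothesis $\|\nabla u(t)\|_{L^2}\le\bar C_0$. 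Hence $|I'(t)|\le 4m_0\bar C_0/R$ after absorbing constants into the derivative bound on $\phi$.

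Next I integrate from $0$ to $t$ and use $0\le\phi\le 1$ supported in an exterior region to obtain
$$I(t)\le I(0)+\frac{4m_0\bar C_0}{R}\,t\le\int_{|x|\ge R/2}|u_0|^2\,dx+\frac{4m_0\bar C_0}{R}\,t.$$
The initial term is $o_R(1)$ as $R\to\infty$, since it is the tail of the integrable function $|u_0|^2$ (dominated convergence), and this smallness is uniform in $t$ because it depends only on the data. Restricting to $t\le \eta_0 R/(4m_0\bar C_0)$ makes the second term at most $\eta_0$. Combining with $\int_{|x|\ge R/2}|u(t,x)|^2\,dx\le I(t)$ yields the claimed bound $\eta_0+o_R(1)$.

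There is no deep obstacle here: this is the standard finite–propagation-speed heuristic for the local mass, and the care needed is purely bookkeeping — keeping the $L^2$ norm frozen via mass conservation, ensuring the $o_R(1)$ error is uniform in $t$ since it comes solely from $u_0$, and tracking the support and derivative of $\phi$ so that the growth slope is genuinely $O(1/R)$. The only structural input is the \emph{a priori} kinetic-energy bound $\bar C_0$; it is precisely this that converts the $O(1/R)$ slope into a time window $t\lesssim R$ on which the escaped mass is controlled.
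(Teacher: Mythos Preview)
Your proposal is correct and follows essentially the same approach as the paper: localize the mass with a radial cutoff, bound $|I'(t)|\le \tfrac{4}{R}\,m_0\bar C_0$ via Cauchy--Schwarz, mass conservation, and the standing kinetic bound $\bar C_0$, then integrate in time. One minor bookkeeping slip: you cannot simultaneously have $\phi\equiv 1$ on $\{|x|\ge R/2\}$ and $I(0)\le\int_{|x|\ge R/2}|u_0|^2\,dx$, since a smooth transition with $\phi'=O(1/R)$ forces $\operatorname{supp}\phi$ to reach strictly inside $\{|x|<R/2\}$; the paper avoids this by taking $\phi=0$ on $\{r\le R/2\}$ and $\phi=1$ on $\{r\ge R\}$, which yields $I(0)\le\int_{|x|\ge R/2}|u_0|^2$ and $\int_{|x|\ge R}|u(t)|^2\le I(t)$ (the $R/2$ in the displayed conclusion is in fact a typo for $R$, and it is the bound on $\{|x|\ge R\}$ that is used downstream). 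Either way your key claim $I(0)=o_R(1)$ is valid, so the argument goes through unchanged.
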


\begin{proof}
Take the radial function $\phi$ in \eqref{vf} such  that
$$
\phi=\begin{cases}0,&0\leq r\leq\frac R2;\\1,&r\geq R,\end{cases}
$$
and  
$$
0\leq\phi\leq1,\ \ 0\leq\phi'\leq\frac4R,
$$
where $R>0$ is a large constant and will be chosen later.
By \eqref{I'}, there holds that
\begin{align*}
I(t)=&I(0)+\int_0^tI'(\tau)d\tau
\leq I(0)+t\|\phi'\|_{L^\infty}\sup_{s\in[0,t]}(\|u(s)\|_{L^2}\|\nabla u(s)\|_{L^2})\\
\leq&\int_{|x|\geq\frac R2}|u_0|^2dx+\frac{4m_0 C_0t}R=o_R(1)+\frac{4m_0 C_0t}R
\end{align*}
where $m_0=\|u_0\|_{L^2}$ and $o_R(1)$ tends to 0 as $R\rightarrow+\infty$. 
By the definition of $\phi$, 
$$
\int_{|x|\geq  R}|u(t,x)|^2dx\leq I(t).
$$
To sum up, we complete the proof of the lemma.
\end{proof}

{\bf Localized Virial identity}

At this stage, we choose $\phi$ such that
$$0\leq\phi\leq r^2,\ \ \phi''\leq2,\ \ \phi^{(4)}\leq\frac4{R^2},$$
and
$$\phi=\begin{cases}r^2,&0\leq r\leq R;\\0,&r\geq 2R\end{cases}$$
to get the following result.
\begin{lemma}\label{lemI''}
There exist two constant $\tilde C=\tilde C(m_0, C_0)>0$, $\theta_0>0$, such that
$$I''(t)\leq8K(u(t))+\tilde C\|u\|_{L^2(|x|>R)}^{\theta_0}.$$
\end{lemma}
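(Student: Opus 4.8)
The plan is to prove Lemma \ref{lemI''} by bounding each of the four error terms $R_1, R_2, R_3, R_4$ arising from the localization of the virial identity, using the explicit choice of $\phi$ together with the uniform bounds $\|u\|_{L^2}=m_0$ and $\sup_t\|\nabla u\|_{L^2}=\bar C_0$. First I would record the pointwise properties of the chosen cutoff: since $\phi(r)=r^2$ for $0\le r\le R$, all four error integrands vanish identically on the ball $\{|x|\le R\}$, so every $R_j$ reduces to an integral over the exterior region $\{|x|>R\}$. Indeed, on $\{|x|\le R\}$ one has $\phi'/r=2$, $\phi''=2$, $\phi''/r^2-\phi'/r^3=0$, $\Delta\phi=6$, and $\Delta^2\phi=0$, which makes the integrand of $R_1$, the combination $\phi''+\tfrac{2-b}{r}\phi'-(6-2b)$ in $R_2$, the factor $\phi'/r-2$ in $R_3$, and $\Delta^2\phi$ in $R_4$ all vanish. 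This localization to $\{|x|>R\}$ is the structural point that lets every remainder be controlled by exterior norms of $u$.

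Next I would estimate the four exterior integrals. For $R_1$, the properties $\phi''\le 2$ and $|\phi'/r|\lesssim 1$ (together with $|x\cdot\nabla u|\le r|\nabla u|$) give $|R_1|\lesssim \int_{|x|>R}|\nabla u|^2\,dx\le \bar C_0^2$, which is bounded but not yet small; to extract the claimed power of $\|u\|_{L^2(|x|>R)}$ one uses the radial Strauss-type decay to trade gradient control for an $L^2$-weight, or more directly absorbs the genuinely sign-definite gradient part into the $8K(u)$ term and keeps only the localization defect, which is $O(\|u\|_{L^2(|x|>R)}^{\theta_0})$ after interpolation. For $R_2$, I would apply Lemma \ref{lemFG}(i) on the exterior region to bound $\int_{|x|>R}|x|^{-b}|u|^4\,dx$ by $\|u\|_{L^4(|x|>R)}^4+\|u\|_{L^r(|x|>R)}^r$, then interpolate between $L^2(|x|>R)$ and $H^1$ (using the global $H^1$ bound $m_0+\bar C_0$) to produce a factor $\|u\|_{L^2(|x|>R)}^{\theta_0}$ with some $\theta_0>0$. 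The term $R_3$ uses the hypothesis $x\cdot\nabla V\le 0$ from \eqref{Ve1}, so that $-2(\phi'/r-2)(x\cdot\nabla V)$ has a favorable sign on part of the range; the remaining piece is bounded via $x\cdot\nabla V\in L^\delta$ with $\delta>\tfrac32$ and Hölder/Sobolev, again against the exterior $L^2$-mass. For $R_4$, the bound $\phi^{(4)}\le 4/R^2$ (and the analogous bound on $\Delta^2\phi$) yields $|R_4|\lesssim R^{-2}\int_{|x|>R}|u|^2\,dx\le R^{-2}m_0^2$, which is harmless.

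The main obstacle is $R_2$: extracting a positive power $\theta_0$ of the small exterior quantity $\|u\|_{L^2(|x|>R)}$ from the quartic nonlinear term $\int_{|x|>R}|x|^{-b}|u|^4$ while keeping the constant dependent only on $m_0$ and $\bar C_0$. The delicate point is that the weight $|x|^{-b}$ must be handled on the exterior region, where it is bounded by $R^{-b}$, but one still needs the interpolation inequality $\|u\|_{L^4}^4\lesssim \|u\|_{L^2}^{4\theta}\|\nabla u\|_{L^2}^{4(1-\theta)}$ localized to $\{|x|>R\}$, which requires either a radial Gagliardo–Nirenberg estimate or a cutoff commutator argument; I would carry this out using Lemma \ref{lemFG}(i) together with the interpolation between $L^r$, $L^4$ and $L^2$ restricted to the exterior, so that the whole nonlinear remainder is bounded by $\tilde C(m_0,\bar C_0)\|u\|_{L^2(|x|>R)}^{\theta_0}$. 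Collecting $|R_1|+|R_2|+|R_3|+|R_4|\le \tilde C\|u\|_{L^2(|x|>R)}^{\theta_0}$ and substituting into $I''(t)=8K(u)+R_1+R_2+R_3+R_4$ then yields the stated inequality, with $\theta_0>0$ determined by the worst interpolation exponent among the four terms.
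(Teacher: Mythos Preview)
Your treatment of $R_2$ and $R_4$ is essentially the paper's, but your handling of $R_1$ and $R_3$ has a genuine gap. You try to bound $|R_1|$ by $\int_{|x|>R}|\nabla u|^2$, observe this is merely bounded by $\bar C_0^2$, and then gesture at ``radial Strauss-type decay'' or at ``absorbing the sign-definite gradient part into $8K(u)$''. Neither of these vague moves can produce a factor $\|u\|_{L^2(|x|>R)}^{\theta_0}$: there is no control of the exterior kinetic energy by the exterior mass under the sole hypothesis $\sup_t\|\nabla u\|_{L^2}=\bar C_0$, and Theorem~\ref{the2} does not assume $u$ is radial, so Strauss decay is unavailable. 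The point you miss is that the cutoff is chosen so that $R_1\le 0$ and $R_3\le 0$ hold \emph{pointwise as sign inequalities}, not merely as small exterior remainders.

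Concretely, from $\phi''\le 2$ and $\phi'(0)=0$ one has $\phi'(r)\le 2r$, hence $\phi'/r-2\le 0$ everywhere. Split $\mathbb R^3$ according to the sign of $\phi''/r^2-\phi'/r^3$. Where this is $\le 0$, both integrands in $R_1$ are nonpositive. Where it is $>0$, use $|x\cdot\nabla u|^2\le r^2|\nabla u|^2$ to get
\[
R_1\le 4\int\Big(\tfrac{\phi'}{r}-2+\phi''-\tfrac{\phi'}{r}\Big)|\nabla u|^2\,dx
=4\int(\phi''-2)|\nabla u|^2\,dx\le 0.
\]
For $R_3$, the same bound $\phi'/r\le 2$ together with $x\cdot\nabla V\le 0$ from \eqref{Ve1} gives $(\phi'/r-2)(x\cdot\nabla V)\ge 0$ on all of $\mathbb R^3$, so $R_3\le 0$ outright; there is no ``remaining piece'' requiring $x\cdot\nabla V\in L^\delta$. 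With $R_1\le 0$ and $R_3\le 0$ discarded, only $R_2$ and $R_4$ contribute to the error, and your estimates for those (giving $\theta_0=1-b$ from $R_2$) finish the proof.
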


\begin{proof}
By the definition of $K(u)$ \eqref{K},
$$
I''(t)=8K(u)+R_1+R_2+R_3+R_4,
$$
where
$$R_1=4\int_{{\bf R}^{3}}\left(\frac{\phi'}r-2\right)|\nabla u|^2dx+4\int_{{\bf R}^{3}}
\left(\frac{\phi''}{r^2}-\frac{\phi'}{r^3}\right)|x\cdot\nabla u|^2dx,$$
$$R_2=-\int_{{\bf R}^{3}}\left(\phi''+\frac{2-b}r\phi'(r)-(6-2b)\right)|x|^{-b}| u|^4dx,$$
$$R_3=-2\int_{{\bf R}^{3}}\left(\frac{\phi'}{r}-2\right)(x\cdot\nabla V)|u|^2dx,$$
$$R_4=-\int_{{\bf R}^{3}}\Delta^2\phi|u|^2dx.$$
We claim that $R_1$ and $R_{3}$ are non-positive and $R_2$ and $R_4$ are the error terms.

Indeed, for $R_{1}$, 
we divide ${\bf R}^3$ of the second term into two parts: 
$$
A=\left\{x\in {\bf R}^3: \frac{\phi''}{r^2}-\frac{\phi'}{r^3}\leq0\right\}\;\;\text{and}\;\;
B=\left\{x\in {\bf R}^3: \frac{\phi''}{r^2}-\frac{\phi'}{r^3}>0\right\}.
$$
Correspondingly, $R_{1}=R_{1}^{A}+R_{1}^{B}$.
For $R_{1}^{A}$, 
it is trivial that $R_{1}^{A}\leq 0$ since $\phi'\leq2r$.
For $R_{1}^{B}$, since $\phi''\leq2$, it holds that
$$R_1^{B}\leq 4\int_{{\bf R}^{3}}\left(\frac{\phi'}r-2\right)|\nabla u|^2dx+4\int_{{\bf R}^{3}}
\left(2-\frac{\phi'}r\right)|\nabla u|^2dx=0.$$
Hence, $R_{1}\leq 0$.
For $R_{2}$, since 
$$
{\rm supp}\Big\{r\in [0,+\infty):\phi''+\frac{2-b}r\phi'(r)-(6-2b)\Big\}\subset[R,\infty),
$$ 
by interpolation and Sobolev embedding, we have
$$
R_2\lesssim R^{-b}\|u\|_{L^{4}(|x|>R)}^{4}\lesssim R^{-b}\|u\|_{L^6(|x|>R)}^{3}\|u\|_{L^2(|x|>R)}
\lesssim R^{-b}\|\nabla u\|_{L^{2}}^{3}\|u\|_{L^2(|x|>R)}\lesssim C_{0}^{3}R^{-b}\|u\|_{L^2(|x|>R)}.
$$
For $R_{3}$, by the assumption $x\cdot\nabla V\leq0$ \eqref{Ve1},  we obtain $R_3\leq0$.
Finally, for $R_{4}$, by the properties of $\phi$, $R_4\lesssim R^{-2}\|u\|_{L^2(|x|>R)}^2$.

Putting all the above estimates together, there holds that for $R>1$,
$$I''(t)\leq8K(u)+\tilde C\|u\|_{L^2(|x|>R)},$$
where $\tilde C>0$ depending on $m_0$ and $C_0$. Thus, we complete the proof of the lemma.
\end{proof}

Applying Lemma \ref{leml2} and \ref{lemI''}, we have that for any
$t\leq T:=\eta_0R/(4m_0 C_0)$,
$$I''(t)\leq8K(u)+\tilde C(\eta_0^{1/2}+o_R(1)).$$
Integrating over the interval $[0, T]$ and using the assumption \eqref{Kbeta} gives that
\begin{align*}
I(T)&\leq I(0)+I'(0)T+\int_0^T\int_0^t(8K(u(s))+\tilde C\eta_0^{1/2}+o_R(1))\\
&\leq I(0)+I'(0)T+(8\beta_0+\tilde C\eta_0^{1/2}+o_R(1))\frac{T^2}2.
\end{align*}
If $\eta_0=\beta_{0}^{2}\tilde{c}^{-2}$ 
and $o_{R}(1)<-\beta_{0}$ (by taking $R$ large enough), then for $T=\eta_0R/(4m_0 C_0)$, we obtain that
\begin{align}\label{IT}
I(T)\leq I(0)+I'(0)\eta_0R/(4m_0 C_0)+\alpha_0R^2,
\end{align}
where the constant $$\alpha_0=\beta_0\eta_0^2/(4m_0 C_0)^2<0$$ is independent of $R$.

Note that
\begin{align}\label{I0}
I(0)=o_R(1)R^2,\ \ \ I'(0)=o_R(1)R.
\end{align}
In fact,
\begin{align*}
I(0)&\leq\int_{|x|<\sqrt{R}}|x|^2|u_0|^2dx+\int_{\sqrt{R}<|x|<2R}|x|^2|u_0|^2dx\\
&\leq Rm_0^2+4R^2\int_{|x|>\sqrt{R}}|u_0|^2dx=o_R(1)R^2.
\end{align*}
The argument gives the second estimate.

Substituting \eqref{I0} in \eqref{IT} and taking $R$ large enough, we have that
$$
I(T)\leq o_R(1)R^2+\alpha_0R^2\leq\frac12\alpha_0R^2<0,
$$
which contradicts with the definition of $I$.
Thus, we complete the proof of Theorem \ref{the2}.

$\hfill\Box$

We finally finish the proof of Theorem \ref{the1}.

{\bf Proof of Theorem \ref{the1}}
Firstly, we need to show that
the assumption \eqref{4.1} and \eqref{4.4} implies \eqref{Kbeta}. Indeed,
first by Theorem \ref{th4.1}, we know that \eqref{4.5} holds for any $t\in[0,T_{max})$.
Then by the definition of $K(u)$ \eqref{K}, we get
\begin{align}\label{K1}
K(u)=(3+b)E(u)-\frac{1+b}2\|H^{\frac12}u\|_{L^2}^2-\frac12\int_{{\bf R}^{3}}(2V+x\cdot\nabla V)|u|^2dx.
\end{align}
Then by \eqref{4.5} and the assumption $2V+x\cdot\nabla V\geq0$ in \eqref{Ve1}, one obtains that
$$K(u(t)<0,\ \ for\ any\ t\in[0,T_{max}).$$
Now we claim that there exists some $\delta_0>0$ such that for any $t\in[0,T_{max})$,
\begin{align}\label{K2}
K(u)<-\delta_0\|H^{\frac12}u\|_{L^2}^2.
\end{align}
Indeed, if on the contrary, there exists some time sequence $\{t_n\}\subset[0,T_{max})$ such that
$$-\delta_n\frac{1+b}2\|H^{\frac12}u\|_{L^2}^2<K(u(t_n))<0,$$
where $\delta_n\rightarrow0$ as $n\rightarrow\infty$. Then by \eqref{K1},
$$E(u(t_n))\geq\frac1{3+b}\left(K(u(t_n))+\frac{1+b}2\|H^{\frac12}u\|_{L^2}^2
\right)>(1-\delta_n)\frac{1+b}{2(3+b)}\|H^{\frac12}u\|_{L^2}^2.$$
Therefore, we obtain
\begin{align*}
&M(u(t_n))^{1-s_c}E(u(t_n))^{s_c}\\
&>M(u(t_n))^{1-s_c}(1-\delta_n)^{s_c}\left(\frac{1+b}{2(3+b)}\right)^{s_c}
\|H^{\frac12}u\|_{L^2}^{2s_c}\\
&>(1-\delta_n)^{s_c}\left(\frac{1+b}{2(3+b)}\right)^{s_c}\mathcal K=(1-\delta_n)^{s_c}\mathcal E,
\end{align*}
 contradicting \eqref{4.1} and \eqref{K2} holds.
 Finally, since by \eqref{4.5}, the Kinetic $\|H^{\frac12}u\|_{L^2}^2>\epsilon_0$ with some
 positive constant $\epsilon_0>0$, then we immediately obtain \eqref{Kbeta} and Theorem \ref{the1} is proved by Theorem \ref{the2}.
 $\hfill\Box$

\end{document}